\newtheorem{fact}{Fact}[section]
\newtheorem{formula}[fact]{Formula}
\newtheorem{theorem}[fact]{Theorem}
\newtheorem{proposition}[fact]{Proposition}
\newtheorem{lemma}[fact]{Lemma}
\newtheorem{corollary}[fact]{Corollary}
\newtheorem{remark}[fact]{Remark}
\newtheorem{definition}[fact]{Definition}
\newtheorem{example}[fact]{Example}
\def\Sym{{\rm Sym}}
\def\GGG{{\mathcal G}}
\def\QQ{{\mathbb Q}}
\def\OO{{\mathcal O}}
\def\ZZ{{\mathbb Z}}
\def\CC{{\mathbb C}}
\def\PP{{\mathbb P}}
\def\AA{{\mathbb A}}
\def\NN{\mathbb N}
\def\Hom{{\rm Hom}}
\def\End{{\rm End}}
\def\T{{\bf T}}
\def\t{{\mathfrak t}}
\def\gr{Gro\-then\-dieck }
\def\EE{{\mathcal E}}
\def\vt{\vartheta}
\def\f{f}
\def\om{\omega}
\def\omm{{\underline \om}}
\def\TFn{T\!\F(n)}
\newcommand{\oomega}{{\boldsymbol \omega}}
\def\Ellh{\mathcal E\mkern-3.7mu\ell\mkern-2mu\ell}
\def\ELL{\mathcal{E\!L\!L}}
\def\ELLt{\widehat{\ELL}}
\def\Ellt{\widehat{\Ellh}}
\def\Ee{{E^{\rm ell}}}
\def\Ek{{E^{\rm K}}}
\def\Eh{E^{\rm coh}}
\def\Em{\mathfrak E} 
\def\eell{{e^{\rm ell}}}
\def\eekk{{e^{\rm K}}}
\def\eehh{e^{\rm coh}}
\def\vme{{\!\scriptscriptstyle\vee}}
\def\eee{{\rm e}}
\def\delt{\delta\!\big(}
\DeclareMathOperator\id{id}
\DeclareMathOperator\res{res}
\DeclareMathOperator\rank{rank}
\DeclareMathOperator\RR{R}
\DeclareMathOperator\TTT{\mathcal T}
\DeclareMathOperator\HH{\mathbb H}
\DeclareMathOperator\Fl{\mathcal F}
\DeclareMathOperator\C{\mathbb C}
\DeclareMathOperator{\GL}{GL}
\DeclareMathOperator\F{\mathcal F}
\DeclareMathOperator\Z{\mathbb Z}
\DeclareMathOperator\characters{characters}
\DeclareMathOperator\ww{\mathbf w}
\DeclareMathOperator\wwh{\widehat{\mathbf{w}}}
\DeclareMathOperator\hC{\mathfrak{C}}
\newcommand{\bb}[1]{\overline{#1}}
\newcommand{\LL}[1]{\mathcal{L}_{#1}}
\title{Elliptic classes of Schubert varieties via Bott-Samelson resolution}
\author{Rich\'ard Rim\'anyi}
\address{Department of Mathematics, University of North Carolina at Chapel Hill, USA}
\email{rimanyi@email.unc.edu}
\author{Andrzej Weber}
\address{Institute of Mathematics, University of Warsaw, Poland}
\email{aweber@mimuw.edu.pl}
\thanks{R.R. is supported by Simons Foundation Grant 523882. A.W. is supported by NCN grant 2013/\-08/\-A/\-ST1/\-00804 and 2016/23/G/ST1/04282 (Beethoven 2)}
\begin{document}

\begin{abstract} 
Based on recent advances on the relation between geometry and representation theory, we propose a new approach to elliptic Schubert calculus.
We study the equi\-va\-riant elliptic characteristic classes of Schubert varieties of the generalized full flag variety $G/B$. For this first we need to twist the notion of elliptic characteristic class of Borisov-Libgober by a line bundle, and thus allow the elliptic classes to depend on extra variables. Using the Bott-Samelson resolution of Schubert varieties we prove a BGG-type recursion for the elliptic classes, and study the Hecke algebra of our elliptic BGG operators. For $G=\GL_n(\C)$ we find representatives of the elliptic classes of Schubert varieties in natural presentations of the K theory ring of $G/B$, and identify them with the Tarasov-Varchenko weight function. 
As a byproduct we find another recursion, different from the known R-matrix recursion for the fixed point restrictions of weight functions. On the other hand the R-matrix recursion generalizes for arbitrary reductive group $G$.
\end{abstract}

\maketitle



\section{Introduction}

The study of Schubert varieties and their singularities is a field where topology, algebraic geometry, and representation theory meet. An effective strategy to study Schubert varieties is assigning characteristic classes to them. However, the most natural characteristic class, the fundamental class, does not exist beyond K theory \cite{BE}---that is, e.g. the elliptic fundamental class depends on choices (e.g. choice of resolution, or choice of basis in a Hecke algebra) \cite{LZ}. Important deformations of the fundamental class appeared recently in the center stage in cohomology and K theory, under the names of Chern-Schwartz-MacPherson class and Motivic Chern class, partially due to their relation to Okounkov's stable envelope classes. In this paper we study the elliptic analogue of the CSM and MC classes, the {\em elliptic characteristic class of Schubert varieties}, which unlike the fundamental class, does {\em not} depend on choices.

\subsection{Background, early history}

The development of the concept of elliptic genus and its underlying characteristic class (for smooth manifolds) started in the second half of the 1980s, see e.g.~\cite{Ochanine, Landweber, Hirzebruch, Witten, Krichever, Hohn,Totaro}. Already at the beginning, the $S^1$ or $\C^*$ equivariant versions were considered, 
see e.g. the surveys of the subject in the introductions to \cite{BoLi0} and \cite{Li}.

According to the cited works  the elliptic class of a {\em smooth} complex manifold $X$, in {cohomology} has the form 
 \[
 \Ellh(X)=td(X)\,ch(\ELL(TX))\in H^*(X;\CC)[[q,z]],
 \]
where $\ELL(TX)\in K_\T(X)[y^{\pm 1/2}][[q]]$ is the elliptic complex defined in \cite[Lemma 2.5.2]{Hohn}, \cite[\S3]{Totaro}, \cite[Formula (3)]{BoLi0}, with $y=\eee^z$, $-\eee^z$ or $\eee^{\pm 2\pi i z}$ depending on the author, c.f. Definition \ref{def:Ell0} below. Here $ch$ denotes the Chern character and $td$ is the Todd class. For convenience
we will use cohomology with $\C$ coefficients, and will not indicate it anymore. 

It is worth noting that the elliptic class above can be interpreted as the Hirzebruch class of the free loop space $\mathcal LX=$Map$(S^1,X)$ localized at the set of constant loops $X\subset \mathcal LX$. This idea comes from Witten \cite[\S5]{Witten} and it is well explained by a heuristic argument in \cite[\S7.4]{HirzebruchBook}. 

The formula above for $\Ellh$ holds in the equivariant situation, say with the torus $\T=(\C^*)^n$ action. The only change is that the Borel cohomology has to be completed with respect to the gradation, since the Chern character is given by an infinite series.


\subsection{Extension to singular varieties: Borisov-Libgober} 
Applications of the elliptic genus to mirror symmetry created a necessity to define the elliptic class for singular varieties.
Borisov and Libgober in \cite{BoLi1} constructed a modification of the elliptic class which can be applied not only to singular varieties but to singular pairs consisting of a variety together with a Weil divisor. Their elliptic class $\Ellt(X,D)$ is defined if the pair $(X,D)$ has at worst  {\it Kawamata log-terminal singularities} (KLT).
 It is convenient to embed the singular pair in a smooth ambient space and consider the elliptic class as an element of the equivariant cohomology of the ambient space.
 
The starting point to define elliptic classes for a Schubert variety   $X_\om\subset G/B$ is Theorem~\ref{fullflagcnonical}, which collects results about the canonical divisor from \cite{Ram1, Ram2, BrKu, GrKu}.
In particular it follows that $K_{X_\om}+\partial X_{\om}$ is a Cartier divisor. This means that
 the pair $(X_\om,\partial X_\om)$ is Gorenstein,  however it is generally not KLT.

\subsection{The elliptic class for Schubert varieties $\Ek(X_\om,\lambda)$} \label{13}
To overcome the non-KLT property for Schubert varieties, we perturb the boundary divisor by a fractional line bundle $\LL{\lambda}$ depending on the new weight-parameter $\lambda\in\t^*$. Thus we will obtain a KLT pair, to which the Borisov-Libgober construction can be applied:
 \[
\Eh(X_\om,\lambda):=\Ellt(X_\om,\partial X_\om-\LL{\lambda})\in   H^*_\T(G/B ) ^{_\wedge} ((z))[[q]],
\]
where  $^\wedge$  denotes the completion  with respect to the gradation. The superscript {\it coh} indicates that the class lives in the {\em cohomology} of the flag variety. It turns out that the dependence on $\lambda$  of the resulting elliptic class is meromorphic, with poles at the walls of the Weyl alcoves. 

We will see that $\Eh(X_\om,\lambda)$ is of the form 
$$\Eh(X_\om,\lambda)=td(G/B)\,ch(\Ek(X_\om,\lambda))$$
with
$$\Ek(X_\om,\lambda)\in \big(K_\T(G/B)(\eee^{z/N})\otimes \CC(\T^*)\big)[[q]],$$
where  $N$ is an integer such that the weight  $N\lambda$ is integral.
 The class $\Ek(X_\om,\lambda)$ living now in {\em K theory} is the main protagonists of this paper. 
Treating $\Ek(X_\omega,\lambda)$ as a function on $(\lambda,z)\in \t^*\times \CC$, by quasi-periodicity properties of the theta function
we arrive to 
$$\Ek(X_\om)\in \big(K_\T(G/B)\otimes \CC(\T^*\times \C^*)\big)[[q]],$$
where
$\T^*\simeq\t^*/\t^*_\ZZ$ is the dual torus, the quotient of $\t^*$ by the integral weights lattice.
After choosing appropriate variables, we will obtain 
fairly concrete recursions and expressions for $\Ek(X_\om)$ in terms of Jacobi theta functions.


\subsection{Cohomology vs K theory vs elliptic cohomology} \label{14}
The functor $\hat H^*_\T(-)[[q]]$ can be considered as an equivariant complex-oriented cohomology whose Euler class of a line bundle is the theta function $\theta$. For our purposes this theory serves as the equivariant elliptic cohomology. The elliptic class in elliptic cohomology has a short and natural definition. For a smooth variety $X$ with a torus $\T$ action the elliptic class is defined as the equivariant Euler class of the tangent bundle. Here we consider the extended torus $\T\times\CC^*$, where $\CC^*$ acts trivially on $X$ and with the scalar multiplication by inverses on the fibers of the bundle. In terms of Chern roots $\xi_1,\xi_2,\dots,\xi_n$  the elliptic class is given by the formula 
$$\Ellh^{\,\rm ell}(X)=\prod_{k=1}^n\theta(\xi_k-z)\,,$$
where $z$ is a variable corresponding to the $\CC^*$ factor.
The elliptic classes for two complex orientations in Borel cohomology are related by the formula 
$$\Ellh(X)=\frac{\eehh(X)}{\eell(X)}\Ellh^{\,\rm ell}(X)=\prod_{k=1}^n\xi_k\frac{\theta(\xi_k-z)}{\theta(\xi_k)},$$
where $\eehh$ and $\eell$ are the Euler classes in  cohomology and in the elliptic theory. 
This is a classical approach to generalized  cohomology theories, and passage from one elliptic class to another is a Riemann-Roch type transformation, see \cite[42.1.D]{FoFu}.  We extend this method to define the elliptic class of singular pairs in elliptic cohomology. An advantage of this is that the classes $\Ee(X_\om)$ for Schubert varieties have better transformation properties. Moreover, it will be convenient to study the quotient (``local class'')
\[
\frac{\Eh(X_\om)}{\eehh(TG/B)}=\frac{\Ee(X_\om)}{\eell(TG/B)}=ch\left(\frac{\Ek(X_\om)}{\eekk(TG/B)}\right),
\]
instead of the three numerators.

\subsection{The Grojnowski model}
We would like to note that our version of elliptic cohomology has rational or complex coefficients, therefore the essential problems described in \cite{Landweber} are omitted. We work with equivariant Borel-type theory.
Our results apply to the delocalized equivariant elliptic cohomology in the Grojnowski sense as well. In his sketch \cite{Grojnowski} Grojnowski suggests that the elliptic cohomology should be defined as a sheaf of algebras over a product of elliptic curves. 
It would contain  information about equivariant cohomology of all possible fixed points with respect to subtori. 
In this approach the elliptic cohomology class would be a section of that sheaf. For flag varieties the restriction map $H^*_\T(G/B)\to H^*_\T((G/B)^\T)$ is injective, and the inversion of the Euler class of the tangent bundle does not weaken the formulas. 

In another approach (see \cite{GKV, Ganter}), where the elliptic cohomology is a scheme, the elliptic class would be a section of a so-called {\it Thom sheaf} over the scheme ${\bf E}(G/B)$, see \cite[\S7.2]{Ganter}. 
In this approach the restriction to fixed points corresponds to passing to the disjoint union of the products of elliptic curves.

These constructions of equivariant elliptic cohomology theories are not relevant for us. Our objective is to describe the combinatorics governing the characteristic classes, which can be achieved for the notions in Sections \ref{13}, \ref{14} above.

\subsection{Recursions}
The fixed points $(G/B)^\T$ are identified with the elements of the Weyl group~$W$. 
The local class  $\Ek(X_\om)/\eekk(TG/B)$ restricted to the fixed points can be considered an element of
\[
\mathcal M=
\bigoplus_{\sigma\in W} \CC(\T\times \T^*\times \C^*)[[q]],
\]
where $\CC(\T\times \T^*\times \C^*)$ stands for the field of rational functions.
We consider three actions of the Weyl group on $\mathcal M$:
\begin{itemize}
\item $W$ acts on $\mathcal M$ by permuting the components. For a reflection $s\in W$ the action is 
$$\left\{f_\sigma\right\}_{\sigma\in W}\quad\mapsto\qquad \left\{f_{\sigma s}\right\}_{\sigma\in W}.$$
This action on $\mathcal M$ will be denoted by $s^\gamma$.


\item $W$ acts on $\T$ and hence on $\CC(\T)\simeq \C(z_1,z_2,\dots,z_n)$ (the $z_i$ variables will be called the ``equivariant variables''). This action will be denoted by $s^z$.

\item $W$ acts on the space of characters and on the quotient torus $\T^*$. The resulting action on $\CC(\T^*)$ will be denoted by $s^\mu$.
\end{itemize}

Our first result is a recursive formula for the elliptic class.
It takes the  most elegant form when expressed 
 in the equivariant elliptic cohomology of $G/B$. Let $\alpha\in \t^*$ be a simple root, then

\begin{formula}\label{BS-intro}\begin{multline*}
\delta\left(\LL{\alpha},h^{\alpha^\vme}\right) 
\cdot s^\mu_\alpha \Ee(X_\om) -
\delta\left(\LL{\alpha},h\right)\cdot
s^\gamma_\alpha s^\mu_\alpha \Ee(X_\omega)=\\ 
=\begin{cases}\Ee(X_{\om s_\alpha})& \text{\rm if } \ell(\om s_\alpha)=\ell(\om)+1,\\ \\
\delta(h^{\alpha^\vme},h)\delta(h^{-\alpha^\vme},h)\cdot\Ee(X_\omega)
&\text{\rm if }\ell(\om s_\alpha	)=\ell(\om)-1.\end{cases}\end{multline*}\end{formula}
\noindent Here
\begin{itemize}
\item $\alpha^\vme$ is the dual root, the expression $h^{\alpha^\vme}$ is a function on $\t^*$,
\item $s_\alpha\in W$ is the  reflection in $\alpha$,
\item $\LL{\alpha}=G\times_B\C_{-\alpha}$ is the line bundle associated to the root $\alpha$,
\item $\delta(x,y)=\frac{\vt'(1)\vt{(x y)}}{\vt(x)\vt(y)}$  a certain function defied by the multiplicative version of the Jacobi theta function. 
\end{itemize}
The proof relies on the study of the Bott-Samelson inductive resolution of the Schubert variety. 

The recursion above can be studied in the framework of Hecke algebras. In Section \ref{sec:Hecke}  we present a version of a Hecke algebra which acts on the elliptic classes. We will also take this opportunity to explore the various degenerations of the elliptic class (and corresponding Hecke operations), such as Chern-Schwartz-MacPherson class, motivic Chern class, and cohomological and K theoretic fundamental class.


We prove that, in addition to the Bott-Samelson recursion above, elliptic classes of Schubert varieties satisfy another ``dual'' recursion in equivariant elliptic cohomology of $G/B$:
\begin{formula}\label{R-intro}\begin{multline*}
\delta\left(\eee^{-\alpha},h^{\om^{-1}\alpha^\vme}\right) 
\cdot \Ee(X_\om) - 
\delta\left(\eee^\alpha,h\right)\cdot s_\alpha^z
\Ee(X_\omega)=\\
=\begin{cases}\Ee(X_{s_\alpha\om})& \text{\rm if } \ell(s_\alpha\om)=\ell(\om)+1,\\ \\
\delta(h^{\omega^{-1}\alpha^\vme},h)\delta(h^{-\omega^{-1}\alpha^\vme},h)\cdot
\Ee(X_{s_\alpha\om})&\text{\rm if }\ell(s_\alpha\om)=\ell(\om)-1.\end{cases}
\end{multline*}\end{formula}

Remarkably, if $G=\GL_n$ then this recursion is equivalent to a three term identity in \cite{RTV} (and references therein), where this three term identity is interpreted as the R-matrix identity for an elliptic  quantum group. Hence we will call this second dual recursion the {\em R-matrix recursion}. 

\medskip

Let us introduce the rescaled elliptic classes  
\[
\Em(X_\om)=
\prod_{\alpha\in \Sigma_-\cap\, \omega^{-1}\Sigma_+} \delta(h^{-\alpha^\vme},h)^{-1}\cdot \Ee(X_\om).
\]
where $\Sigma_\pm$ is the set of positive/negative roots. In terms of this version both of our recursions (Formulas  \ref{BS-intro} and \ref{R-intro}) can be expressed in more compact forms:
\begin{theorem}[Bott-Samelson recursion] 
Let $\alpha$ be a simple root. Then
\[
\Em(X_{\om s_\alpha}) =
\frac{\delta( \LL{\alpha},h^{\alpha^\vme})}
{\delta(h^{-\alpha^\vme},h)}\cdot 
s_\alpha^\mu\Em(X_{\om})  
-
\frac{\delta(\LL{\alpha},h)}{\delta(h^{-\alpha^\vme},h)}\cdot
s_\alpha^\gamma s_\alpha^\mu\Em(X_{\om}).
\]
If $G=\GL_n$ then in the language of weight functions
$$
\LL{\alpha_k}=\tfrac{\gamma_{k+1}}{\gamma_k},\qquad h^{\alpha_k^\vme}= \tfrac{\mu_{k+1}}{\mu_k}.
$$
\end{theorem}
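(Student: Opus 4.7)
The plan is to deduce the theorem as a rescaling of Formula~\ref{BS-intro}. Write $F(\om):=\prod_{\alpha\in\Sigma_-\cap\om^{-1}\Sigma_+}\delta(h^{-\alpha^\vme},h)^{-1}$, so that by definition $\Em(X_\om)=F(\om)\Ee(X_\om)$. The key preparatory computation is to track how $F(\om)$ transforms under $\om\mapsto\om s_\alpha$. Using the identification $\Sigma_-\cap\om^{-1}\Sigma_+=-\mathrm{Inv}(\om)$ and the standard recursion for inversion sets (namely $\mathrm{Inv}(\om s_\alpha)=\{\alpha\}\sqcup s_\alpha\mathrm{Inv}(\om)$ when $\ell(\om s_\alpha)=\ell(\om)+1$, and its inverse in the descending case), a change of product variable yields
\[
F(\om s_\alpha)=\delta(h^{-\alpha^\vme},h)^{\pm1}\cdot s_\alpha^\mu F(\om),
\]
with the sign of the exponent depending on whether $\om\mapsto\om s_\alpha$ is ascending or descending.

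Next I would substitute this into $\Em(X_{\om s_\alpha})=F(\om s_\alpha)\Ee(X_{\om s_\alpha})$ and apply Formula~\ref{BS-intro}. Since $F(\om)$ depends only on the dynamical variable $h$ and the parameter $h^{\alpha^\vme}$, it is fixed by $s_\alpha^\gamma$, and one has $s_\alpha^\mu F(\om)\cdot s_\alpha^\mu(\cdot)=s_\alpha^\mu(F(\om)\cdot)$ and $s_\alpha^\mu F(\om)\cdot s_\alpha^\gamma s_\alpha^\mu(\cdot)=s_\alpha^\gamma s_\alpha^\mu(F(\om)\cdot)$. These two commutation identities convert every occurrence of $\Ee(X_\om)$ on the right-hand side of \ref{BS-intro} into the corresponding $\Em(X_\om)$, and the leftover scalar $\delta(h^{-\alpha^\vme},h)^{-1}$ coming from $F(\om s_\alpha)/s_\alpha^\mu F(\om)$ produces exactly the denominators in the statement of the theorem. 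In the ascending case this directly gives the claimed formula.

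The main technical point is the descending case $\ell(\om s_\alpha)=\ell(\om)-1$, where \ref{BS-intro} does not give $\Ee(X_{\om s_\alpha})$ but rather the identity $\delta(h^{\alpha^\vme},h)\delta(h^{-\alpha^\vme},h)\Ee(X_\om)=\delta(\LL{\alpha},h^{\alpha^\vme})s_\alpha^\mu\Ee(X_\om)-\delta(\LL{\alpha},h)s_\alpha^\gamma s_\alpha^\mu\Ee(X_\om)$. Here I would apply the ascending formula (already proved) with $\om$ replaced by the shorter element $\om s_\alpha$, which expresses $\Em(X_\om)$ in terms of $\Em(X_{\om s_\alpha})$ via the same operator $T_\alpha:=\tfrac{\delta(\LL\alpha,h^{\alpha^\vme})}{\delta(h^{-\alpha^\vme},h)}s_\alpha^\mu-\tfrac{\delta(\LL\alpha,h)}{\delta(h^{-\alpha^\vme},h)}s_\alpha^\gamma s_\alpha^\mu$ appearing in the theorem. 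Comparing the two expressions one sees that the desired descending formula $\Em(X_{\om s_\alpha})=T_\alpha\Em(X_\om)$ is equivalent to $T_\alpha^2\Em(X_{\om s_\alpha})=\Em(X_{\om s_\alpha})$, i.e.\ to the quadratic relation $T_\alpha^2=\mathrm{id}$ on the space of rescaled classes. This quadratic relation is a direct theta-function identity: it reduces, after cancelling the common factor $\delta(h^{-\alpha^\vme},h)^{-2}$, to a three-term relation between products of $\delta$-values equivalent to the well-known elliptic Fay identity. The extra factor $\delta(h^{\alpha^\vme},h)\delta(h^{-\alpha^\vme},h)$ that distinguishes the descending case in \ref{BS-intro} is precisely absorbed by the change in $F(\om)$ under $\om\mapsto\om s_\alpha$, which is exactly why the rescaling $\Em$ was chosen.

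Finally, the $\GL_n$ translation is a direct dictionary lookup. The simple roots are $\alpha_k=\epsilon_k-\epsilon_{k+1}$, and $\LL{\alpha_k}=G\times_B\C_{-\alpha_k}$ corresponds to $\mathcal L_{k+1}\otimes\mathcal L_k^{-1}$ where $\mathcal L_k=V_k/V_{k-1}$ are the tautological quotient line bundles on $G/B$; its $K$-theory class in the weight-function variables is $\gamma_{k+1}/\gamma_k$. Similarly, pairing $\alpha_k^\vme$ with $h=(\mu_1,\dots,\mu_n)$ yields $h^{\alpha_k^\vme}=\mu_{k+1}/\mu_k$, matching the conventions of \cite{RTV}. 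The hardest part of the proof is therefore the elliptic identity required for $T_\alpha^2=\mathrm{id}$; everything else is a careful but routine bookkeeping of the inversion-set combinatorics together with the Weyl-group actions $s^\mu_\alpha$ and $s^\gamma_\alpha$.
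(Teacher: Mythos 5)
Your approach — rescale Formula~\ref{BS-intro} by the prefactor $F(\om)=\prod_{\alpha\in\Sigma_-\cap\,\om^{-1}\Sigma_+}\delta(h^{-\alpha^\vme},h)^{-1}$, track $F$ via the inversion-set recursion, and use that $F$ depends only on the $\mu$- and $h$-variables so it commutes with $s_\alpha^\gamma$ and conjugates cleanly past $s_\alpha^\mu$ — is exactly the right one, and for the ascending case your argument is correct. Two corrections are needed.

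First, the relation $F(\om s_\alpha)=\delta(h^{-\alpha^\vme},h)^{\pm1}\,s_\alpha^\mu F(\om)$ is stated imprecisely: both the exponent \emph{and} the argument of $\delta$ change. The inversion-set recursion gives $F(\om s_\alpha)=\delta(h^{-\alpha^\vme},h)^{-1}\,s_\alpha^\mu F(\om)$ when $\ell(\om s_\alpha)=\ell(\om)+1$, while applying the same identity to the shorter word and solving yields $F(\om s_\alpha)=\delta(h^{\alpha^\vme},h)\,s_\alpha^\mu F(\om)$ in the descending case (the $s_\alpha^\mu$ flips $\alpha^\vme\mapsto-\alpha^\vme$, so the base changes too). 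Second, and this is the substantive slip: in $T_\alpha^2$ you cannot pull out $\delta(h^{-\alpha^\vme},h)^{-2}$ as a common scalar. The inner prefactor passes through the outer $s_\alpha^\mu$ and becomes $\delta(h^{\alpha^\vme},h)^{-1}$, so the combined factor is $\delta(h^{-\alpha^\vme},h)^{-1}\delta(h^{\alpha^\vme},h)^{-1}=\kappa_\alpha^{-1}$, and then $T_\alpha^2=\kappa_\alpha^{-1}\,C_\alpha^2=\id$ follows \emph{immediately} from Theorem~\ref{th:kappa}, with no further theta identity left to verify. With your factor $\delta(h^{-\alpha^\vme},h)^{-2}$ the claim $T_\alpha^2=\id$ would reduce to $\delta(h^{\alpha^\vme},h)=\delta(h^{-\alpha^\vme},h)$, which is false, so you would have been stuck had you carried the computation out.

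There is also a more direct route to the descending case that avoids squaring altogether: the second line of Formula~\ref{BS-intro} says the operator applied to $\Ee(X_\om)$ equals $\kappa_\alpha\,\Ee(X_{\om s_\alpha})$ (the paper's display shows $\Ee(X_\om)$ on the right, but comparison with Formula~\ref{R-intro} and Theorem~\ref{th:kappa} shows it must be $\Ee(X_{\om s_\alpha})$). Multiplying by $F(\om s_\alpha)=\delta(h^{\alpha^\vme},h)\,s_\alpha^\mu F(\om)$, the $\kappa_\alpha$ cancels against $\delta(h^{\alpha^\vme},h)$ to leave exactly $\delta(h^{-\alpha^\vme},h)^{-1}$, and commuting $s_\alpha^\mu F(\om)$ through the bracket converts each $\Ee$ into the corresponding $\Em$ term by term — giving the stated uniform formula in one step. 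The $\GL_n$ dictionary at the end is correct.
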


\begin{theorem}[R-matrix recursion]  
Let $\alpha$ be a simple root. Then
\[
\Em(X_{s_\alpha \om}) =
\frac{\delta(\eee^{-\alpha}, h^{\omega^{-1}\alpha^\vme})}
{\delta(h^{-\omega^{-1}\alpha^\vme},h)} \cdot
\Em(X_{\om}) 
+
 \frac{\delta(\eee^\alpha,h)}
{\delta(h^{-\omega^{-1}\alpha^\vme},h)} \cdot s_\alpha^z\Em(X_\om)
,\]
where $\eee^\alpha\in K_\T(pt)\simeq {\rm R}(\T)$ is the character corresponding to the root $\alpha$. If $G=\GL_n$ then $\eee^{\alpha_k}=\frac{z_k}{z_{k+1}}$.
\end{theorem}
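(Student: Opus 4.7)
The plan is to derive the theorem directly from Formula~\ref{R-intro} by absorbing the rescaling factor
$c(\omega) := \prod_{\beta \in \Sigma_- \cap\, \omega^{-1}\Sigma_+} \delta(h^{-\beta^\vme},h)^{-1}$,
so that $\Em(X_\omega) = c(\omega)\,\Ee(X_\omega)$. The key observation is that $c(\omega)$ depends only on $h$ (not on the equivariant variables), hence commutes with the Weyl-reflection action $s_\alpha^z$: one has $s_\alpha^z \Em(X_\omega) = c(\omega)\, s_\alpha^z \Ee(X_\omega)$. The whole task then reduces to computing the ratio $c(s_\alpha\omega)/c(\omega)$ in each of the two length cases of Formula~\ref{R-intro} and verifying that both cases collapse to the single uniform formula claimed.

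To compute that ratio I will use the standard identity $s_\alpha\Sigma_+ = (\Sigma_+\setminus\{\alpha\})\cup\{-\alpha\}$. Applying $\omega^{-1}$ and intersecting with $\Sigma_-$, exactly one root in the defining index set of $c$ changes between $\omega$ and $s_\alpha\omega$. In the length-increasing case $\ell(s_\alpha\omega) = \ell(\omega)+1$ (equivalently $\omega^{-1}\alpha \in \Sigma_+$) the set gains $-\omega^{-1}\alpha$, giving $c(s_\alpha\omega)/c(\omega) = \delta(h^{\omega^{-1}\alpha^\vme},h)^{-1}$. In the length-decreasing case $\ell(s_\alpha\omega) = \ell(\omega)-1$ (so $\omega^{-1}\alpha \in \Sigma_-$) the set loses $\omega^{-1}\alpha$, giving $c(s_\alpha\omega)/c(\omega) = \delta(h^{-\omega^{-1}\alpha^\vme},h)$.

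Next I will substitute $\Ee = \Em/c$ into Formula~\ref{R-intro} and multiply both sides by $c(s_\alpha\omega)$. In the length-increasing case the denominator $\delta(h^{-\omega^{-1}\alpha^\vme},h)$ of the theorem appears directly as $c(\omega)/c(s_\alpha\omega)$, up to the theta-function conversion between the two signs of the co-root. In the length-decreasing case the extra factor $\delta(h^{\omega^{-1}\alpha^\vme},h)\,\delta(h^{-\omega^{-1}\alpha^\vme},h)$ on the right-hand side of Formula~\ref{R-intro} combines with the reciprocal rescaling ratio $\delta(h^{-\omega^{-1}\alpha^\vme},h)$; the superfluous factor $\delta(h^{\omega^{-1}\alpha^\vme},h)$ cancels, leaving once more the single denominator $\delta(h^{-\omega^{-1}\alpha^\vme},h)$. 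Hence both cases produce the uniform formula of the theorem --- which is precisely the reason for introducing the rescaling $c(\omega)$ in the definition of $\Em$. The $\GL_n$ specialization $\eee^{\alpha_k} = z_k/z_{k+1}$ is then immediate from the definition of the simple-root character.

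I expect the main obstacle to be the careful Weyl-group bookkeeping of how inversion sets evolve under \emph{left} multiplication by $s_\alpha$ (in contrast to the right multiplication governing the Bott--Samelson recursion), together with tracking the parity conventions of $\vt$ needed to pin down the sign on the $s_\alpha^z$-term and to reconcile $\delta(h^{\omega^{-1}\alpha^\vme},h)$ with $\delta(h^{-\omega^{-1}\alpha^\vme},h)$ in the two cases. Beyond this combinatorial/theta-function bookkeeping no further geometric input is required: the entire proof is a corollary of Formula~\ref{R-intro}.
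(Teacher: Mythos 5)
Your overall strategy — substitute $\Em = c(\omega)\,\Ee$ with $c(\omega)=\prod_{\beta\in\Sigma_-\cap\,\omega^{-1}\Sigma_+}\delta(h^{-\beta^\vme},h)^{-1}$ into Formula~\ref{R-intro}, exploit that $c(\omega)$ is independent of the $z$-variables so it commutes with $s_\alpha^z$, and track how the index set $\Sigma_-\cap\omega^{-1}\Sigma_+$ changes under $\omega\mapsto s_\alpha\omega$ — is exactly the route the paper leaves implicit, and your description of the index-set change (gain $-\omega^{-1}\alpha$ if $\omega^{-1}\alpha\in\Sigma_+$, lose $\omega^{-1}\alpha$ if $\omega^{-1}\alpha\in\Sigma_-$) is correct. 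However, the final bookkeeping contains two errors that you cannot wave away as ``parity conventions''.

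In the length-increasing case your own computation gives $c(s_\alpha\omega)/c(\omega)=\delta(h^{\omega^{-1}\alpha^\vme},h)^{-1}$, so the denominator that emerges is $\delta(h^{\omega^{-1}\alpha^\vme},h)$, not $\delta(h^{-\omega^{-1}\alpha^\vme},h)$ as in the stated theorem. There is no ``theta-function conversion between the two signs of the co-root'': with $\delta(a,b)=\vt(ab)\vt'(1)/(\vt(a)\vt(b))$ one has $\delta(a^{-1},b)=-\vt(b/a)\vt'(1)/(\vt(a)\vt(b))$, which is a genuinely different function of $a$. In the length-decreasing case you then cancel the wrong factor: here $c(s_\alpha\omega)=c(\omega)\cdot\delta(h^{-\omega^{-1}\alpha^\vme},h)$, so it is the factor $\delta(h^{-\omega^{-1}\alpha^\vme},h)$ that cancels against one of the two $\delta$'s on the right of R-intro, again leaving $\delta(h^{\omega^{-1}\alpha^\vme},h)$ in the denominator. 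If you do both cases carefully they \emph{do} agree — that is why the rescaling unifies the formula — but they agree on $\delta(h^{\omega^{-1}\alpha^\vme},h)$, not on $\delta(h^{-\omega^{-1}\alpha^\vme},h)$. This points to a sign inconsistency in the paper between the definition of $\Em$ and the theorem; the normalization $\wwh'_\om$ in Section~\ref{sec:tale} (whose inversion product $\prod_{i<j,\omega(i)>\omega(j)}\delta(\mu_i/\mu_j,h)^{-1}$ corresponds to $\prod_{\gamma\in\Sigma_+\cap\,\omega^{-1}\Sigma_-}\delta(h^{-\gamma^\vme},h)^{-1}$) matches the theorem's denominator, so the definition of $\Em$ appears to be off by a coroot sign. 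A correct proof should detect this mismatch rather than invoke a non-existent identity.

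Separately, your derivation as carried out from Formula~\ref{R-intro} would produce a \emph{minus} sign in front of the $s_\alpha^z$-term, whereas the theorem has a plus. You flag this as something to ``pin down'' but do not resolve it. A $\GL_2$ spot-check using the table in the paper (e.g.\ $\omega=\id$, $\sigma=s_1$: $\Ee(X_{21})|_{21}=\vt(z_1/z_2)\delta(z_1/z_2,h)$, while $\Ee(X_{12})|_{21}=0$) shows that the two terms on the left of R-intro must combine with a plus; this is also what the paper's own remark ``the minus sign does not appear because additionally we have the action of $s_k^z$ compensating the sign'' asserts, so the minus printed in Formula~\ref{R-intro} is an error that must be corrected before your substitution step. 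With these two fixes in place the derivation is exactly the paper's.
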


\subsection{Relation to weight functions}

In \cite{RTV} certain special functions called {\em weight functions} are defined that satisfy the R-matrix recursion. These weight functions are the elliptic analogues of cohomological and K theoretic weight functions whose origin goes back to \cite{TV}. The three versions of weight functions play an important role in representation theory, in the theory of hypergeometric solutions of various KZ differential equations, and also turn up in Okounkov's theory of stable envelopes. 

The fact that elliptic classes of Schubert varieties satisfy the R-matrix recursion allows us to  prove
\begin{theorem}
The elliptic classes $\Ee(X_\om)$ for $G=\GL_n$ are represented by weight functions $\wwh_\om$, that is, 
$$\Ee(X_\om)=\eta(\wwh_\om),$$
where $\eta:K_{\T\times\T}(\End(\C^n))\to K_{\T\times\T}(\GL_n)\simeq K_\T(\GL_n/B)$ is the restriction map, which is  a surjection, composed with Chern character. 
\end{theorem}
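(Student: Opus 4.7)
The strategy is to match both sides by showing they satisfy the same R-matrix recursion and agree on a base case. The preceding theorem provides an R-matrix recursion for the rescaled classes $\Em(X_\om)$, which is equivalent via the rescaling factor to Formula~\ref{R-intro} for $\Ee(X_\om)$. Dually, by \cite{RTV} the weight functions $\wwh_\om$ obey a three-term R-matrix identity of exactly the same shape. Since $\eta$ is $\CC(\T\times\T^*\times\CC^*)$-linear and commutes with the equivariant Weyl action $s_\alpha^z$ (which acts only on the $z$-variables, which are untouched by $\eta$), $\eta(\wwh_\om)$ inherits the same recursion. Hence the identity $\Ee(X_\om)=\eta(\wwh_\om)$ will propagate through $W=S_n$ by induction on $\ell(\om)$, provided it holds at a single base case.

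I would first fix the dictionary between the two settings. Using the identifications $\eee^{\alpha_k}=z_k/z_{k+1}$, $h^{\alpha_k^\vme}=\mu_{k+1}/\mu_k$, $\LL{\alpha_k}=\gamma_{k+1}/\gamma_k$ recorded in the two preceding theorems, one checks term-by-term that the $\delta$-coefficients $\delta(\eee^{-\alpha},h^{\om^{-1}\alpha^\vme})/\delta(h^{-\om^{-1}\alpha^\vme},h)$ and $\delta(\eee^\alpha,h)/\delta(h^{-\om^{-1}\alpha^\vme},h)$ appearing on the geometric side agree with the coefficients appearing in the R-matrix relation for $\wwh_{s_\alpha\om}$ from \cite{RTV}.

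Next I would verify the base case and the inductive step. For $\om=e$, the Schubert variety $X_e$ is a single $\T$-fixed point, so its Borisov-Libgober class is concentrated at $e\in (G/B)^\T$ and evaluates to an explicit product of $\vt$-factors; the weight function $\wwh_e$ has an equally explicit product expression in \cite{RTV}, and its image under $\eta$ (fixed-point restriction composed with Chern character) produces the same formula. For the induction, assume $\Ee(X_\om)=\eta(\wwh_\om)$ and pick a simple root $\alpha$ with $\ell(s_\alpha\om)=\ell(\om)+1$; the two R-matrix recursions then express $\Ee(X_{s_\alpha\om})$ and $\wwh_{s_\alpha\om}$ as the \emph{same} linear combination of the $\om$-version and its $s_\alpha^z$-translate. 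Applying $\eta$ and invoking the induction hypothesis together with $\eta\circ s_\alpha^z=s_\alpha^z\circ\eta$ yields $\Ee(X_{s_\alpha\om})=\eta(\wwh_{s_\alpha\om})$. Since every element of $S_n$ is reachable from $e$ by simple ascents, this finishes the induction.

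The main obstacle, and the step requiring the most care, is the dictionary check. The paper uses the multiplicative theta function $\vt$ and the sign/normalization conventions discussed in Section 1.1, whereas \cite{RTV} employs conventions native to elliptic dynamical quantum groups. In particular, one must verify that the normalization built into $\wwh_\om$ in \cite{RTV} precisely absorbs the prefactor $\prod_{\alpha\in\Sigma_-\cap\,\om^{-1}\Sigma_+}\delta(h^{-\alpha^\vme},h)^{-1}$ that defines $\Em$, so that the two recursions match not only in shape but also in coefficients and initial data. Once this bookkeeping is in place, both the base case and the induction step reduce to routine verifications.
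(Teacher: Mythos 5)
Your proposal is correct and mirrors the paper's proof of Theorem~\ref{thm:EsameW}: the paper also reduces the claim to fixed-point restrictions, shows that the local elliptic classes $E_\sigma(X_\om)$ satisfy the R-matrix recursion \eqref{eq:Rw-recurence} obeyed by $\ww_{\om,\sigma}/\eell(T_\sigma\F(n))$ (Proposition~\ref{pro:ERrecursion}, proved by induction via the Bott-Samelson recursion), and observes that both sides share the same base case at $\om=\id$.
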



\subsection{By-products} 
Throughout the paper we heavily use equivariant localization, that is, we work with torus fixed point restrictions of the elliptic classes scaled by an Euler class:
$$E_\sigma(X_\om)=\frac{\Ek(X_\om,\lambda)_{|\sigma}}{\eekk(T_\sigma(G/B))},$$
where $\sigma$ is a torus fixed point. Thus our proofs are achieved by calculations of restricted classes that live in the equivariant K theory of fixed points. 
We should emphasize that many of our formulas encode deep identities among theta functions (occasionally we will remark on Fay's three term identity, and a four term identity). For us, these identities will come for free, we will not need to prove them. This is the power of Borisov and Libgober's theory: their classes are well defined. 

\medskip

A remarkable by-product of the fact that the $E(X_\om)$ classes satisfy two seemingly unrelated recursions is the fact that weight functions, besides satisfying the known R-matrix recursions, also satisfy a so far unknown recursion coming from the Bott-Samelson induction. This will be presented in Section \ref{sec:tale}, and will be interpreted as the R-matrix relation for the 3d mirror dual variety in a follow up paper (for a special case see \cite{Sm2}). 

\subsection{ Conventions.} We work with varieties over $\C$; for example by $\GL_n$ we mean $\GL_n(\C)$. By K-theory we mean algebraic K theory of coherent sheaves, which in our case is isomorphic to the topological equivariant K theory, see the Appendix of \cite{FRW}. The Weyl group of $\GL_n$ will be identified with the group of permutations. A permutation $\om:\{1,\ldots,n\}\to \{1,\ldots,n\}$ will be denoted by a list $\om=\om(1)\ldots\om(n)$. The permutation switching $k$ and $k+1$ will be denoted by $s_k$. Permutations will be multiplied by the convention that $\om \sigma$ is the permutation obtained by first applying $\sigma$ then $\om$; for example $231 \cdot 213=321$.
\medskip

For a general simply connected reductive group $G$ we fix a maximal torus $\T$ and a Borel group $B$ containing $\T$. The set of roots of $B$, according to our convention, are positive. For an integral weight $\lambda\in\t^*$ the line bundle \begin{equation}\label{def:LL}\LL{\lambda}=G\times_B\C_{-\lambda}\end{equation} is such that the torus $\T$ acts via the character $-\lambda$ on the fiber at $eB\in G/B$. With this convention $\LL{\lambda}$ is ample for $\lambda$ belonging to the interior of the dominant Weyl chamber. The half sum of positive roots is denoted by $\rho$. The canonical divisor  $K_{G/B}$ is isomorphic to $\LL{\rho}^{-2}$.  
\medskip

\noindent{\bf Acknowledgment.} We are grateful to Jaros{\l}aw Wi{\'s}niewski for very useful conversations on Bott-Samelson resolutions, and to Shrawan Kumar for discussions on the canonical divisors of Schubert varieties and their relation with Bott-Samelson varieties.

\section{The equivariant elliptic characteristic class twisted by a line bundle}\label{sec2}


First we recall the notion of the elliptic characteristic class of a singular variety, defined by 
Borisov and Libgober in \cite{BoLi1}. We study its equivariant version, and its behavior with respect to fixed point restrictions. In Section \ref{sec:twisted_gen} we define a version ``twisted by a line bundle and its section.''  This latter version will be used in the rest of the paper.

The original definition of elliptic genus for singular varieties arose from the study of mirror symmetry for a generic hypersurface in the toric variety associated to a reflexive polytope, \cite{BoLi0}. For a possibly singular variety $X$ with a Weil divisor $\Delta$ (which satisfy some assumptions, see the definition below) the elliptic genus was constructed by Borisov and Libgober in \cite{BoLi1}. Their construction goes as follows. Let  $f:Z\to X$ be a log-resolution of the pair $(X,\Delta)$ and $D=f^*(K_X+\Delta)-K_Z$. The genus is computed as an integral of a certain class $\Ellt(Z,D)$ defined in terms of the Chern classes of $Z$ and the divisor $D$. An alchemy of the formulas make the definition independent of the resolution. The key argument is that whenever we have a blowup in a smooth center $b:Z_1\to Z_2$ then $b_*(\Ellt(Z_1,D_1))=\Ellt(Z_2,D_2)$. By weak factorization theorem (i.e.~since any two resolutions differ by a sequence of blow-ups and blow-downs in smooth centers) the push-forward to the point of $\Ellt(Z,D)$ does not depend on the resolution. In fact more is obtained: the push-forward to $X$ does not depend on the resolution. This way homology classes are defined, since $X$ is singular in general. If $X$ is embedded in a smooth ambient space, then it is more convenient to consider (by Poincar\'e duality) the dual class in the cohomology of the ambient space. 

Below we give details of the sketched construction, introducing some simplifications and moving all the objects to K theory, from where they in fact come via the Chern character.

\subsection{Theta functions} \label{sec:theta}
For general reference on theta functions see e.g. \cite{We,Cha}. We will use the following version
\[
\vt(x)=\vt(x,q)=
x^{1/2}(1-x^{-1})\prod_{n\geq 1}(1-q^n x)(1-q^n /x)\in \Z[x^{\pm 1/2}][[q]].
\]
For a fixed $|q|<1$ the series is convergent and defines a holomorphic function on a double cover on $\CC^*$.
Throughout the paper we will use the function
\begin{equation}\label{def:delta}
\delta(a,b)=\frac{\vt(ab)\vt'(1)}{\vt(a)\vt(b)},
\end{equation}
which is meromorphic on $\C^*\times \C^*$ with poles whenever $a$ or $b$ is a power of $q$. 
As a power series in $q$ the coefficients of $\delta(a,b)$ are rational functions in $a$ and $b$
$$\delta(a,b)=\frac{1-a^{-1}b^{-1}}{(1-a^{-1})(1-b^{-1})}+q(a^{-1}b^{-1}- ab)+q^2(a^{-2}b^{-1}+a^{-1}b^{-2}- a^2b- ab^2)+\dots\,.$$
We will also use theta functions in additive variables, namely, let
$$
\theta(u)=\theta(u,\tau)=\vt(\eee^{u},q) 
$$
where $q=\eee^{2\pi i\tau}$,  $\tau\in \CC$, $im(\tau)>0$. 
Our function $\theta$ differs from the classical Jacobi theta-function only by a factor depending on $\tau$. Namely, according to Jacobi's product formula \cite[Ch~V.6]{Cha},
\begin{equation*}\label{Jacobi-prod-for}
\theta_\text{Jacobi}(u,\tau)=2\eee^{\pi i\tau/4}
 \sin (\pi u)\prod_{\ell=1}^{\infty}(1-\eee^{2\pi \ell i\tau})(1-\eee^{2\pi\ell i(\tau+u)}  )(1-\eee^{2\pi\ell i(\tau-u)} ),
\end{equation*}
and hence
\[
\theta_\text{Jacobi}(u,\tau)=\frac1iq^{1/8}\prod_{\ell=1}^{\infty}(1-q^{\ell})\cdot
\theta(2\pi i u,\tau).
\]
The theta function satisfies the quasi-periodicity identities
\begin{align}\label{theta_trans}
\theta(u+2\pi i,\tau)&=-\theta(u,\tau),\\
\notag \theta(u+2\pi i\tau,\tau)&=- \eee^{- u-\pi i \tau}  \cdot \theta(u,\tau).  
\end{align}
Note that the periods of our theta function differ by the factor $2\pi i$ comparing with the Jacobi theta function. Our convention fits well to the topological context, where $\vt$ is composed with the Chern character. The same convention was chosen for example in \cite{Hirzebruch, Hohn, HirzebruchBook}. 
We will take a closer look at transformation properties of several variable functions built out of theta functions in Section \ref{sec:transformations}.

\subsection{The elliptic class of a smooth variety}

First we define the elliptic class of a smooth variety $Z$, cf. \cite[Lemma 2.5.2]{Hohn}, \cite[\S3]{Totaro}, \cite[Formula (3)]{BoLi0}. 
For a rank $n$ bundle $\TTT$ over $Z$ with \gr roots $t_i$ (that is, $\sum_{i=1}^nt_i=[\TTT]$ in K theory) define
\begin{align}
\notag \Lambda_x(\TTT) & =\sum_{k=0}^{\rank \TTT}[\Lambda^k\TTT] x^k=\prod_{i=1}^n (1+xt_i) \   \in K(Z)[x],  \\
\notag S_x(\TTT) & =\sum_{k=0}^{\infty}[S^k\TTT] x^k=\prod_{i=1}^n \frac{1}{(1-xt_i)} \ \in K(Z)[[x]], \\
\label{eq:eK} \eekk(\TTT) & =\Lambda_{-1}(\TTT^*)=\prod_{i=1}^n (1-t_i^{-1}) \ \ \ \ \ \ \in K(Z).
\end{align} 
The last one plays the role of Euler class in K theory.

\begin{definition}\label{def:Ell0}
For a smooth variety $Z$ with tangent bundle $TZ$ whose \gr roots are $t_i$, define its {\em elliptic bundle} 
\begin{align*}
\ELL(Z)  = & 
y^{-\dim Z/2}\, \Lambda_{-y}\Omega^1_Z
\otimes\bigotimes_{n \ge 1}
\Bigl(\Lambda_{-yq^{n}}(\Omega^1_Z) \otimes \Lambda_{-y^{-1}q^n}
(TZ) \otimes S_{q^n}(\Omega^1_Z) \otimes
S_{q^n}(TZ) \Bigr) \\
 = &
y^{-\dim Z/2}\prod_{i=1}^{\dim Z}(1-yt_i^{-1})\prod_{n\geq 1}\frac{(1-yq^{n}t_i^{-1})(1-y^{-1}q^{n}t_i)}{(1-q^nt_i^{-1})(1-q^nt_i)} \\
 = &
\eekk(TZ) \prod_{i=1}^{\dim Z}\frac{\vt(t_i /y)}{\vt(t_i)}  
\hskip 7 true cm \in K(Z)[y^{\pm1/2}][[q]].
\end{align*}
\end{definition}

As we indicted in the notation $K(Z)[y^{\pm1/2}][[q]]$, the class $\ELL(Z)$ is a formal series in $q$, or equivalently, in $\eee^{2\pi i\tau}$. 
Since the power series defining the theta function converges for $|q|<1$ the class $\ELL(Z)$ can be considered as a function on $(y,q)\in \C^*\times \C_{\rm im >0}$ with values in a suitable completion of K theory. We regard $q$ or $y$ as a parameter of the theory, and hence  we do not  indicate this dependence, unless we want to emphasize it. 
\medskip

\subsection{The Borisov-Libgober class $\ELLt$} \label{sec:shelf}
For a singular pair $(X,\Delta)$ Borisov and Libgober defined an elliptic class $\Ellt(X,\Delta)$, living in cohomology. In this section we introduce a K theoretic modification $\ELLt(X,\Delta)$ of their construction. The relation to the original definition in \cite{BoLi1} is $\Ellt(X,\Delta)=td(TM) ch(\ELLt(X,\Delta))$.

Consider pairs $(Z,D)$  where $Z$ is smooth and projective and $D$ is
a SNC $\QQ$-divisor on $Z$ i.e. $D=\sum_{k=1}^\ell a_kD_k$ is a formal sum
such that the components $D_k$ are smooth
divisors on $Z$ intersecting transversely, $a_k\in\QQ$. We additionally require $a_k \neq 1$ for all $1\leq k\leq \ell$. Equally well we can consider divisors with complex coefficients.

\begin{definition}
Define
\begin{equation*}\label{ellgenpairsformula}
\ELLt(Z,D)=
\eekk(TZ)\prod_{i=1}^{\dim Z} \frac{\vt(t_ih) \vt'(1)} {\vt(t_i)\vt (h)} \prod_{k=1}^{\ell} \frac{\vt(d_k h^{1-a_k}) \vt(h)}{\vt(d_k h) \vt(h^{1-a_k})}\in K_\T(X)(h^{ 1/N})[[q]],
\end{equation*}
where $d_k=[{\mathcal O}(D_k)]$ and $t_i$ are the Grothendieck roots of $TZ$. Here $N$ is an integer divisible by all the denominators of the rational numbers $a_k$. 
\end{definition}

\begin{remark}[About notation]\rm Borisov and Libgober use the notation
$D=-\sum_{k=1}^\ell \alpha_kD_k$,  
but we reserve the letter $\alpha$ for a root of a Lie algebra, and we want to get rid of the minus sign. 
Another change is that we introduce the letter $h=y^{-1}$. This way we avoid the following inconsistency:  when $q\to 0$ then the elliptic genus converges to Hirzebruch genus $\chi_{-y}$, not to $\chi_y$. 
In H\"ohn's definition 
$y$ appears with a different sign. Our main reason of passing from $y$ to $h$ is to  agree with the notation of \cite{RTV} and with the circle of papers related to Okounkov theory. 
\end{remark}

\begin{remark} \rm 
Formally there are rational or even possibly complex powers of $h$ in the expression above. By this, here and in the whole paper we mean the following: for a formal variable $z$ satisfying $h=\eee^{-z}$, by  $\vt(h^a)$ we mean  
\[
\vt(h^a)=\vt((\eee^{- z})^a)=\vt(\eee^{- za})=\theta(-za). 
\]
To keep the exposition simple we will not explicitly work with the $z$ variable anymore, and this kind of dependence on formal powers of $h$ we indicate by $K_{\T}(X)(h^{ 1/N})$. 
\end{remark}

Observe that if the divisor $D$ is empty then
\[
\ELLt(Z,\emptyset)=\left(\tfrac{\vt'(1)}{\vt(h)}\right)^{\dim Z}\ELL(Z).
\]

The two definitions above generalize without change to the torus equivariant case: the case when $\T=(\CC^*)^n$ acts on $Z$ or $(Z,D)$, and $t_i$ are the $\T$ equivariant \gr roots. One advantage of torus equivariant K theory is the tool of fixed point restrictions. Let $x$ be a $\T$ fixed point on $Z$, and consider the restriction of $\ELLt(Z,D)$ to $x$. Here each $d_k$ can be chosen as a \gr root of the tangent bundle. By introducing artificial components of $D$ with coefficient $a_i=0$ if necessary, we may now assume that $d_i=t_i$ for all $i=1,\ldots,\dim Z$, and we obtain 
\begin{align}\label{fixed} \notag
\ELLt(Z,D)_{|x}= & \eekk(TZ)\prod_{i=1}^{\dim Z} \frac{\vt(t_i h^{1-a_i})\vt'(1) }  {\vt(t_i)\vt(h^{1-a_i})} \\
 = & \eekk(TZ)\prod_{i=1}^{\dim Z} \delta(t_i, h^{1-a_i}) \hskip 1 true cm \in K_{\T}(x)(h^{1/N})[[q]]=\RR(\T)(h^{1/N})[[q]].
\end{align}

Now we are ready to define the elliptic class for certain pairs of singular varieties.  Let $X$ be a possibly singular subvariety of a smooth variety $M$, and $\Delta$ a divisor on $X$. We say that $(X,\Delta)$ is a KLT pair, if 
\begin{itemize} 
\item $K_X + \Delta$ is a $\QQ$-Cartier divisor;
\item there exists a map $\f:Z\to M$ which is a log-resolution $(Z,D)\to(X,\Delta)$ (i.e. $Z$ is smooth, $D$ is a normal crossing divisor on $Z$, $\f$ is proper and is an isomorphism away from $D$) such that 
\begin{enumerate}[(i)]
\item  $D=\sum_ka_kD_k$, $a_k<1$,\footnote{In the classical definition it is required that $\Delta=-\sum \alpha_i \Delta_i$ with $\alpha_i\in(-1,0]$ (hence $a_i\in [0,1)$), but we do not need positivity of $a_i$'s.}
\item $K_Z+D = \f^*(K_X + \Delta)$.
\end{enumerate}
\end{itemize}

\begin{definition}\label{def:KLTEll}
The elliptic class of a KLT pair $(X,\Delta)$ is defined by
$$\ELLt(X,\Delta;M):=\f_*(\ELLt(Z,D)) \in K(M)(h^{1/N})[[q]].$$
\end{definition}



The key result in \cite{BoLi1} goes through without major changes to show that $\ELLt(X,\Delta;M)$ as  defined here is independent of the choice of the resolution. When allowing complex coefficients $a_k$, the proof of independence is unchanged as long as $a_k\not\in \NN_{\geq 1}$. Note however that to make sense of the definition we have to ensure that some $\C$-multiple of $K_X+\Delta$ is a Cartier divisor. In the presence of a torus action $\ELLt(X,\Delta;M)$ is defined formally the same way, see details in \cite{Wae, DBW}. We will not indicate the torus action in the notation.

\begin{remark}\rm To avoid the dependence of an embedding into an ambient space $M$ and work directly with the singular space $X$ we would have to deal with the K theory of coherent sheaves denoted in the literature by $G(X)$. This complication is unnecessary for our purposes.\end{remark} 

\begin{remark} \rm 
The elliptic class is the ``class version'' of the {\em elliptic genus} studied in detail in the literature. Namely, let $h=\eee^{-z}$, 
and let $Z$ be Calabi-Yau. Then the elliptic genus $\eta_*(\ELL(Z))$ ($\eta:Z\to$ point) is a holomorphic function on $\CC_z\times \CC_{\rm im \tau>0}$, and it is a quasi-modular form of weight 0 and index $\frac{\dim Z}2$. 
Now let us assume that $N$ times the multiplicities of $\Delta$ are integers, and that $(X,\Delta)$ is a Calabi-Yau pair (i.e. $K_X+\Delta=0$). Then the ``genus''
\[
\left(\tfrac{\theta(-z,\tau)}{\theta'(0,\tau)}\right)^{\dim X} \eta_*( \ELLt(X,\Delta;M))
\]
has transformation properties of Jacobi forms of weight $\dim X$ and index $0$ \cite[Prop. 3.10]{BoLi1}, with respect to the subgroup of the full Jacobi group 
 generated by the transformations\footnote{Since our theta function is quasiperiodic with respect to the lattice $2\pi i\langle1,\tau\rangle$, we have rescaled the formulas with respect to the those appearing in \cite{BoLi1}.}
$$\begin{array}{lll}(z, \tau ) \mapsto  (z +2\pi i N, \tau ),&& (z, \tau ) \mapsto (z + 2N\pi i \tau, \tau ), \\ \\
 (z, \tau ) \mapsto (z, \tau + 1), && (z, \tau ) \mapsto (z/(2\pi i\tau), -1/\tau ).\end{array}$$
\end{remark}

\subsection{Calculation of the elliptic class via torus fixed points} \label{sec:localization}

The class $\ELLt(X,\Delta;M)$ is defined via the push-forward map $\f_*$. From the well known localization formulas (which we call Lefschetz-Riemann-Roch theorem) for push-forward in torus equivariant K theory, see e.g.~\cite[Th. 5.11.7]{ChGi}, we obtain the following proposition.

\begin{proposition} \label{prop:localization_general}
 Assume that in the $\T$ equivariant situation of Definition \ref{def:KLTEll} the fix point sets $M^{\T}$ and $Z^\T$ are finite. Then for $x\in M^\T$ in the fraction field of $K_\T(x)(h^{1/N})[[q]]$ we have
\begin{equation}\label{EllResLoc}
\frac{\ELLt(X,\Delta;M)_{|x}}{\eekk(T_xM)}= \sum_{x'\in \f^{-1}(x)} \frac{\ELLt(Z,D)_{|x'}}{\eekk(T_{x'}Z)}.
\end{equation} 
\qed
\end{proposition}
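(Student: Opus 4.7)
The plan is to invoke directly the Lefschetz--Riemann--Roch theorem for equivariant $K$-theory, precisely in the form of \cite[Th.~5.11.7]{ChGi} that is already cited just before the proposition. The statement is essentially a specialization of that theorem, so the work is mostly one of matching hypotheses.

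First, I would record the abstract localization formula: for a proper $\T$-equivariant morphism $g:Y_1\to Y_2$ between smooth $\T$-varieties with finite fixed loci, and any class $\beta\in K_\T(Y_1)$, one has, after inverting the appropriate Euler classes,
$$\frac{(g_*\beta)_{|y}}{\eekk(T_y Y_2)}=\sum_{y'\in g^{-1}(y)\cap Y_1^{\T}}\frac{\beta_{|y'}}{\eekk(T_{y'}Y_1)}\qquad\text{for each }y\in Y_2^{\T},$$
the identity taking place in the fraction field of $\RR(\T)$.

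Second, I would specialize to the geometric data of the proposition: set $g=f:Z\to M$, which is proper by the definition of a log-resolution; $Z$ is smooth by construction and $M$ is smooth by the standing hypothesis on the ambient space. Take $\beta=\ELLt(Z,D)$. Because $\ELLt(X,\Delta;M)=f_*\ELLt(Z,D)$ by Definition~\ref{def:KLTEll}, restricting to $x\in M^{\T}$ and dividing by $\eekk(T_xM)$ yields exactly the right-hand side of \eqref{EllResLoc}. The finiteness assumptions on $M^{\T}$ and $Z^\T$ stated in the proposition guarantee that the sum on the right is finite and consists of isolated point contributions, so no integral over positive-dimensional components of $Z^\T$ is needed.

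Third, I would address the enlarged coefficient ring $K_\T(X)(h^{1/N})[[q]]$. The variables $h^{1/N}$ and $q$ are formal parameters, independent of the $\T$-action and of the morphism $f$; restriction to a fixed point and proper pushforward are both $\C(h^{1/N})[[q]]$-linear. Consequently the Lefschetz--Riemann--Roch identity, which is $\mathrm{Frac}(\RR(\T))$-linear, extends termwise to coefficients in this ring without modification, and the proposition follows.

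I do not anticipate any substantive obstacle: all hypotheses of the cited localization theorem are built into the setup, and the only point requiring mild care is to work inside a localization where the Euler classes $\eekk(T_xM)$ and $\eekk(T_{x'}Z)$ are invertible, which is the reason the statement is formulated in the fraction field of $K_\T(x)(h^{1/N})[[q]]$.
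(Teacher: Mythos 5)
Your proposal is correct and follows exactly the paper's intended argument: the paper simply states that the proposition is obtained from the Lefschetz--Riemann--Roch localization theorem of \cite[Th.~5.11.7]{ChGi} applied to the pushforward $f_*$ in Definition~\ref{def:KLTEll}, which is precisely what you spell out. You go slightly further than the paper (which leaves the proof implicit behind a \qed) by explicitly checking the hypotheses and the passage to the enlarged coefficient ring, but there is no difference in method.
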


This formula motivates the definition of the local elliptic class
\begin{equation}\label{eq:Elocal}
\EE_x(X,\Delta;M)=\frac{\ELLt(X,\Delta;M)_{|x}}{\eekk(T_xM)}
\end{equation}
for a fixed point $x$ on $X$. In fact the division by $\eekk(T_xM)$ gets rid of the dependence on $M$, so we set $\EE_x(X,\Delta)=\EE_x(X,\Delta;M)$ for some $M$. Indeed, having two equivariant embeddings $\iota_i:X\hookrightarrow M_i$ for $i=1,2$ we obtain the third, the diagonal one $\iota:X\hookrightarrow M=M_1\times M_2$. Let  $x\in X^\T$ be an isolated fixed point. Then by Lefschetz-Riemann-Roch theorem for the projection $\pi_i:M\to M_i$ we have
$$\frac{\ELLt(\iota(X),\iota(\Delta);M)}{\eekk(TM)}=\frac{\ELLt(\iota_i(X),\iota_i(\Delta);M_i)}{\eekk(TM_i)}$$
for $i=1,2$.

From Proposition \ref{prop:localization_general} we obtain 
\begin{equation}\label{EllResLoc1}
\EE_x(X,\Delta)  = \sum_{x'\in \f^{-1}(x)} \EE_{x'}(Z,D)  = \sum_{x'\in \f^{-1}(x)}\prod_{k=1}^{\dim Z}\delta(t_k(x'),h^{1-a_k(x')}), 
\end{equation} 
where $t_k(x')$ and $a_k(x')$ denote the tangent characters and multiplicities of the divisors at the torus fixed point $x'$.

\begin{remark} \rm \label{rem:EllH} 
Our choice in this paper is to use equivariant K theory as the home of the elliptic characteristic classes
and we have two possible ways of expressing the elliptic genus:
$$p_*^{\rm K} \ELLt(X,\Delta;M)=\int_M td(M)ch(\ELLt(X,\Delta;M)),$$
where $p:M\to pt$ and $p_*^K$ the is  push-forward in the K theory, i.e.~$\chi(M,-)$.
 We could have decided differently by setting up equivariant {\em elliptic cohomology} 
to be the rational Borel equivariant cohomology, extended by the formal variable $q$ and with the Euler class given by the theta function. In that contexts we would define the elliptic class $\Ellt^{\rm ell}(X,\Delta;M)$ as the push forward (in elliptic cohomology) of the suitable class defined for a resolution. The resulting class satisfies
\begin{equation}\label{formaldef}
\Ellt^{\rm ell}(X,\Delta;M)=\frac{\eell(TM)}{\eehh(TM)}\Ellt(X,\Delta;M)
\end{equation}
according to the general Grothendieck-Riemann-Roch theorem. Not going into details let us take~\eqref{formaldef} as the definition of the {\em elliptic class in equivariant elliptic cohomology}.
We  have
$$p_*^{\rm ell} \Ellt^{\rm ell}(X,\Delta;M)=\int_M \frac{\eehh(TM)}{\eell(TM)}\Ellt^{\rm ell}(X,\Delta;M)=\int_M \Ellt(X,\Delta;M)\,.$$
Observe that the quotient in \eqref{eq:Elocal}
$$\EE_x(X,\Delta)=\frac{\Ellt^{?}(X,\Delta;M)}{e^?(TM)}$$
is not only independent of $M$ but it
is also essentially independent  of the cohomology theory used. The only thing that has to by changed when passing from K theory to cohomology or Borel elliptic theory are the substitutions   $h=\eee^{-z}$ 
and $z_i=\eee^{x_i}$, where $z_i$'s are the basic characters $\T\to \C^*$ and $x_i\in \t^*$ the basic weights.
The use of K theory is more economic, since there the classes $\EE_x(X,\Delta)$ are power series in $q$ with coefficients in rational functions in $z_i$ and roots of $h$, depending on the denominators of the multiplicities $a_i$:
$$\EE_x(X,\Delta)=Frac\big(\RR(\T)(h^{1/N})\big)[[q]]\,.$$
\end{remark}

\begin{example} \rm
Consider the standard action of $\T=(\CC^*)^2$ on $M=X=\C^2$. Denote $\C_x=\{x=0\}$, $\C_y=\{y=0\}$, and let them represent the classes $t_1$ and $t_2$ in $K_\T(\C^2)$. Consider the divisor $\Delta=a_1\C_x+a_2\C_y$.
Taking the identity map as resolution, using  \eqref{fixed} we obtain 
\[
\EE_0(X,\Delta)=
\delta(t_1,h^{1-a_1})  \delta(t_2,h^{1-a_2}).
\]
Now we calculate $\ELLt(X,\Delta;M)$ in another way. Consider the blow-up $Z=Bl_0X$, $\f:Z\to X$ with exceptional divisor $E$, and let the strict transforms of $\C_x, \C_y$ be $\tilde{\C}_x,\tilde{\C}_y$. Define the divisor $D=a_1\tilde{\C}_x +a_2\tilde{\C}_y+(a_1+a_2-1)E$. Since $\f^*K_X=K_Z-E$, and $\f^*(\Delta)=a_1\tilde{\C}_x + a_2\tilde{\C}_y+(a_1+a_2)E$ (from calculations in coordinates) we have $\f^*(K_X+\Delta)=K_Z+D$.
Hence, by \eqref{EllResLoc} and using \eqref{fixed} at the two $\T$ fixed points in $\f^{-1}(0)$ we obtain
\[
\EE_0(X,\Delta) =  \delta(t_1,h^{2-a_1-a_2}) \delta(t_2/t_1,h^{1-a_2}) + \delta(t_2,h^{2-a_1-a_2}) \delta(t_1/t_2,h^{1-a_1}).
\]
Observe that the comparison of the two formulas above 
boils down to the identity 
\[
\delta(t_1,h^{1-a_1})  \delta(t_2,h^{1-a_2})=
 \delta(t_1,h^{2-a_1-a_2}) \delta(t_2/t_1,h^{1-a_2}) + \delta(t_2,h^{2-a_1-a_2}) \delta(t_1/t_2,h^{1-a_1}),
\]
which is a rewriting of the well known {\em Fay's trisecant identity}
\begin{multline}
 \theta(a+c)\theta(a-c)\theta(b+d)\theta(b-d)=\\
 \theta(a+b)\theta(a-b)\theta(c+d)\theta(c-d)+ \theta(a+d)\theta(a-d)\theta(b+c)\theta(b-c),
\label{trisecant}\end{multline}
see e.g. \cite{Fay}, \cite[Thm. 5.3]{FRV}, \cite{GL}.
\end{example}

\subsection{The elliptic class $\ELLt$ twisted by a line bundle}\label{sec:twisted_gen}

For the main application of the present paper (elliptic classes of Schubert varieties) we need a modified version of the notion $\ELLt(X,\Delta;M)$. The following example explains why.

\begin{example} \rm
Let $X_{\om}\subset G/B$ be a Schubert variety, and let $f_\omm:Z_\omm\to X_{\om}$ be a Bott-Samelson resolution (as defined in Section \ref{BoSa}).
Set 
\[
\Delta=\partial X_{\om}=\sum_{X_{\om'}\subset X_{\om},\;\dim(X_{\om'})=\dim(X_{\om})-1} [X_{\om}].
\] 
It is a fact that $K_{X_{\om}}+\Delta$ is a Cartier divisor and $f_\omm^*(K_{X_{\om}}+\Delta)=K_{Z_\omm}+D,$ where $D=\partial Z_\omm$ has all coefficients  equal to 1, see Theorem \ref{fullflagcnonical} below. 
Hence a crucial condition in the definition of $\ELLt$ is not satisfied for $(X_\om,\partial X_\om)$.
\end{example}

Let $X$ be a possibly singular subvariety of a smooth variety $M$, and $\Delta$ a divisor on $X$. Assume that 
$K_X + \Delta$ is a $\QQ$-Cartier divisor; and that there exists a map $\f:Z\to M$ which is a log-resolution $(Z,D)\to(X,\Delta)$ such that $K_Z+D = \f^*(K_X + \Delta)$. Observe that we have not assumed anything about the coefficients of the divisor $D$ (cf. the definition of KLT pair in Section \ref{sec:shelf}).

Let $L$ be a  line bundle on $X$ and $\xi\in H^0(X;L)$ a section such that $\xi$ does not vanish on $X\setminus supp(\Delta)$. Assuming $supp(\Delta)=supp(zeros(\xi))$ we have $supp(f^*D)=supp(zeros(f^*\xi))$. Denote 
\[
\Delta(L,\xi)=\Delta-zeros(\xi).
\]

If $L$ is sufficiently positive then the pair $(X,\Delta(L,\xi))$ is a KLT pair. Therefore we can define the twisted elliptic class of $(X,\Delta)$. The definition makes sense for a fractional power of $L$. Then $\ELLt(X,\Delta(L,\xi);M) \in K(M)(h^{1/N})[[q]]$ for a sufficiently divisible $N$.

\begin{remark} \rm
In practice, see Section \ref{sec:twisted} below, the line bundle $L$ will be associated with certain integer points $\lambda$ of a vector space, 
$V$ consisting of Cartier divisors supported by $supp(\Delta)$. The dependence of the twisted elliptic class on $\lambda\in V$ will be meromorphic; so we can extend the definition to a meromorphic function on $V$: the twisted elliptic class of $(X,\Delta)$ will be a class defined for almost all $\lambda$, hence understood as a meromorphic function on $V$.
\end{remark}


\begin{remark}\rm 
It is tempting to think that the ``right'' elliptic class notion for a Schubert variety $X_\om$ is either $\ELLt(X_\om,\emptyset)$ or $\ELLt(X_\om,\partial X_\om)$, and the trick of ``twisting with a line bundle'' in this section is not necessary. However, in general we cannot take $\Delta=\emptyset$, since $X$ may not have $\QQ$ Gorenstein singularities, and then $f^*(K_X)$ does not make sense.  This is the case for most of the Schubert varieties. The pair $(X_\om,\partial X_\om)$ is Gorenstein, i.e.~the divisor $K_{X_\om}+\partial X_\om$ is Cartier (see Theorem \ref{fullflagcnonical} below), but the multiplicities $a_i$ take the forbidden value 1. 
\end{remark}

\section{Bott-Samelson resolution and the elliptic classes of Schubert varieties}
\label{BoSa}

In this section we define elliptic classes of Schubert varieties following the general line of arguments in Section \ref{sec2}. For this, after introducing the usual settings of Schubert calculus, we describe a resolution of Schubert varieties inductively. 

\smallskip

Let $G$ be semisimple group with Borel subgroup $B$, maximal torus $\T$, and Weyl group $W$. Simple roots will be denoted by $\alpha_1,\alpha_2,\ldots,$ and the corresponding reflections in $W$ by $s_1,s_2, \ldots$.  We consider reduced words in the letters $s_k$, denoted by $\omm$. A word $\omm$ represents an element $\om\in W$. The length $\ell(\om)$ of $\om$ is the length of the shortest reduced word representing it.

We will study the homogeneous space $G/B$. For $\om\in W$ let $\tilde{\om}\in N(\T)\subset G$ be a representative of $\om\in W=N(\T)/\T$, and let $x_\om=\tilde{\om}B \in G/B$. The point $x_\om$ is fixed under the $\T$ action. The $B$ orbit $X^\circ_\om=Bx_\om=B\tilde{\om}B$ of $x_\om$ will be called a Schubert cell, and its closure $X_\om$ the Schubert variety. In this choice of conventions we have $\dim(X_{\om})=\ell(\om)$.

\subsection{The Bott-Samelson resolution of Schubert varieties}\label{sec:BSres}
Let the reduced word $\omm$ represent $\om\in W$. The Bott-Samelson variety $Z_{\omm}$, together with a resolution map $f_{\omm}: Z_{\omm}\to X_{\om}$ of the Schubert variety $X_{\om}$ is constructed inductively as follows. Suppose $\omm=\omm's_k$ is a reduced word. 
Let $P_k\supset B$ be the minimal parabolic containing $B$, such that $W_{P_k}=\langle s_k\rangle$. The map $\pi_k:G/B\to G/P_k$ is a $\PP^1$ fibration. It maps the open cell $X^\circ_{\om'}$ isomorphically to its image. 
We have $X_{\om}=\pi_k^{-1}\pi_k(X_{\om'})$ and $\pi_k$ restricted to  $X^\circ_\om$ is an $\AA^1$ fibration. 
The variety $Z_\omm$ fibers over $Z_{\omm'}$ with the fiber $\PP^1$. 
We have a pull-back diagram
\begin{equation}\label{BS-diagram}\xymatrixcolsep{3pc}\xymatrix{Z_\omm\ar[d]^{\pi_{\omm}}\ar[r]^{f_\omm}&X_{\om} \ar[d]\ar@{^{(}->}[r]& G/B \ar[d]^{\pi_k}\\
Z_{\omm'}\ar@/^1pc/[u]^\iota\ar[r]^{\pi_k\circ f_{\omm'}\phantom{aa}}&\pi_k(X_{\om'})\ar@{^{(}->}[r]& G/P_k. \\}
\end{equation}
The projection $\pi_\omm:Z_\omm\to Z_{\omm'}$ 
has a section $\iota$, such that 
$f_\omm\circ\iota=f_{\omm'}$.
The relative tangent bundle for $\pi_k$ is denoted by $L_k$. It is associated  with the $\T$ representation of weight $-\alpha_k$, see \cite{Ram1}, \cite[\S3]{OSWW}. According to our notation given in \eqref{def:LL}$$L_k=\LL{\alpha_k}.$$ 

\subsection{Fixed points of the Bott-Samelson varieties}\label{sec:fixZ}

Let $\omm=s_{k_1}s_{k_2}\dots s_{k_\ell}$ be a reduced word representing $\om\in W$ and let 
$f_\omm:Z_\omm\to X_{\om}$ be the Bott-Samelson resolution of the Schubert variety $X_{\om}$. The $\T$ fixed points of $X_\om$ and $Z_\omm$ are discrete, namely:
\begin{itemize}
\item The fixed points $(X_{\om})^\T$ are $x_{\om'}$ where $\om'\leq \om$ in the Bruhat order. 
\item The fixed points $(Z_\omm)^\T$ are indexed by subwords of $\omm$ (which are words obtained by leaving out some of the letters from $\omm$). We identify subwords with 
01-sequences (where $0$'s mark the positions of the letters to be omitted). We will identify a fixed point with its subword and with its 01-sequence. 
\end{itemize}
The map $f_\omm$ sends the sequence $(\epsilon_1,\epsilon_2,\dots,\epsilon_\ell)$ to $x_\sigma$ where
$\sigma=s_{k_1}^{\epsilon_1}s_{k_2}^{\epsilon_2}\dots s_{k_\ell}^{\epsilon_\ell}\in W$.

\begin{example}\rm \label{ex:table}
Let $G=\GL_3$ and $\omm=s_1s_2s_1$. 
\begin{center}
\def\pp{\phantom{aa}}
\begin{tabular}{ |c|c|r| } 
 \hline
sequence&subword&image\pp\pp\\
\hline  
000 & $\cancel{s_1}\cancel{s_2}\cancel{s_1}$ & $\id=123$\pp \\ 
 001 & $\cancel{s_1}\cancel{s_2}s_1$ & $s_1=213$\pp\\
 010 & $\cancel{s_1}s_2\cancel{s_1}$ & $s_2=132$\pp \\
 011 & $\cancel{s_1}s_2s_1$ & $s_2s_1=312$\pp \\
 100 & $s_1\cancel{s_2}\cancel{s_1}$ & $s_1=213$\pp \\
 101 & $s_1\cancel{s_2}s_1$ & $\id=123$\pp \\
 110 & $s_1s_2\cancel{s_1}$ & $s_1s_2=231$\pp \\
 111 & $s_1s_2s_1$ & $s_1s_2s_1=321$\pp \\
 \hline
\end{tabular}
\end{center}
\end{example}

From the recursive definition of $Z_{\omm}$ above we find the recursive description of the tangent characters of $Z_\omm$ at the fixed point $x\in \{0,1\}^\ell$:
\[
\characters(Z_\omm,x)=
\begin{cases}
\characters(Z_{\omm'},x')\cup \{(L_k)_{f_{\omm'}(x')}\} & \text{ if } x=(x',0) \\
\characters(Z_{\omm'},x')\cup \{(L^{-1}_k)_{f_{\omm'}(x')}\} & \text{ if } x=(x',1).
\end{cases}
\]
Note that $L_k$ or $L_k^{-1}$ at a fixed point is just a line with $\T$ action, i.e.~a character, so it can indeed be interpreted as a function on $\T$ or an element of $\RR(T)$. In theory characters form a multiset, so above the $\cup$ should mean union of multisets, but in fact no repetition of characters occurs, hence there is no need for multisets.

\subsection{Canonical divisors of Schubert and Bott-Samelson varieties}
The starting point for the computation of the elliptic classes of Schubert varieties is the following fact.
Let $f_\omm:Z_\omm\to X_\om$ be the Bott-Samelson resolution associated to the word $\omm$. The subvariety $\partial Z_\omm=f_\omm^{-1}(\partial X_\om)$ with the reduced structure is of codimension one. Its components correspond to the subwords with one letter omitted:  
$$\partial Z_\omm=\bigcup_{i=1}^{\ell(\om)}\partial_i Z_\omm.$$
We consider $\partial Z_\omm$ as a divisor, the sum of components with the coefficients equal to one. The last component $\partial_{\ell(\om)}Z$ is the image of the section $\iota$ in the diagram \eqref{BS-diagram}.
\begin{theorem}  \label{fullflagcnonical}
The divisor $K_{X_{\om}}+\partial X_{\om}$ is a Cartier divisor, and
we have 
\[
f_{\omm}^*(K_{X_{\om}}+\partial X_{\om})=K_{Z_\omm}+\partial Z_\omm.
\]
\end{theorem}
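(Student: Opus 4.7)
My approach is induction on $\ell(\om)$ using the Bott--Samelson tower. The Cartier-ness of $K_{X_\om}+\partial X_\om$ is the classical Ramanathan result, sharpened by Brion--Kumar and Graham--Kumar; I would take it as input from \cite{Ram1,Ram2,BrKu,GrKu} and focus on the pullback identity. The base case $\om=e$ is trivial, since both sides vanish.

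For the inductive step, write $\omm=\omm's_k$ with $\om=\om's_k$ and $\ell(\om)=\ell(\om')+1$. Diagram~\eqref{BS-diagram} exhibits $\pi_\omm\colon Z_\omm\to Z_{\omm'}$ as the base change of the $\PP^1$-fibration $\pi_k\colon G/B\to G/P_k$, so $\pi_\omm$ is itself a $\PP^1$-bundle with relative tangent bundle $f_\omm^{*}\LL{\alpha_k}$. The boundary decomposes as $\partial Z_\omm=\pi_\omm^{*}\partial Z_{\omm'}+\iota(Z_{\omm'})$, where $\iota$ is the last boundary component. On the Schubert side, since $\om'$ is the minimal length representative of its coset in $W/W_{P_k}$, the restriction $\pi_k|_{X_{\om'}}$ is an isomorphism onto $\pi_k(X_{\om'})$, and $X_\om=\pi_k^{-1}\pi_k(X_{\om'})$ is a $\PP^1$-bundle over $\pi_k(X_{\om'})$ with section $X_{\om'}$, giving the analogous decomposition $\partial X_\om=\pi_k^{*}\partial\pi_k(X_{\om'})+X_{\om'}$.

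The standard relative-canonical identity for a $\PP^1$-bundle with a section yields, on each side,
\begin{align*}
K_{Z_\omm}+\partial Z_\omm &= \pi_\omm^{*}\bigl(K_{Z_{\omm'}}+\partial Z_{\omm'}\bigr)-f_\omm^{*}\LL{\alpha_k}+\iota(Z_{\omm'}),\\
K_{X_\om}+\partial X_\om &= \pi_k^{*}\bigl(K_{\pi_k(X_{\om'})}+\partial\pi_k(X_{\om'})\bigr)-\LL{\alpha_k}|_{X_\om}+X_{\om'}.
\end{align*}
Pulling the second line back by $f_\omm$, the $\LL{\alpha_k}$ and section contributions match the first line tautologically (using $f_\omm^{-1}(X_{\om'})=\iota(Z_{\omm'})$ with multiplicity one); the first term becomes $\pi_\omm^{*}f_{\omm'}^{*}(K_{X_{\om'}}+\partial X_{\om'})$ after identifying $X_{\om'}$ with $\pi_k(X_{\om'})$, which by the inductive hypothesis equals $\pi_\omm^{*}(K_{Z_{\omm'}}+\partial Z_{\omm'})$. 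Comparing the two lines gives the claim.

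The main obstacle is that $X_{\om'}$ and $\pi_k(X_{\om'})$ are generally singular and not $\QQ$-Gorenstein, so the individual symbols $\pi_k^{*}K_{\pi_k(X_{\om'})}$ and $f_{\omm'}^{*}K_{X_{\om'}}$ are a priori ill-defined. Only the logarithmic combinations $K+\partial$ are Cartier, by the Ramanathan/Brion--Kumar input, and this is precisely what legitimizes every pullback in the inductive step. It is also why the theorem must be phrased in logarithmic form: the naive analogue $f_\omm^{*}K_{X_\om}=K_{Z_\omm}$ fails in general.
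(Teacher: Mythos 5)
Your proof is correct but takes a genuinely different route from the paper's. The paper argues directly, without induction: it cites the explicit canonical sheaf identities $\oomega_{X_\om}=I(\partial X_\om)\otimes\LL{\rho}^{-1}\otimes\C_{-\rho}$ (Graham--Kumar, Ramanathan) and $\oomega_{Z_\omm}=I(\partial Z_\omm)\otimes f_{\omm}^*\LL{\rho}^{-1}\otimes\C_{-\rho}$ (Brion--Kumar, Ramanathan), observes that the boundary $\tilde D$ of the opposite big cell meets $X_\om$ transversally, and concludes in one stroke that $K_{X_\om}+\partial X_\om=-\tilde D\cap X_\om$ and $K_{Z_\omm}+\partial Z_\omm=-f^{-1}(\tilde D\cap X_\om)$, so the Cartier-ness and the pullback identity fall out simultaneously. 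Your argument instead climbs the Bott--Samelson tower by induction on $\ell(\om)$, exploiting the compatible $\PP^1$-bundle structures on $Z_\omm\to Z_{\omm'}$ and $X_\om\to\pi_k(X_{\om'})$ together with the isomorphism $\pi_k|_{X_{\om'}}\colon X_{\om'}\xrightarrow{\sim}\pi_k(X_{\om'})$. This buys more self-containedness on the pullback side — you only import the Cartier-ness from the literature, not the full canonical sheaf formula — at the cost of heavier bookkeeping: the symbols $\pi_k^*K_{\pi_k(X_{\om'})}$ must be read as flat pullback of Weil divisor classes (legitimate because the non-Gorenstein locus has codimension $\geq 2$ and $\pi_k$ is a $\PP^1$-bundle), only the logarithmic combination is Cartier (you flag this, correctly), and one also needs the Cartier-ness of $K+\partial$ for the image Schubert variety in $G/P_k$, which your argument quietly transports through the isomorphism with $X_{\om'}$. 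I would also make explicit that $f_\omm^{-1}(X_{\om'})=\iota(Z_{\omm'})$ with multiplicity one (it follows from the fibered-product description of $Z_\omm$ and the injectivity of $\pi_k|_{X_{\om'}}$), since that equality is what makes the ``section contributions match tautologically.'' Both proofs are valid; the paper's is shorter once the cited sheaf identities are accepted.
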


 
We would like to  stress the importance of this theorem. It allows us to pull-back the divisor $K_{X_\om}+\partial X_\om$ to any resolution, and to use intersection product in order to compute characteristic classes. In addition the pull back to the Bott-Samelson resolution has strikingly simple form: all coefficients of the divisor $D=f_\omm^*(K_{X_\om}+\partial X_\om)-K_{Z_\omm}$ are equal to one.
 
\begin{proof} 
Let $\rho\in\t^*$ be half of the sum of positive roots. Let $\CC_{-\rho}$ be the trivial bundle with the  $\T$ action of weight $-\rho$ and let $\LL{\rho}=G\times_B \CC_{-\rho}$ be the line bundle associated with weight $\rho$. We denote ideal sheaves by $I(\ )$, and canonical sheaves by $\oomega_\bullet$. We have the following identities on equivariant sheaves: 
\begin{enumerate}
\item $\oomega_{X_\om}=I(\partial X_\om) \otimes \LL{\rho}^{-1} \otimes \C_{-\rho}$,
\item $\oomega_{Z_\omm}=I(\partial Z_\omm) \otimes f_{\omm}^*(\LL{\rho}^{-1}) \otimes \C_{-\rho}$.
\end{enumerate}
The first one is proved in \cite[Prop. 2.2]{GrKu} (cf. the non-equivariant version \cite[Th~4.2]{Ram2}), and the second one is proved in \cite[Prop.~2.2.2]{BrKu} (cf. the non-equivariant version \cite[Prop.~2]{Ram1}).

The boundary $\tilde{D}$ of the {\em opposite} open Schubert cell (that is $\tilde{D}=\cup X^{s_i}$) intersects $X_\om$ transversally, hence, from the sheaf identities above we obtain the divisor identities
\[
K_{X_\om}=-\partial X_\om - \tilde{D}\cap X_{\om},
\qquad
K_{Z_\omm}=-\partial Z_\omm - f^{-1}(\tilde{D}\cap X_{\om}).
\]
Hence, $K_{X_\om}+\partial X_\om$ is Cartier and, by rearrangement we obtain $K_{Z_\omm}+\partial {Z_\omm}=f^*( K_{X_\om}+\partial X_\om)$.
Note, that all the involved Weil divisors are $\T$ invariant.
\end{proof}

\subsection{The $\lambda$-twisted elliptic class of Schubert varieties}
\label{sec:twisted}

Let $\lambda\in \t^*$ be a strictly dominant weight (i.e.~belonging to the interior of the Weyl chamber) and let  
$\LL{\lambda}=G\times_B\CC_{-\lambda}$ be the associated (globally generated) line bundle over $G/B$. Then $H^0(G/B;\LL{\lambda})$ is the irreducible representation of $G$ with highest weight $\lambda$. There exists a unique (up to a constant) section $\xi_\lambda$ of $\LL{\lambda}$, which is invariant with respect to the nilpotent group $N^-\subset B^-$ and on which $\T$ acts via the character $\lambda$. Therefore $\xi_\lambda$ does not vanish at the points of the open Schubert cell $X_{\om_0}$ and its zero divisor is supported on the union of codimension one Schubert varieties. The translation $\xi_{\lambda}^\om:= \tilde \om\tilde \om_0^{-1}(\xi_\lambda)$ of this section by $\tilde w\tilde w_0^{-1}\in N(T)$ is an eigenvector of $B^-$ of weight $\om\lambda$. The zero divisor of ${\xi^\om_\lambda}_{|X_{\om}}$ is supported on $\partial X_{\om}$. The multiplicities of this zero divisor are given by the Chevalley formula. Namely, if $\om=\om' s_\alpha $, $\ell(\om)=\ell(\om')+1$ and $\alpha$ is a positive root, then the multiplicity of $X_{\om'}$ is equal to $\langle\lambda,\alpha^\vme\rangle$, where $\alpha^\vme$ is the dual root, \cite{Che} (or see \cite[Prop. 1.4.5]{BrionFlag} for the case of $\GL_n$).

\begin{example}\rm Let $G=\GL_n$, 
\begin{itemize}
\item $\lambda=(\lambda_1\geq\lambda_2\geq\dots\geq\lambda_n)$,
\item $\alpha=(0,\dots,1,\dots,-1,\dots,0)$, $\alpha^\vme=\lambda^*_i-\lambda^*_j$ for $i<j$, 
\end{itemize}
then 
$\langle\lambda,\alpha^\vme\rangle=\lambda_i-\lambda_j$. 
\end{example}

Our main object for the rest of the paper is the $\lambda$-twisted elliptic class
\begin{align*}
\Ek(X_{\om},\lambda)
=  \ELLt(X_\om, \partial X_\om-zeros(\xi_\lambda^\om);G/B),
\end{align*} 
of the Schubert variety $X_\om$, cf. Section \ref{sec:twisted}. The definition makes sense for ``sufficiently large'' $\lambda$, i.e. we need to assume that the coefficients of each boundary component in $zeros(\xi_\lambda^\om)$ is positive. It will be clear from the next section, that it is enough to assume that $\lambda$ is strictly dominant.

\section{Recursive calculation of local elliptic classes}

After using Kempf's lemma to calculate the multiplicities of $f_\omm^*(\xi^\om_\lambda)$ in Section \ref{sec:mults}, we will give a recursive formula for the local elliptic classes of the Bott-Samelson and Schubert varieties in Sections~\ref{sec:recursionZ},~\ref{sec:recursionX}.

\subsection{Multiplicities of the canonical section}\label{sec:mults}

As before, let $\omm$ be a reduced word representing $\om$, and consider the corresponding Bott-Samelson resolution. We have $f_{\omm}^*(K_{X_{\om}}+\partial X_{\om})=K_{Z_\omm}+\partial Z_\omm$ (Theorem \ref{fullflagcnonical}), and hence
\[
\Ek(X_{\om},\lambda) = f_{\omm*}\ELLt(Z_\omm,\partial Z_\omm-zeros(f_\omm^*(\xi_\lambda^\om))).
\]
Thus, to calculate $\Ek(X_{\om},\lambda)$ we need to know the multiplicities of $f_\omm^*(\xi^\om_\lambda)$ along the components of $\partial Z_\omm$.  Recall that the components of $\partial Z_\omm$ correspond to omitting a letter in the word $\omm$. Let $\partial_jZ_\omm$ denote the component corresponding to omitting the $j$'th letter of $\omm$. 
For our argument in the next section we need the following corollary drown from a sequence of papers dealing with Bott-Samelson resolutions.
\begin{proposition}\label{corinductive}Suppose $\lambda\in \t^*$ (not  necessarily dominant), then
\begin{enumerate}
\item $f^*_\omm(\LL{\lambda})\simeq \pi_\omm^*({\LL{s_k\lambda}}_{|Z_{\omm'}})\otimes\OO_{Z_\omm}(\langle \lambda,\alpha_k^\vme\rangle \iota Z_{\omm'})$,
\item if $\lambda$ is dominant, then the multiplicity of zeros of $f_\omm^*\xi^\om_\lambda$ along the divisor $\iota Z_{\omm'}$ is equal to $\langle\lambda, \alpha_k^\vme\rangle$,
\item the remaining multiplicities of $f^*_\omm \xi^\om_\lambda$ along the components of $\partial Z_\omm$ are equal to the corresponding multiplicities $f_{\omm'}^*\xi^{\om'}_{s_k\lambda}$.
\end{enumerate}
\end{proposition}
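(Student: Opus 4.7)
The three items are proved in the stated order, with (1) providing the structural line-bundle identity from which (2) and (3) follow.

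For (1), I would exploit the $\PP^1$-bundle structure of $\pi_\omm: Z_\omm \to Z_{\omm'}$. The pull-back diagram \eqref{BS-diagram} identifies $Z_\omm$ with the fibre product $Z_{\omm'}\times_{G/P_k}(G/B)$, so each fibre of $\pi_\omm$ is mapped isomorphically by $f_\omm$ to a fibre of $\pi_k: G/B\to G/P_k$. On such a fibre, $\LL{\lambda}$ has degree $\langle\lambda,\alpha_k^\vme\rangle$ (the standard weight computation on $P_k/B\simeq\PP^1$), while $\OO(\iota Z_{\omm'})$ has degree $1$ because $\iota$ is a section. Hence the twist
\[
\mathcal N := f^*_\omm \LL{\lambda}\otimes \OO\bigl(-\langle\lambda,\alpha_k^\vme\rangle\,\iota Z_{\omm'}\bigr)
\]
has degree zero on every fibre and is therefore pulled back from $Z_{\omm'}$: $\mathcal N = \pi^*_\omm \iota^*\mathcal N$. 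Combining $f_\omm\circ\iota=f_{\omm'}$ with the fact that the normal bundle of the section is the relative tangent bundle $\iota^* f^*_\omm L_k = f^*_{\omm'}\LL{\alpha_k}$ yields
\[
\iota^*\mathcal N = f^*_{\omm'}\bigl(\LL{\lambda}\otimes\LL{\alpha_k}^{-\langle\lambda,\alpha_k^\vme\rangle}\bigr) = f^*_{\omm'}\LL{s_k\lambda},
\]
which is (1).

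For (2), note that $f_\omm$ is a birational resolution of $X_\om$ whose exceptional set has image of codimension at least two. Hence $f_\omm$ is a local isomorphism near the generic point of the codimension-one divisor $\iota Z_{\omm'}$, identifying it with a neighbourhood of the generic point of the codimension-one Schubert subvariety $X_{\om'}\subset X_\om$ (with $\om=\om's_k$, $\ell(\om)=\ell(\om')+1$). The multiplicity of $f^*_\omm\xi^\om_\lambda$ along $\iota Z_{\omm'}$ therefore equals the multiplicity of $\xi^\om_\lambda|_{X_\om}$ along $X_{\om'}$, which is $\langle\lambda,\alpha_k^\vme\rangle$ by the Chevalley formula recalled in Section~\ref{sec:twisted}.

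For (3), let $\tau_0$ be the canonical $T$-invariant section of $\OO(\iota Z_{\omm'})$ cutting out $\iota Z_{\omm'}$ with multiplicity one. Using (1) together with the exact vanishing order from (2), the pull-back factors uniquely as
\[
f^*_\omm\xi^\om_\lambda = \pi^*_\omm(\sigma')\otimes\tau_0^{\langle\lambda,\alpha_k^\vme\rangle}
\]
for some $T$-equivariant section $\sigma'$ of ${\LL{s_k\lambda}}_{|Z_{\omm'}}$. Since $\partial_j Z_\omm=\pi^{-1}_\omm\partial_j Z_{\omm'}$ for $j<\ell(\om)$, the remaining multiplicities of $f^*_\omm\xi^\om_\lambda$ along the boundary agree with those of $\sigma'$ along $\partial Z_{\omm'}$. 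It therefore suffices to identify $\sigma'$ with $f^*_{\omm'}\xi^{\om'}_{s_k\lambda}$ up to a nonzero scalar. Both are $T$-equivariant sections of $f^*_{\omm'}\LL{s_k\lambda}$ of the same weight $\om\lambda=\om' s_k\lambda$ with zero divisor supported on $\partial Z_{\omm'}$, and both descend via the birational $f_{\omm'}$ (by rational singularities of $X_{\om'}$ and the projection formula) to $T$-eigenvectors in $H^0(X_{\om'},\LL{s_k\lambda})$. The main obstacle is the last step: one has to verify that the $\om\lambda$-weight space of $H^0(X_{\om'},\LL{s_k\lambda})$ is one-dimensional, forcing proportionality. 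This is delicate because $s_k\lambda$ need not be dominant, so the argument invokes the Demazure character formula on $X_{\om'}$, together with the right normalization of the equivariant structure on $\OO(\iota Z_{\omm'})$ so that $\tau_0$ has weight zero; this is the same bookkeeping of sheaf identities from \cite{BrKu, GrKu, Ram1, Ram2} that underlies Theorem~\ref{fullflagcnonical}.
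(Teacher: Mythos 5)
Your proof takes a genuinely different and more self-contained route than the paper's. The paper introduces the fibre product $Y=X_{\om'}\times_{G/P_k}G/B$ and invokes Kempf's lemma (\cite{Kem}, Lemma 3) as a black box: its part (i) gives the line-bundle isomorphism, hence item (1) by pullback along $Z_\omm\to Y$, and its part (ii) --- existence and uniqueness of a $B^-$-invariant section of $\pi_Y^*(\LL{s_k\lambda}|_{X_{\om'}})$ --- yields items (2) and (3). You bypass Kempf entirely. Your fibre-degree computation on the $\PP^1$-bundle $\pi_\omm$, restricted to the section $\iota$ via $N_{\iota Z_{\omm'}/Z_\omm}=\iota^*f_\omm^*L_k=f_{\omm'}^*\LL{\alpha_k}$, is a correct self-contained proof of (1) --- in effect a direct proof of Kempf's (i) on the Bott--Samelson side. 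Your proof of (2), via the local isomorphism of $f_\omm$ near the generic point of $\iota Z_{\omm'}$ (legitimate because $f_\omm$ is proper birational from the smooth $Z_\omm$ to the normal $X_\om$, so its non-isomorphism locus in $X_\om$ has codimension $\geq 2$) combined with Chevalley on $X_\om$, is also a correct alternative to the paper's.

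For item (3), however, you have the gap you flag yourself: you must match $\sigma'$ with $f_{\omm'}^*\xi^{\om'}_{s_k\lambda}$, and your proposed fill --- one-dimensionality of a weight space of $H^0(X_{\om'},\LL{s_k\lambda})$ via the Demazure character formula --- is heavier than necessary and delicate precisely because $s_k\lambda$ need not be dominant. The paper closes this point using Kempf's part (ii). There is a lighter fix within your own setup: you have already observed that $\sigma'$ and $f_{\omm'}^*\xi^{\om'}_{s_k\lambda}$ are both regular sections of $f_{\omm'}^*\LL{s_k\lambda}$ with zero divisor supported on $\partial Z_{\omm'}$, i.e.\ both are nowhere vanishing on $Z_{\omm'}\setminus\partial Z_{\omm'}\simeq X^\circ_{\om'}\simeq\AA^{\ell(\om')}$. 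Their ratio is therefore a nowhere-vanishing regular function on affine space, hence a nonzero constant; proportionality on this dense open forces it globally, and no character-formula bookkeeping (indeed not even the $T$-weight matching) is required. With that substitution your argument for (3) closes.
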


\begin{proof}  Suppose $\omm=\omm's_k$ is a reduced word. Let $Y=X_{\om'}\times_{G/P}G/B$.  We have the commutative diagram
\[\xymatrixcolsep{3pc}
\xymatrix{Z_\omm\ar[d]^{\pi_{\omm}}\ar[r]{}\ar@/^1pc/[rr]^{f_\omm}&
Y\ar[d]^{\pi_Y}\ar[r]_{f_Y\phantom{xxx}}& G/B \ar[d]^{\pi_k}\\
Z_{\omm'}
\ar[r]^{ f_{\omm'}\phantom{aa}}&X_{\om'}
\ar[r]^{}& G/P_k, \\}
\]
together with a section $\iota':X_{\omm'}\to Y$ which agrees with the section $\iota:Z_{\omm'}\to Z_\omm$. Recall the following lemma of Kempf (originating from the Chevalley's work).

\begin{lemma}[\cite{Kem} Lemma 3]\label{kempf} Suppose $\lambda\in \t^*$ (not  necessarily dominant), then
\begin{enumerate}[(i)]
\item $f^*_Y(\LL{\lambda})\simeq \pi_Y^*({\LL{s_k\lambda}}_{|X_{\om'}})\otimes\OO_Y(\langle \lambda,\alpha_k^\vme\rangle \iota'X_{\om'})$.
\item If $f_Y^*(\LL{\lambda})$ has a non-zero section, then so does $\pi_Y^*({\LL{s_k\lambda}}_{|X_{\om'}})$. Furthermore, there exists a non-zero section, which is invariant with respect to $B^-$. 
\end{enumerate}
\end{lemma}

To continue the proof note that 
(1) follows directly from (i).	
The bundle $f_\omm^*( \LL{\lambda})$ is isomorphic to $\OO_{Z_\omm}(\sum_{k=1}^{\ell(\om)} a_k \partial_kZ_\omm)$. 
The section in (ii) is unique and it is equal to $f_Y^*\xi^\omega_\lambda$.
The multiplicity corresponding to the last component $\partial_{\ell(\om)}Z_\omm=\iota(Z_{\omm'})$ is equal to $\langle \lambda,\alpha_k^\vme\rangle$, the remaining components are pulled back from $Z_{\omm'}$. This proves (2) and (3).
\end{proof}

\noindent It is immediate to show inductively the Chevalley formula for the multiplicities in the Bott-Samelson variety.

\begin{corollary}\label{lastword}
Let $\omm=\omm_1s_{k_j}\omm_2$ where $s_{k_j}$ is the $j$'th letter in the word $\omm$. 
Write $\om= \om_1 \om_2s_\alpha$, where $s_\alpha=\om_2^{-1}s_{k_j}\om_2$ and $\alpha$ is a positive root. Then 
the multiplicity of $f_\omm^*(\xi^\om_\lambda)$ along the boundary component $\partial_j Z_\omm$ is $\langle\lambda,\alpha^\vme\rangle$.\qed
\end{corollary}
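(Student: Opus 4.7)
The plan is to induct on $\ell(\om) - j$, the length of the suffix $\omm_2$. The base case $\omm_2 = \emptyset$ is immediate: then $s_{k_j}$ is the last letter of $\omm$, so writing $\omm = \omm' s_{k_j}$ the boundary component $\partial_j Z_\omm$ coincides with the image $\iota Z_{\omm'}$ of the section, while $\om_2 = e$ and $\alpha = \alpha_{k_j}$. Proposition~\ref{corinductive}(2) directly gives multiplicity $\langle \lambda, \alpha_{k_j}^\vme \rangle = \langle \lambda, \alpha^\vme \rangle$.

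For the inductive step with $\omm_2$ nonempty, I would write $\omm = \omm' s_k$, where $s_k$ is the last letter, so that $\omm' = \omm_1 s_{k_j} \omm_2'$ and $\omm_2 = \omm_2' s_k$. Since $j < \ell(\om)$, the divisor $\partial_j Z_\omm$ is pulled back from the corresponding component of $\partial Z_{\omm'}$; Proposition~\ref{corinductive}(3) therefore equates the multiplicity of $f_\omm^*(\xi^\om_\lambda)$ along $\partial_j Z_\omm$ with the multiplicity of $f_{\omm'}^*(\xi^{\om'}_{s_k \lambda})$ along $\partial_j Z_{\omm'}$. The inductive hypothesis applied to the triple $(\omm', \omm_2', s_k \lambda)$ then identifies this with $\langle s_k \lambda, (\alpha')^\vme \rangle$, where $\alpha' > 0$ is the root satisfying $s_{\alpha'} = (\om_2')^{-1} s_{k_j} \om_2'$.

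Using $\om_2 = \om_2' s_k$ one computes $s_\alpha = \om_2^{-1} s_{k_j} \om_2 = s_k s_{\alpha'} s_k$, which forces $\alpha = \pm s_k(\alpha')$. Both $\alpha$ and $\alpha'$ are positive (by the corollary's hypothesis and by induction, respectively), and the alternative $\alpha' = \alpha_k$ would give $\alpha = -\alpha_k$, contradicting positivity; hence $\alpha = s_k(\alpha')$, and the $W$-invariance of the pairing between weights and coroots yields
$\langle s_k \lambda, (\alpha')^\vme \rangle = \langle \lambda, s_k(\alpha')^\vme \rangle = \langle \lambda, \alpha^\vme \rangle$, as required.

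The step that needs the most care is precisely this sign-tracking, i.e.\ deciding which of $\pm s_k(\alpha')$ equals $\alpha$. A clean conceptual justification, which simultaneously confirms the hypothesis that each $\alpha$ is a positive root, is that the family $\{\om_2^{-1}(\alpha_{k_j})\}_j$ indexed by the positions of $\omm$ coincides with the inversion set of $\om^{-1}$ because $\omm$ is reduced; all its members are therefore positive roots at once, forcing the correct sign at every step of the induction.
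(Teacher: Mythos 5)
Your induction is precisely the argument the paper calls ``immediate'' in the sentence preceding Corollary~\ref{lastword}: apply item~(2) of Proposition~\ref{corinductive} for the last component and item~(3) to push the remaining ones back to $\omm'$ with the weight reflected by $s_k$, then unwind. So the approach is the paper's own, and the proof is correct.

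One small remark on the sign bookkeeping: the sentence ``the alternative $\alpha' = \alpha_k$ would give $\alpha = -\alpha_k$, contradicting positivity'' is circular as written, since $\alpha$ is \emph{defined} to be the positive root representing the reflection $\om_2^{-1} s_{k_j}\om_2$; if $s_k(\alpha')$ happened to be negative you would simply take $\alpha=-s_k(\alpha')$, and the contradiction you would then run into is $\langle\lambda,\alpha^\vme\rangle=-\langle s_k\lambda,(\alpha')^\vme\rangle$ for all $\lambda$, not a positivity failure. What actually closes the gap is exactly your last paragraph: since $\omm$ is reduced, the roots $\om_2^{-1}(\alpha_{k_j})$ (with $\om_2$ the suffix after position $j$) are the $\ell(\om)$ distinct positive roots inverted by $\om$, so each $\alpha$ is automatically positive, $\alpha=s_k(\alpha')$ holds with the plus sign, and $W$-equivariance of the pairing finishes the induction. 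So the ``conceptual justification'' is not optional polish---it is the load-bearing step---and with it the proof is complete.
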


\subsection{Local elliptic classes of Bott-Samelson and Schubert varieties} \label{sec:localBSS}
Recall from Section~\ref{sec:fixZ} that the $\T$ fixed points of $G/B$ are identified with elements of $W$, and the $\T$ fixed points of $Z_\omm$ are parameterized by subwords of $\omm$ or equivalently by 01-sequences. For $\sigma \in (G/B)^{\T}$, $x\in Z_\omm^{\T}$ define the local classes
\begin{align*}
E_\sigma(X_\omm,\lambda)=&\frac{\Ek(X_\omm,\lambda){|\sigma}}{\eekk(T_\sigma (G/B))}=\frac{  \ELLt(X_\om, \partial X_\om-zeros(\xi^w_\lambda)){|\sigma}}{\eekk(T_\sigma (G/B))}  \ \ \ \ \ \in Frac(\RR(\T)[h^{1/N}][[q]]),\\
E_{x}(Z_\omm,\lambda)=&\frac{  \ELLt(Z_\omm,\partial Z_\omm-zeros(f_\omm^*(\xi^w_\lambda))){|x} }{\eekk(T_{x}Z_\omm)}  \ \ \ \ \   \ \ \ \ \   \in Frac(\RR(\T)[h^{1/N}][[q]])
\end{align*}
in the fraction field of the representation ring $\RR(\T)$ extended by  formal formal roots of the parameter $h$.
If $\om=\id\in W$ then $\omm=\emptyset$. The Bott-Samelson variety $Z_\emptyset$ is one point, a fixed point indexed by the sequence of length 0. Hence we have $f_\emptyset(Z_\emptyset)=[\id]\in G/B$ and 
\begin{equation}\label{eq:base}
E_{\emptyset}(Z_\emptyset,\lambda)=1, \qquad E_{[\id]}(X_{\id},\lambda)=1.
\end{equation}
In the next two subsections we show how the geometry described in Section \ref{BoSa}  implies recursions of the local classes, that together with the base step \eqref{eq:base} determine them. 

\subsection{Recursion for local elliptic classes of Bott-Samelson varieties}\label{sec:recursionZ}

From the description of the fixed points of $Z_\omm$ and their tangent characters in Section \ref{sec:fixZ} we obtain the following recursion for the local classes on $Z_\omm$. Let $\omm=\omm' s_k$ be a reduced word. 

If $x=(x',0)\in Z^\T_\omm$ then
\[
E_x(Z_\omm,\lambda)= E_{x'}(Z_{\omm'},s^\lambda_k\lambda) \cdot \delta\left((L_k)_{f_{\omm'}(x')},h^{\langle\lambda,\alpha_k^\vme\rangle}\right).
\]

If $x=(x',1)\in Z^\T_\omm$ then
\[
E_x(Z_\omm,\lambda)=E_{x'}(Z_{\omm'},s^\lambda_k\lambda)\cdot \delta\left((L_k^{-1})_{f_{\omm'}(x')},h\right).
\]
As before, in our notation we identify a bundle restricted to a fixed point with the character of the obtained $\T$ representation on $\C$. Note that in the formula for $x=(x',1)$ the character $(L_k^{-1})_{f_{\omm'}(x')}$ is equal to $(L_k)_{f_{\omm}(x)}$.

Recall that the classes $E_x(Z_\omm,\lambda)$ are defined only for strictly dominant weights $\lambda$. However, the formulas above are meromorphic functions in $\lambda$ (with poles on the hyperplanes $z\langle \lambda, \alpha_k^\vme\rangle\in \ZZ$), so we formally define $E_x(Z_\omm,\lambda)$ for all $\lambda\in \t^*$ by the meromorphic function it satisfies for strictly dominant~$\lambda$. 

\begin{example} \rm 
For $G=\GL_3$ we use the notation $\lambda=(\lambda_1,\lambda_2,\lambda_3)$ as before, and let $\mu_k=\mu_k(\lambda)=h^{-\lambda_k}$. 
\begin{itemize}
\item For $\omm=\emptyset$ we have $E_\emptyset(Z_\omm,\lambda)=1$.
\item For $\omm=s_1$ we have 
\begin{align*}
E_{(0)}(Z_{s_1},\lambda)=& 1_{|\mu_1 \leftrightarrow \mu_2} \cdot \delta(z_2/z_1,\mu_2/\mu_1)=\delta(z_2/z_1,\mu_2/\mu_1) \\
E_{(1)}(Z_{s_1},\lambda)=& 1_{|\mu_1 \leftrightarrow \mu_2} \cdot \delta(z_1/z_2,h)=\delta(z_1/z_2,h).
\end{align*}
\item For $\omm=s_2s_1$ we have 
\begin{align*}
E_{(0,0)}(Z_{s_1s_2},\lambda)=& E_{(0)}(Z_{s_1},\lambda)_{|\mu_2 \leftrightarrow \mu_3} \cdot \delta(z_3/z_2,\mu_3/\mu_2) \\
                                            =& \delta(z_2/z_1,\mu_3/\mu_1)\delta(z_3/z_2,\mu_3/\mu_2), \\
E_{(0,1)}(Z_{s_1s_2},\lambda)=& E_{(0)}(Z_{s_1},\lambda)_{|\mu_ 2\leftrightarrow \mu_3} \cdot \delta(z_2/z_3,h) \\
                                            =& \delta(z_2/z_1,\mu_3/\mu_1)\delta(z_2/z_3,h), \\
E_{(1,0)}(Z_{s_1s_2},\lambda)=& E_{(1)}(Z_{s_1},\lambda)_{|\mu_2 \leftrightarrow \mu_3} \cdot \delta(z_3/z_1,\mu_3/\mu_2) \\
                                            =& \delta(z_1/z_2,h)\delta(z_3/z_1,\mu_3/\mu_2), \\
E_{(1,1)}(Z_{s_1s_2},\lambda)=& E_{(1)}(Z_{s_1},\lambda)_{|\mu_ 2\leftrightarrow \mu_3} \cdot \delta(z_1/z_3,h)\\
                                            =&  \delta(z_1/z_2,h)\delta(z_1/z_3,h).
\end{align*}
\end{itemize}
\end{example}

\subsection{Recursion for local elliptic classes of Schubert varieties}\label{sec:recursionX} 
Let $\omm$ represent $\om\in W$. According to Proposition \ref{prop:localization_general} we have
\begin{equation}\label{eq:Esum}
E_\sigma(X_\om,\lambda)=\sum_{x} E_{x}(Z_\omm,\lambda),
\end{equation}
where the summation runs for fixed points $x$ with $f_\omm(x)=\sigma$. 

For example, for $G=\GL_3$ let $\omm=s_1s_2s_1$ and let $x$ be the fixed point corresponding to the identity permutation. Then the summation has two summands, corresponding to the 01 sequences (fixed points) $(0,0,0)$ and $(1,0,1)$. In fact for this $\omm$ only two fixed points ($\id$ and $s_1$) are such that the summation has two terms; in the remaining cases there is only one term, see the table in Example \ref{ex:table}.

The recursion of Section \ref{sec:recursionZ} for the terms of the right hand side of \eqref{eq:Esum} implies a recursion for the $E_\sigma(X_\om,\lambda)$ classes: the initial step is
\[
E_\sigma(X_{id},\lambda)=\begin{cases}1&\text{if } \sigma=\id\\0&\text{if } \sigma\neq id,\end{cases}
\]
and for $\om=\om' s_k$, $\ell(\om)=\ell(\om')+1$ we have
\begin{equation}\label{eq:Xrecursion}
E_\sigma(X_{\om},\lambda)= E_\sigma(X_{\om'},s_k(\lambda)) \cdot \delta(L_{k,\sigma},h^{\langle\lambda,\alpha_k^\vme\rangle}) 
+  E_{\sigma s_k}(X_{\om'},s_k(\lambda)) \cdot \delta(L_{k,\sigma},h).
\end{equation}

\begin{example} \label{GL3recu}\rm
Let $G=\GL_3$ and assume we already calculated the fixed point restrictions of $\Ek(X_{312},\lambda)$. By applying  the recursion above for $\om=\om's_2=(s_2s_1)s_2$ we obtain
\begin{align}
\notag E_{123}(X_{321},\lambda)=&
E_{123}(X_{312},\lambda)_{|\mu_2\leftrightarrow \mu_3}\delta(z_3/z_2,\mu_3/\mu_2)+
E_{213}(X_{312},\lambda)_{|\mu_2\leftrightarrow \mu_3}\delta(z_3/z_2,h)
\\ \label{4term1}
 =&
\delta(z_3/z_2,\mu_2/\mu_1)\delta(z_2/z_1,\mu_3/\mu_1)\delta(z_3/z_2,\mu_3/\mu_2) \\
& \ \hskip 4 true cm +\delta(z_2/z_3,h)\delta(z_3/z_1,\mu_3/\mu_1)\delta(z_3/z_2,h). \notag
\end{align}
We may calculate the same local class using the recursion for $\om=\om's_1=(s_1s_2)s_1$, and we obtain
\begin{align}  \label{4term2}
 E_{123}(X_{321},\lambda)=&
\delta(z_2/z_1,\mu_3/\mu_2)\delta(z_3/z_2,\mu_3/\mu_1)\delta(z_2/z_1,\mu_2/\mu_1) \\
& \ \hskip 4 true cm +\delta(z_1/z_2,h)\delta(z_3/z_1,\mu_3/\mu_1)\delta(z_2/z_1,h). \notag
\end{align}
The equality of the expressions \eqref{4term1} and  \eqref{4term2} is a non-trivial four term identity for theta functions,
for more details see Section \ref{eg:FL3}.
\end{example}

\medskip

Now we are going to rephrase the recursion \eqref{eq:Xrecursion} in a different language. According to K theoretic equivariant localization theory, the map
\[
K_\T(G/B) \to \bigoplus_{\sigma \in W} K_\T(x_\sigma)=\bigoplus_{\sigma \in W} \RR(\T),
\]
whose coordinates are the restriction maps to $x_\sigma$, is injective. As before (cf. Sections \ref{sec:localization}, \ref{sec:localBSS})  
we divide the restriction by the K theoretic Euler class, and consider the map
\[
\res: K_\T(G/B)\to \bigoplus_{\sigma \in W} Frac(\RR(\T)), \qquad\qquad 
\res(\beta)=\left\{ \frac{ \beta_{|x_\sigma}}{\eekk(T_{x_\sigma}(G/B))} \right\}_{\sigma \in W}
\]
where $Frac(\RR(\T))$ is the fraction field of the representation ring $\RR(\T)$. Since the map is injective, we may identify an element of $K_\T(G/B)$ with its $\res$-image, i.e. with a tuple of elements from of $Frac(\RR(\T))$.
Let us note that $E_\sigma(X_\om,\lambda)$ is expressed by factors $\delta(z,h)$ and $\delta(z,h^{\langle\lambda, \alpha^\vme\rangle})$ where $z$ is a character of the maximal torus and $\alpha^\vme$ is a coroot. Let $E_\sigma(X_\om)$ denote the associated function on $\lambda\in \t^*$. 
In the basis of simple coroots $\{\alpha^\vme_i\}$ the function $E_\sigma(X_\om)$ can be expressed by the functions $\mu_i(\lambda)=h^{-\langle \lambda,\alpha^\vme\rangle}=\eee^{ z\langle \lambda,\alpha^\vme\rangle}$. 
The variables $\mu_i$ are functions on the torus $\T^*=\t^*/z\t^*_\ZZ$.
Therefore we can treat the restricted classes as elements of the following objects
$$E_\sigma(X_\om)\in Frac(\RR(\T\times \T^*\times \C^*))[[q]]\,.$$

\begin{theorem}[Main Theorem]\label{th:mainiduction} Regarding the classes $E_\bullet(X_\om)$ as elements of $$\mathcal M=\oplus_{\sigma \in W} Frac\big(\RR({\T\times \T^*\times \C^*})\big)[[q]]\,,$$ the following recursion holds:
\begin{equation}\label{Eini}
E_\sigma(X_{\id},\lambda)=\begin{cases} 1 & \sigma=\id \\ 0 & \sigma\not=\id; \end{cases}
\end{equation}
and for $\om=\om's_k$ with $\ell(\om)=\ell(\om')+1$ we have 
\[
E_\bullet(X_{\om},\lambda)=(\delta^{bd}_k\id+\delta^{int}_ks^\gamma_k) \left(E_\bullet(X_{\om'},s_k\lambda)\right).
\]
Here 
\begin{itemize}
\item $s_k^\gamma$ for $s_k\in W$ acts on the fixed points by right translation $\sigma\mapsto \sigma s_k$,
\item $s_k$ acts  on $\lambda\in\t^*$, later this action will be denoted by $s^{\mu}_k$,
\item $\delta^{bd}_k$ --- multiplication by the element $\delta(L_k,h^{\langle\alpha_k^\vme,\lambda\rangle})$ (the ``boundary factor'')
\item $\delta^{int}_k$ --- multiplication by the element $\delta(L_k,h)$ (the ``internal factor'').
\end{itemize}
\end{theorem}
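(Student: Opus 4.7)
The plan is to deduce the theorem as a direct repackaging of the recursion \eqref{eq:Xrecursion} derived in Section~\ref{sec:recursionX}, combined with a trivial base case. The initial condition \eqref{Eini} is immediate: when $\om=\id$ the Schubert variety $X_\id$ is the single reduced $\T$-fixed point $x_\id$, with empty boundary and with $\xi^\id_\lambda$ nonvanishing there, so $\Ek(X_\id,\lambda)$ is the structure-sheaf class of $x_\id$ in $K_\T(G/B)$; restricting to $x_\sigma$ and dividing by $\eekk(T_{x_\sigma}(G/B))$ yields $\delta_{\sigma,\id}$.

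For the inductive step I would fix a reduced decomposition $\omm=\omm's_k$ of $\om$ and proceed in three moves. First, apply Proposition~\ref{prop:localization_general} to the Bott--Samelson resolution $f_\omm\colon Z_\omm\to X_\om$ to write $E_\sigma(X_\om,\lambda)=\sum_x E_x(Z_\omm,\lambda)$ over $x\in f_\omm^{-1}(x_\sigma)$. Second, invoke Section~\ref{sec:fixZ}: these preimages are exactly the pairs $(x',0)$ with $f_{\omm'}(x')=x_\sigma$ and the pairs $(x',1)$ with $f_{\omm'}(x')=x_{\sigma s_k}$. Third, apply the Bott--Samelson recursion of Section~\ref{sec:recursionZ}: the last step of the inductive construction contributes a factor $\delta((L_k)_{f_{\omm'}(x')},h^{\langle\lambda,\alpha_k^\vme\rangle})$ at each $(x',0)$ and $\delta((L_k^{-1})_{f_{\omm'}(x')},h)$ at each $(x',1)$, with the nested factor being $E_{x'}(Z_{\omm'},s_k\lambda)$, using the multiplicity computation of Proposition~\ref{corinductive}(2),(3) and the fixed-point formula~\eqref{fixed}. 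Since the two $\T$-fixed points of the $\PP^1$-fiber of $\pi_k$ through $x_\sigma$ carry opposite characters, $(L_k^{-1})_{x_{\sigma s_k}}$ equals $(L_k)_{x_\sigma}=L_{k,\sigma}$, so the two new factors become $\delta(L_{k,\sigma},h^{\langle\lambda,\alpha_k^\vme\rangle})$ and $\delta(L_{k,\sigma},h)$ respectively. Summing $E_{x'}(Z_{\omm'},s_k\lambda)$ over the appropriate fixed points $x'\in Z_{\omm'}^\T$ reassembles, via another application of Proposition~\ref{prop:localization_general} to the resolution $f_{\omm'}$, into $E_\sigma(X_{\om'},s_k\lambda)$ and $E_{\sigma s_k}(X_{\om'},s_k\lambda)$, which is exactly the operator identity of the theorem.

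The main technical obstacle is not this short bookkeeping but the earlier content hidden in Proposition~\ref{corinductive}, which rests on Kempf's lemma and on Theorem~\ref{fullflagcnonical} for the pullback of $K_{X_\om}+\partial X_\om$, together with the fixed-point localization formula~\eqref{fixed}. Once those inputs are in place, the Main Theorem is essentially a clean rephrasing of~\eqref{eq:Xrecursion} as an operator equation on $\mathcal M$, with the only genuinely new ingredient being the identification of $(L_k^{-1})_{x_{\sigma s_k}}$ with $L_{k,\sigma}$ that makes the second summand depend on $\sigma$ rather than on $\sigma s_k$.
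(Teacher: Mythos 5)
Your proof is correct and follows the same route as the paper: the paper's own proof is the one-line remark that the theorem is a rewriting of the local recursion~\eqref{eq:Xrecursion}, and your argument simply unpacks how~\eqref{eq:Xrecursion} itself is obtained from the Bott--Samelson fixed-point recursion of Section~\ref{sec:recursionZ}, the localization formula of Proposition~\ref{prop:localization_general}, and the observation $(L_k^{-1})_{x_{\sigma s_k}}=L_{k,\sigma}$. No genuinely different ideas are introduced, so this is essentially the paper's proof made explicit.
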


Note that the boundary and internal factors indeed make sense: restricted to a fixed point $x_\sigma$ the line bundle $L_k$ is a $\T$ character depending on $\sigma$; that  is, multiplication by one of these factors means multiplication by a diagonal matrix, not by a constant matrix.

\begin{proof} The statement is the rewriting of the recursion \eqref{eq:Xrecursion}. \end{proof}

\section{Hecke algebras}\label{sec:Hecke}

In this section we review various Hecke-type actions on cohomology or K-theory of $G/B$ giving rise to inductive formulas for various invariants of the Schubert cells.
Sections \ref{sec:HeckeNil}--\ref{sec:HeckeMC}---exploring the relation between our elliptic classes and other characteristic classes of singular varieties---is not necessary for the rest of the paper. A reader not familiar with Chern-Schwartz-MacPherson or motivic Chern classes can jump to Section \ref{sec:HeckeEll}. 

\subsection{Fundamental classes---the nil-Hecke algebra} \label{sec:HeckeNil}

Consider the notion of {\em equivariant fundamental class in cohomology}, denoted by $[\ ]$. According to \cite{BGG, Dem} if $\om=\om's_k$, $\ell(w)=\ell(\om')+1$ then the Demazure operation
$D_k=\pi^*_k\circ {\pi_k}_*$ in cohomology satisfies
$$D_k([X_{\om'}])=[X_{\om}]\,, \qquad D_k\circ D_k=0.$$
The algebra generated by the operations $D_k$ is called the nil-Hecke algebra.
As before, let us identify elements of $H^*_\T(G/B)$ with their $\res$-image, where 
\[
\res:H^*_\T(G/B)\to \bigoplus_{\sigma\in W} \QQ(\t), \qquad \qquad 
\beta \mapsto \left\{\frac{\beta_{|x_\sigma}}{\eehh(T_{x_\sigma}(G/B))}\right\}_{\sigma\in W}.
\]
Here $\QQ(\t)$ is the field of rational functions on $\t$, and $\eehh(\ )$ is the equivariant {\em cohomological} Euler class.

The action of the Demazure operations on the right hand side is given by the formula
\[
D_k=\tfrac 1{c_1(L_k)}(\id+s^\gamma_k),\qquad \text{that is} \qquad
D_k(\{f_\bullet\})_\sigma=
 \tfrac 1{c_1(L_k)}_{\sigma}(f_\sigma+f_{\sigma s_k}).\]
The operators $D_k$ satisfy the braid relations and $D_k\circ D_k=0$.

For $G=\GL_n$ we have $c_1(L_k)_{\sigma}=z_{\sigma(k+1)}-z_{\sigma(k)}$
(where $z_1,z_2,\dots,z_n$ are the basic weights of $\T\leq \GL_n$) and we recover the divided difference operators from algebraic combinatorics.

\subsection{CSM-classes and the group ring $\ZZ[W]$} An important one-parameter deformation of the notion of cohomological (equivariant) fundamental class is the equivariant Chern-Schwartz-MacPherson (CSM, in notation $c^{sm}(-)$) class. For introduction to this cohomological characteristic class see, e.g. \cite{Oh, WeCSM, AlMi, FR, AMSS}.

It is shown in \cite{AlMi, AMSS} that the CSM classes of Schubert cells satisfy the recursion:  if $\om=\om's_k$, $\ell(w)=\ell(\om')+1$ then
$$A_k(c^{sm}(X^\circ_{\om'}))=c^{sm}(X^\circ_{\om})$$
where
$$A_k=(1+c_1(L_k))D_k-\id\,.$$
In terms of $\res$-images  
$$A_k(\{f_\bullet\})_\sigma=
\tfrac 1{c_1(L_k)_{\sigma}} f_\sigma+ \tfrac {1+c_1(L_k)_{\sigma}}{c_1(L_k)_{\sigma}} f_{\sigma s_k}.$$ 
By \cite{AlMi} or by straightforward calculation we find that $A_k\circ A_k=\id$ and the operators $A_k$ satisfy the braid relations.

\subsection{Motivic Chern classes---the Hecke algebra}\label{sec:HeckeMC}
The K theoretic counterpart of the notion of CSM class is the {\em motivic Chern class} (in notation $mC_y(-)$), see \cite{BSY, FRW, AMSS2}. 
The operators 
\[
B_k=(1+y L_k^{-1})\pi_k^*{\pi_k}_*-\id\;\in\;End(K_\T(G/B)[y]), .
\]
(see Section \ref{sec:BSres}) reproduce the motivic Chern classes $mC_y$ of the Schubert cells:  if $\om=\om's_k$, $\ell(w)=\ell(\om')+1$ then
$B_k(mC_y(X^\circ_{\om'}))=mC_y(X^\circ_{\om})$, see \cite{AMSS2}, c.f.~\cite{SZZ}. 
In the local presentation, i.e. after restriction to the fixed points and division by the K-theoretic Euler class,  
the operator $B_k$ takes form
$$B_k(\{f_\bullet\})_{\sigma}= 
 \frac{(1+y)(L_k^{-1})_{\sigma}}{1-(L_k^{-1})_{\sigma}} f_{\sigma}
+ \frac{1+y(L_k^{-1})_{\sigma}}{1-(L_k^{-1})_{\sigma}} f_{\sigma s_k}.$$
For example, for $G=\GL_n$ we have $(L_k)^{-1}_\sigma={z_{\sigma(k)}}/{z_{\sigma(k+1)}}$.
The squares of the operators satisfy 
$$B_k\circ B_k=-(y+1) B_k-y\, id,$$ 
and  the operators $B_k$ satisfy the braid relations. This kind of algebra was discovered much earlier by Lusztig \cite{Lusztig}.

\subsection{Elliptic Hecke algebra} \label{sec:HeckeEll}
Consider the operator 
$$C_k=(\delta^{bd}_k\id+\delta^{int}_ks^\gamma_k)s^{\mu}_k$$
acting on the direct sum of the spaces of rational functions extended by the formal parameter $q$ $$\oplus_{\sigma \in W} Frac\big(\RR(\T\times \T^*\times\C^*)\big)[[q]]\,,$$  or in coordinates
$$C_k(\{f_\bullet\})_\sigma(\lambda)=(\delta_k^{bd})_{\sigma}\, f_\sigma(s_k\lambda) +(\delta_k^{int})_{\sigma}\,f_{\sigma s_k}(s_k\lambda)\,.$$
In Section \ref{sec:recursionX} we have shown that  if $\om=\om's_k$, $\ell(w)=\ell(\om')+1$ then
$$E_\bullet(X_{\om},\lambda)=C_k \left(E_\bullet(X_{\om'},\lambda)\right).$$

\begin{theorem}\label{th:kappa}
The square of the operator $C_k$ is multiplication by a function depending only on $\lambda$ and $h$:
$$C_k\circ C_k=\kappa_k(\lambda)\,id\,,$$
where
$$\kappa_k(\lambda)=\delta(h,\nu_k(\lambda))\delta(h,1/\nu_k(\lambda))
\,,$$
where $\nu_k(\lambda)=h^{\langle\lambda,\alpha_k^\vme\rangle}$.
\end{theorem}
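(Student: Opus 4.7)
The plan is to compute $C_k\circ C_k$ by a direct expansion on an arbitrary tuple $\{f_\sigma\}\in\mathcal M$, separate the result into its ``diagonal'' and ``off-diagonal'' parts (the coefficients of $f_\sigma(\lambda)$ and of $f_{\sigma s_k}(\lambda)$), and then verify the off-diagonal part vanishes while the diagonal part equals $\kappa_k(\lambda)$ independently of $\sigma$.

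First I would unpack the composition. Writing $A=A_\sigma:=(L_k)_{x_\sigma}$ and $\nu=\nu_k(\lambda)=h^{\langle\lambda,\alpha_k^\vme\rangle}$, one application of $C_k$ gives
\[
(C_kf)_\sigma(\lambda)=\delta(A,\nu)\,f_\sigma(s_k\lambda)+\delta(A,h)\,f_{\sigma s_k}(s_k\lambda).
\]
Applying $C_k$ once more and using both $s_k^2=\id$ on $W$ and on $\t^*$ (so in particular $s_k^\mu\nu=\nu^{-1}$), together with the character-level identity $(L_k)_{\sigma s_k}=A_\sigma^{-1}$ (which follows from $s_k\alpha_k=-\alpha_k$ and the fact that the fiber of $\LL{\alpha_k}$ at $x_\sigma$ carries the character $-\sigma\alpha_k$), one obtains
\[
(C_k^2f)_\sigma(\lambda)=\bigl[\delta(A,\nu)\delta(A,\nu^{-1})+\delta(A,h)\delta(A^{-1},h)\bigr]f_\sigma(\lambda)+\delta(A,h)\bigl[\delta(A,\nu)+\delta(A^{-1},\nu^{-1})\bigr]f_{\sigma s_k}(\lambda).
\]

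The off-diagonal bracket vanishes by the elementary antisymmetry $\vt(1/x)=-\vt(x)$ (immediate from the product formula defining $\vt$ in Section~\ref{sec:theta}): this antisymmetry yields $\delta(A^{-1},\nu^{-1})=-\delta(A,\nu)$, whence the coefficient of $f_{\sigma s_k}(\lambda)$ is zero.

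The main obstacle is the diagonal identity
\[
\delta(A,\nu)\delta(A,\nu^{-1})+\delta(A,h)\delta(A^{-1},h)=\delta(h,\nu)\delta(h,\nu^{-1}),
\]
which must hold for all $A$ (so that the answer depends only on $\lambda$ and $h$, as claimed). Passing to additive variables $A=\eee^u$, $\nu=\eee^v$, $h=\eee^w$ and clearing the common denominator $\vt(A)^2\vt(\nu)^2\vt(h)^2$ using $\vt(1/x)=-\vt(x)$ repeatedly, this is equivalent to
\[
\theta(u+v)\theta(u-v)\theta(w)^2=\theta(u+w)\theta(u-w)\theta(v)^2+\theta(u)^2\theta(w+v)\theta(w-v),
\]
which is exactly Fay's trisecant identity \eqref{trisecant} specialized at $d=0$ with $a=u$, $b=w$, $c=v$. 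Since the resulting scalar $\delta(h,\nu)\delta(h,\nu^{-1})$ no longer depends on $\sigma$, this proves $C_k\circ C_k=\kappa_k(\lambda)\,\id$ with $\kappa_k(\lambda)=\delta(h,\nu_k(\lambda))\delta(h,1/\nu_k(\lambda))$. The technical heart is thus entirely encoded in the $d=0$ specialization of Fay's identity; all other inputs (antisymmetry of $\vt$, $s_k^2=\id$, $(L_k)_{\sigma s_k}=(L_k)_\sigma^{-1}$) are formal.
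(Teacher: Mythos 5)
Your proposal is correct and follows the same route as the paper's proof: expand $C_k\circ C_k$ on an arbitrary tuple, observe that the off-diagonal coefficient $\delta(A,h)\bigl[\delta(A,\nu)+\delta(A^{-1},\nu^{-1})\bigr]$ dies by the antisymmetry $\vt(1/x)=-\vt(x)$, and reduce the diagonal identity $\delta(A,\nu)\delta(A,\nu^{-1})+\delta(A,h)\delta(A^{-1},h)=\delta(h,\nu)\delta(h,\nu^{-1})$ to Fay's trisecant identity \eqref{trisecant} with one variable specialized to zero. The only difference is presentational: the paper simply declares ``it is enough to check the identity for $G=\GL_2$, $\sigma=\id$'' and does that one calculation, whereas you carry out the composition for general $G$ and arbitrary $\sigma$, making explicit the two facts ($s_k^\mu\nu_k=\nu_k^{-1}$ and $(L_k)_{\sigma s_k}=(L_k)_\sigma^{-1}$) that justify that reduction; this is slightly more self-contained but mathematically the same argument.
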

\begin{proof}
It is enough to check the identity for $G=\GL_2$, $\sigma=\id$ :
\begin{multline*}C_1\circ C_1(\{f_\bullet\})_{id}=\\
 \left(\delta\left(\frac{z_1}{z_2},h\right) \delta\left(\frac{z_2}{z_1},h\right)+\delta\left(\frac{z_2}{z_1},\frac{1}{\nu _1}\right) \delta\left(\frac{z_2}{z_1},\nu_1\right)\right)f_{id}+
\delta\left(\frac{z_2}{z_1},h\right) \left(\delta\left(\frac{z_1}{z_2},\frac{1}{\nu _1}\right)+\delta\left(\frac{z_2}{z_1},\nu_1\right)\right)f_{s_1 } \end{multline*}
Since the function $\theta$ is antisymmetric ($\vt(1/x)=-\vt(x)$ and hence $\delta(x,y)=-\delta(1/x,1/y)$) we have
$$C_1\circ C_1(\{f_\bullet\})_{id}=
 \left(\delta\left(\frac{z_1}{z_2},h\right) \delta\left(\frac{z_2}{z_1},h\right)+\delta\left(\frac{z_2}{z_1},\frac{1}{\nu _1}\right) \delta\left(\frac{z_2}{z_1},\nu_1\right)\right)f_{id}$$
$$= -\frac{\left(\vt
   (\nu _1)^2\,
   \vt \left(h\frac{
   z_1}{z_2}\right) \vt
   \left(h\frac{
   z_2}{z_1}\right)+\vt (h)^2\, \vt
   \left(\frac{z_2}{\nu _1
   z_1}\right) \vt
   \left(\frac{\nu _1
   z_2}{z_1}\right)\right)\vt'(1)^2}{\vt
   (h)^2\, \vt(\nu
   _1)^2 \,\vt
   \left(\frac{z_2}{z_1}\right)\!{}^2}f_{id}$$
Setting
$$a= h,\quad 
b= \frac{z_2}{z_1},\quad 
c= \nu_1,\quad 
d= 1$$
in the Fay's trisecant identity \eqref{trisecant} we obtain the claim.
\end{proof}
\medskip

\noindent{\it Proof of Formula \ref{BS-intro}} (from the Introduction) relating global nonrestricted classes, we conjugate the operation $C_k$ with the multiplication by the elliptic Euler class 
\begin{equation}\label{eq:ellipticEuler}\eell(TG/B)=\prod_{\alpha\,\in\,\text{positive roots}}\vt(\LL{\alpha})\end{equation}
(according to our convention $\LL{\alpha}=G\times_B\CC_{-\alpha}$). Since $$\vt(\LL{\alpha}|\sigma)/\vt(\LL{\alpha}|\sigma s_k)=-1$$ we obtain the minus sign in \eqref{BS-intro}.
\medskip

If $s_ks_\ell=s_\ell s_k$, then $C_k\circ C_\ell=C_\ell\circ  C_k$. 
Moreover for $\GL_n$
$$C_k\circ C_{k+1}\circ C_k=C_{k+1}\circ C_k\circ C_{k+1}.$$
Hence for $G=\GL_n$ the operators $C_k$ define a representation of the braid group.
For general $G$ the corresponding braid relation of the Coxeter group are satisfied. This is an immediate consequence of the fact, that the elliptic class does not depend on the resolution.

\begin{remark}\rm  
Note that the operations $C_\ell$ do not commute with
$\kappa_k(\lambda)$
but they satisfy
$\kappa_k(\lambda)\circ C_\ell=C_\ell\circ\kappa_k(s_\ell\lambda)$.
\end{remark}

This table summarizes the various forms of the Hecke algebras whose operators produce more and more general characteristic classes of Schubert varieties.

\begin{center}{\def\arraystretch{2}
\begin{tabular}{ |c|c|c| } 
 \hline
invariant&operation&square\\ 
\hline  
$[-]$ & $\tfrac 1{c_1(L_k)}(\id+ s_k^\gamma)$ & $D_k^2=0$ \\ 
 $c^{sm}$ & $\tfrac 1{c_1(L_k)} \id+ \tfrac {1+c_1(L_k)}{c_1(L_k)} s_k^\gamma$ & $A_k^2=\id$ \\ 
 $mC_y$ & $\frac{(1+y)L_k^{-1}}{1-L_k^{-1}} \id
+ \frac{1+y\,L_k^{-1}}{1-L_k^{-1}} s^\gamma_k$ & $B_k^2=-(y+1)B_k-y\,\id$ \\ 
& & or $(B_k+y)(B_k+1)=0$ \\
$E(-,\lambda)$ & $\; \; \delta(L_k,h^{\langle\lambda,\alpha_k^\vme\rangle})
\, s^\mu_k +\delta(L_k,h)s^\gamma_ks^\mu_k\; \; 
$ & $C_k^2=\kappa_k(\lambda)$. \\
  \hline
\end{tabular}}
\end{center}
\bigskip

\subsection{Modifying the degeneration of $E(-,\lambda)$ and $C_k$} 
The characteristic classes, $[-]$, $c^{sm}$, $mC_y$, $E(-,\lambda)$ are of increasing generality: an earlier in the list can be obtained from a latter in the list by formal manipulations. 
However, the limiting procedure of getting $mC_y$ from $E(-,\lambda)$ itself is not obvious. We describe this procedure below. As a result we obtain a family of $mC_y$-like classes, with only one of them the $mC_y$-class, as well as a family of  corresponding Hecke-type algebras.

The theta function has the limit property
$$\lim_{q\to 0}\vt(x) = x^{1/2}-x^{-1/2}.$$
It follows that 
$$\lim_{q\to 0}\delta(x,h)=\frac{1-1/(x h)}{(1-1/x)(1-1/h)}\,.$$
The motivic Chern class is the limit of the elliptic class when $q=\eee^{2\pi i \tau}\to0$.
The limit of the elliptic class of a pair is not we would expect: the limit of the boundary factor is equal to
$$\lim_{q\to 0}\delta(x,\nu_k(\lambda))=\frac{1-L_k^{-1}/\nu_k(\lambda)}{(1-L_k^{-1})(1-1/\nu_k(\lambda))}$$
where 
$\nu_k(\lambda)=h^{\langle\lambda,\alpha_k^\vme\rangle}$.
The limit Hecke algebra differs  from the Hecke algebra computing $mC_y$'s of Schubert cells. The limit classes depend on the parameter $\lambda\in\t^*$.
Calculation shows that in the limit when $q\to 0$ we obtain the operators
$$C^{q\to 0}_k=\left(\frac{1-L_k^{-1}/\nu_k(\lambda)}{(1-L_k^{-1})(1-1/\nu_k(\lambda))}\, \id+\frac{1 -L_k^{-1}/h}{(1-L_k^{-1})(1-1/h)}s_k^\gamma\right)\circ s^\mu_k$$
satisfying
$$C^{q\to 0}_k\circ C^{q\to 0}_k=\frac{1 -\nu_k(\lambda)/h}{(1-\nu_k(\lambda))(1-1/h)}\cdot \frac{1 -1/(\nu_k(\lambda)h)}{(1-1/\nu_k(\lambda))(1-1/h)} id.$$
\bigskip

Another method of passing to the limit, as in \cite{BoLi2}, is
when we first rescale $\lambda$ by $\log(q)/\log(h)$ (thus where in the formulas we had $h^{\langle \lambda,\alpha^\vme_k\rangle}$ we have $q^{\langle \lambda,\alpha^\vme_k\rangle}$ instead)
and then pass to the limit. 
For  $0 < Re(z) < 1$ , and $m \in \ZZ$ we have
$$\lim_{q\to 0}\frac{\vt(a q^{ m+z} )}{\vt(b q^{ m+z} )}
= (a/b)^{-m-1/2}\,.$$
Passing to the limit we obtain different factors:
$$
\lim_{q\to 0}\delta (a, q^{\langle \lambda,\alpha_k^\vme\rangle})=\lim_{q\to 0}\frac{\vt'(1)\vt(a q^{\langle \lambda,\alpha_k^\vme\rangle})}{\vt(a)\vt(q^{\langle \lambda,\alpha_k^\vme\rangle})}=\frac{a^{-b_k(\lambda)-1/2}}{a^{1/2}-a^{-1/2}}=\frac{a^{-b_k(\lambda)-1}}{1-a^{-1}}\,, 
$$
where $b_k(\lambda)$ is the integral part of $Re \langle \lambda,\alpha_k^\vme\rangle$, provided that $ \langle \lambda,\alpha_k^\vme\rangle\not\in \Z$.
The limit operation now takes form
$$\widetilde C^{q\to 0}_k=\left( \frac{L_k^{-b_k(\lambda)-1}}{1-L_k^{-1}} \, \id+\frac{1 -L_k^{-1}/h}{(1-L_k^{-1})(1-1/h)}s^\gamma_k\right)\circ s^\mu_k\,.$$
Setting $y=-h^{-1}$ we obtain a form resembling the operation $B_k$:
$$\frac{1}{1+y}\left( \frac{(1+y)L_k^{-b_k(\lambda)-1}}{1-L_k^{-1}} \, \id+\frac{1 +y\,L_k^{-1}}{(1-L_k^{-1})}s_k^\gamma\right)\circ s^\mu_k\,.$$
For weights $\lambda$ belonging to the dominant Weyl chamber, which are sufficiently close to 0 we obtain the operation $B_k$. But note that here still the limit operation is composed with the action of $s_k$ on $\lambda\in \t^*$. In general we obtain a version of ``motivic stringy invariant'' mentioned in \cite[\S11.2]{SchYo}.

\begin{remark}\rm
The operators $\widetilde C^{q\to 0}_k$ map the so-called {\em trigonometric weight functions} of \cite[Section 3.2]{RTV} into each other. These functions also depend on an extra {\em slope} or {\em alcove} parameter, where a region in a subset of $\t^*$ where the functions $b_k$ are constant.  
The resulting multiplier for $\widetilde C^{q \to 0}_k\circ \widetilde C^{q\to 0}_k$ equals
$$\lim_{q\to 0} \delta(h,q^{\langle\lambda,\alpha^\vme_k\rangle})\delta(h,q^{-\langle\lambda,\alpha^\vme_k\rangle})=
 \frac{h^{-b_k(\lambda)-1}}{(1-h^{-1})}\cdot \frac{h^{-b_k(-\lambda)-1}}{(1-h^{-1})}=
-\frac{h^{-1}}{(1-h^{-1})^2}=
-\frac{ y}{(1+y)^2}\,$$
(since $b_k(\lambda)+b_k(-\lambda)=-1$), which, remarkably, does not depend of $\lambda$. 
\end{remark}

\section{Weight functions
}\label{sec:EllWeight}

In this section we focus on type A Schubert calculus, and give a formula for the elliptic class of a Schubert variety in terms of natural generators in the K theory of $\F(n)=G/B$. This formula will coincide with the {\em weight function} defined in \cite{RTV} (based on earlier weight function definitions of Tarasov-Varchenko \cite{TV}, Felder-Rim\'anyi-Varchenko \cite{FRV18}, see also \cite{konno}). Weight functions play an important role in representation theory, quantum groups, KZ differential equations, and recently (in some situations) they were identified with stable envelopes in Okounkov's theory.

\smallskip

For a non-negative integer $n$ let $\F(n)$ be the full flag variety parametrizing chains of subspaces  $0=V_0\subset V_1\subset \ldots \subset V_n=\C^n$ with $\dim V_k=k$. We will consider the natural action of $\T=(\C^*)^n$ on $\F(n)$. The $\T$-equivariant tautological rank $k$ bundle (i.e. the one whose fiber is $V_k$) will be denoted by $\TTT^{(k)}$, and let $\GGG_k$ be the line bundle $\TTT^{(k)}/\TTT^{(k-1)}$. 

Let $\gamma_k$ be the class of $\GGG_k$ in $K_{\T}(\F(n))$ and let $t^{(k)}_1,\ldots,t^{(k)}_k$ be the \gr roots of $\TTT^{(k)}$ (i.e. $[\TTT^{(k)}]=t^{(k)}_1+\ldots+t^{(k)}_k$) for $k=1,\ldots,n$.
Let us rename $t^{(n)}_j=z_j$. 

It is well known that the $\T$-equivariant K ring of $\F(n)$ can be presented as
\begin{align}
\label{KT1}
K_{\T}(\F(n))&=
\Z[(t^{(k)}_a)^{\pm 1},z_j^{\pm 1}]_{k=1,\ldots,n-1,\ a=1,\ldots,k,\ j=1,\ldots,n}^{{S_1}\times \ldots \times S_{n-1}}/(\text{relations})\\
\label{KT2}
&= 
\Z[(\gamma_{k})^{\pm 1},z_j^{\pm1}]_{k=1,\ldots,n,\ j=1,\ldots,n}/(\text{relations}),
\end{align}
with certain relations.
The first presentation is a result of presenting the flag variety as a quotient of the quiver variety $$\F(n)=V/\!/G\,,\qquad V=\prod_{k=1}^{n-1}\Hom(\C^k,\C^{k+1})\,,\qquad G=\prod_{k=1}^{n-1}\GL_k,$$ see \cite[\S6]{FRW2}. Then
$$K_{G\times \T}(V)\;\twoheadrightarrow\; K_{G\times \T}(U)\simeq K_\T(\F(n))\,,$$
where $U$ is the open subset in $V$ consisting of the family of monomorphisms.
The variables $t^{(k)}_a$ are just the characters of the factor $\GL_k$.
The second presentation comes from a geometric picture as well:
$\F(n)=\GL_n/B_n$ is homotopy equivalent to $\GL_n/\T$ and
$$K_{\T\times \T}(\Hom(\C^n,\C^n))\;\twoheadrightarrow\; K_{\T\times \T}(\GL_n)\simeq K_\T(\GL_n/\T).$$
The variables $\gamma_k$ appearing in the presentation \eqref{KT2} are the characters of the second copy of $\T$ acting from the right on $\Hom(\C^n,\C^n)$.

Explicit generators of the ideal of relations could be named in both lines \eqref{KT1}, \eqref{KT2}, but it is more useful to understand the description of the ideals via ``equivariant localization'' (a.k.a. ``GKM description'', or ``moment map description''), as follows.

The $\T$ fixed points $x_\sigma$ of $\F(n)$ are parameterized by permutations $\sigma\in S_n$. 
The restriction map from $K_{\T}(\F(n))$ to $K_{\T}(x_\sigma)=\Z[z_j^{\pm 1}]_{j=1,\ldots,n}$ is given by the substitutions 
\begin{equation}\label{substitute}
t^{(k)}_a\mapsto z_{\sigma(a)}, \qquad\qquad\qquad 
\gamma_{k}\mapsto z_{\sigma(k)}.
\end{equation}
Symmetric functions in $t^{(k)}_a, z_j$, and functions in $\gamma_{k},z_j$ belong to the respective ideals of relations if and only if their substitutions \eqref{substitute} vanish for all $\sigma\in S_n$ \cite[Appendix]{KR}, \cite[Ch. 5-6]{ChGi}.

\medskip

Our main objects of study, the classes $\Ek(X_\om)$ live in the completion of $K_{\T}(\F(n))$ adjoined with variables $h,\mu_k$, that is, in the ring
\begin{align}
\label{ring1}
&\Z[[(t^{(k)}_a)^{\pm 1},z_j^{\pm 1},h,\mu_j^{\pm 1}]]_{k=1,\ldots,n-1,\ a=1,\ldots,k,\ j=1,\ldots,n}^{{S_1}\times \ldots \times S_{n-1}}/(\text{relations})\\
\label{ring2}
= &\Z[[(\gamma_{k})^{\pm 1},z_k^{\pm1},h,\mu_k^{\pm 1}]]_{k=1,\ldots,n}/(\text{relations}),
\end{align}
where the same localization description holds for the two ideals of relations. 
Our goal in this section is to define representatives---that is, functions in $t^{(k)}_a,z_j,h,\mu_j$ and functions in $\gamma_{k}, z_j, h,\mu_j$---that represent the elliptic classes of $\Ek(X_\om)$ of Schubert varieties. This goal will be achieved in Theorem~\ref{thm:EsameW} below.

\subsection{Elliptic weight functions}
Now we recall some special functions called elliptic weight functions, from \cite{RTV}.
For $\om\in S_n$, $k=1,\ldots,n-1$, $a=1,\ldots,k$ define the integers 
\begin{itemize}
\item $\om^{(k)}_a$ by  $\{\om(1),\ldots,\om(k)\}=\{\om^{(k)}_1<\ldots<\om^{(k)}_k\}$,
\item $j_\om(k,a)$ by $\om^{(k)}_a=\om(j_\om(k,a))$,
\item 
\[
c_\om(k,a)=\begin{cases}\hfill 0&\text{if }\om(k+1)\geq \om^{(k)}_a\\
1 &\text{if }  \om(k+1)<  \om^{(k)}_a.\end{cases}
\]

\end{itemize}

\begin{definition} \cite{RTV}
For $\om\in S_n$ define the elliptic weight function by
\[
\ww_\om= \left(\tfrac{\vt(h)}{\vt'(1)}\right)^{\dim G}\frac{\Sym_{t^{(1)}}\ldots\Sym_{t^{(N-1)}}  U_\om}{\prod_{k=1}^{n-1} \prod_{i,j=1}^{k}  \vt(ht^{(k)}_i/t^{(k)}_j)},
\]
where 
\[
G=\prod_{k=1}^{n-1}\GL_k\,,\qquad \dim G=\tfrac{(n-1)n(2n-1)}6,
\]
\[
\Sym_{t^{(k)}} f(t^{(k)}_1,\ldots,t^{(k)}_{k})=\sum_{\sigma\in S_{k}} f(t^{(k)}_{\sigma(1)},\ldots,t^{(k)}_{\sigma(k)}),
\]
\[
 U_\om=\prod_{k=1}^{n-1} \prod_{a=1}^{k}\left( 
 \prod_{c=1}^{k+1} \psi_{\om,k,a,c}(t^{(k+1)}_c/t^{(k)}_a) \prod_{b=a+1}^{k} 
  \delta(t^{(k)}_b/t^{(k)}_a,h)
 \right),
\]
\[
\psi_{\om,k,a,c}(x)
=\vt(x)\cdot\begin{cases}
   \delta(x,h) & \text{if}\qquad \om^{(k+1)}_c<\om^{(k)}_a \\
   \delta\left(x,h^{1-c_\om(k,a)} \frac{\mu_{k+1}}{\mu_{j_\om(k,a)}}\right) & \text{if}\qquad \om^{(k+1)}_c=\om^{(k)}_a \\
   1 & \text{if}\qquad \om^{(k+1)}_c>\om^{(k)}_a.
\end{cases}
\]
\end{definition}

\noindent The usual names of the variables of the elliptic weight function are:  
\[
\begin{array}{lll}
t^{(k)}_a &  \text{ for } k=1,\ldots,n-1,\ a=1,\ldots,k& \text{the topological variables,} \\
z_a:=t^{(n)}_a                    &  \text{ for } a=1,\ldots,n & \text{the equivariant variables,}\\
h & & \text{the ``Planck variable'',}\\
\mu_k=h^{-\lambda_k} & \text{ for } k=1,\ldots, n & \text{the dynamical (or K\"ahler) variables}. 
\end{array}
\]
The function $\ww_\om$ is symmetric in the $t^{(k)}_*$ variables (for each $k=1,\ldots,n-1$ separately), but not symmetric in the equivariant variables. 

Consider the new variables $\gamma_1,\ldots,\gamma_n$, and define the modified weight function (of the variables $\gamma=(\gamma_1,\ldots,\gamma_n)$, $z=(z_1,\ldots,z_n)$, $\mu=(\mu_1,\ldots,\mu_n)$, and $h$)
\begin{equation}\label{modW}
\wwh_\om(\gamma,z,\mu,h)=\ww_\om
\left(t^{(k)}_a=\gamma_a \text{ for } k=1,\ldots,n-1; t^{(n)}_a=z_a\right),
\end{equation}
that is, we substitute $\gamma_a$ for $t^{(k)}_a$ for $k=1,\ldots,n-1$, and rename $t^{(n)}_a$ to $z_a$. This substitution corresponds to going from the presentation \eqref{KT1} to the presentation \eqref{KT2}.

\begin{example} \rm
We have 
\[\ww_{12}=\tfrac{1}{\vt'(1)}{\vt\left(\tfrac{t^{(2)}_1}{t^{(1)}_1}\right)\vt\left(\tfrac{t^{(2)}_2}{t^{(1)}_1}\right)\delta\left(\tfrac{t^{(2)}_1}{t^{(1)}_1},\tfrac{h\mu_2}{\mu_1}\right)}
=
\tfrac{\vt\left(\tfrac{t^{(2)}_2}{t^{(1)}_1}\right)\vt\left(\tfrac{h\mu_2t^{(2)}_1}{\mu_1t^{(1)}_1}\right)}{\vt\left(\tfrac{h\mu_2}{\mu_1}\right)},\]
\[
\ww_{21}=
\tfrac{1}{\vt'(1)}\vt\left(\tfrac{t^{(2)}_1}{t^{(1)}_1}\right)\vt\left(\tfrac{t^{(2)}_2}{t^{(1)}_1}\right)\delta\left(\tfrac{t^{(2)}_1}{t^{(1)}_1},h\right)\delta\left(\tfrac{t^{(2)}_2}{t^{(1)}_1},\tfrac{\mu_2}{\mu_1}\right)=
\frac{\vt'(1)\vt
   \left(\tfrac{h
   t^{(2)}_1}{t^{(1)}_1}\right)
   \vt\left(\frac{\mu_2
   t^{(2)}_2}{\mu _1
   t^{(1)}_1}\right)}{\vt (h)
   \vt\left(\frac{\mu
   _2}{\mu _1}\right)},
\]
and hence 
\begin{equation}\label{WWex}
\wwh_{12}=\tfrac{1}{\vt'(1)}
\vt\left(\tfrac{z_1}{\gamma_1}\right)\vt\left(\tfrac{z_2}{\gamma_1}\right)\delta\left(\tfrac{z_1}{\gamma_1},\tfrac{h\mu_2}{\mu_1}\right),
\ \ \ \
\wwh_{21}=
\tfrac{1}{\vt'(1)}
\vt\left(\tfrac{z_1}{\gamma_1}\right)
\vt\left(\tfrac{z_2}{\gamma_1}\right)
\delta\left(\tfrac{z_1}{\gamma_1},h\right) 
\delta\left(\tfrac{z_2}{\gamma_1},\tfrac{\mu_2}{\mu_1}\right)
.
\end{equation}
For $n=3$ for example we have 
\[
\wwh_{123}=
\frac{
\vt\left(\tfrac{z_2}{\gamma_1}\right)
\vt\left(\tfrac{z_3}{\gamma_1}\right)
\vt\left(\tfrac{z_3}{\gamma_2}\right)
\vt\left(\tfrac{z_1h}{\gamma_2}\right)
\vt\left(\tfrac{z_1 h\mu_3}{\gamma_1\mu_1}\right)   
\vt\left(\tfrac{z_2 h\mu_3}{\gamma_2\mu_2}\right)
}
{
\vt\left(\tfrac{h\mu_3}{\mu_1}\right)
\vt\left(\tfrac{h\mu_3}{\mu_2}\right)
\vt\left(\tfrac{\gamma_1h}{\gamma_2}\right)
}.
\]
\end{example}

The key properties of weight functions are the R-matrix recursion property, substitution properties, transformation properties, orthogonality relations, as well as their axiomatic characterizations, see details in \cite{RTV}. First we recall some obvious substitution properties, and the R-matrix recursion property. 

\subsection{Substitution properties}
Keeping in mind that fixed point restrictions in geometry are obtained by the substitutions~\eqref{substitute}, for permutations $\om,\sigma$ define    
\[
\ww_{\om,\sigma}=
{\wwh_{\om}}|_{\gamma_k=z_{\sigma(k)}}=
{\ww_{\om}}|_{t^{(k)}_i=z_{\sigma(i)}}.
\]
From the definition of weight functions (or by citing \cite[Lemmas 2.4, 2.5]{RTV}) it follows that 
$\ww_{\om,\sigma}=0$ unless $\sigma\leq \omega$ in the Bruhat order, and
\begin{equation*}
\ww_{\omega,\omega}={\prod_{i<j}\vt(z_{\om(j)}/z_{\om(i)})} \cdot \prod_{i<j,\om(i)>\om(j)} \delta(z_{\omega(j)}/z_{\omega(i)},h).
\end{equation*} 
In particular, we have
\begin{equation}\label{Rbasic}
{\ww_{\id,\sigma}}=\begin{cases}
{\prod_{i<j}\vt(z_{j}/z_{i})} & \text{if } \sigma=\id \\
0 & \text{if } \sigma\not=\id.
\end{cases}
\end{equation}

\subsection{R-matrix recursion} 
In \cite{RTV} (Theorem 2.2 and notation (2.8)) the following identity is proved for weight functions:
\begin{equation}\label{eq:Rmatrix}
s_k^z\ww_{s_k\om}  = 
\begin{cases}
\ww_{\om} \cdot \frac{\delta(\frac{\mu_{\om^{-1}(k)}}{\mu_{\om^{-1}(k+1)}},h) \delta(\frac{\mu_{\om^{-1}(k+1)}}{\mu_{\om^{-1}(k)}},h)}{\delta(\frac{z_k}{z_{k+1}},h)}   
- 
\ww_{s_k\om} \cdot \frac{\delta( \frac{z_{k+1}}{z_k}, \frac{\mu_{\om^{-1}(k)}}{\mu_{\om^{-1}(k+1)}})}{\delta(\frac{z_k}{z_{k+1}},h)}
& \text{if } \ell(s_k \om)>\ell(\om)
\\
\ww_{\om} \cdot \frac{1}{\delta(\frac{z_k}{z_{k+1}},h)}   
- 
\ww_{s_k\om} \cdot \frac{\delta( \frac{z_{k+1}}{z_k}, \frac{\mu_{\om^{-1}(k)}}{\mu_{\om^{-1}(k+1)}})}{\delta(\frac{z_k}{z_{k+1}},h)}
& \text{if } \ell(s_k \om)<\ell(\om),
\end{cases}
\end{equation}
where $s^z_k$ operates by replacing the $z_k$ and $z_{k+1}$ variables. Of course, the same formula holds for $\wwh$--functions (replace $\ww$ with $\wwh$ everywhere in \eqref{eq:Rmatrix}).

\begin{corollary} \label{cor:R-recur}
If $\ell(s_k\omega)=\ell(\omega)+1$ then
\begin{equation*}
\ww_{s_k\om}    
= 
\delta\left( \frac{z_{k+1}}{z_k}, \frac{\mu_{\om^{-1}(k+1)}}{\mu_{\om^{-1}(k)}}\right)
\cdot\ww_{\om}  + \delta\left(\frac{z_k}{z_{k+1}},h\right)\cdot s_k^z\ww_{\om} 
\end{equation*}
and the same holds if $\ww$ is replaced with $\wwh$.
\end{corollary}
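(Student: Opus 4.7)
The plan is to apply the identity \eqref{eq:Rmatrix} not to $\omega$ directly, but to the permutation $\omega' := s_k\omega$, and then solve the resulting linear relation for $\ww_{s_k\omega}$.

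The key observation is a length-comparison flip. By hypothesis $\ell(s_k\omega) = \ell(\omega) + 1$, so $\ell(\omega') = \ell(\omega)+1$ and $\ell(s_k\omega') = \ell(\omega) < \ell(\omega')$. Therefore, when \eqref{eq:Rmatrix} is applied to $\omega'$ in place of $\omega$, it is the \emph{second} (bottom) case that fires. Next I would track the dynamical parameter: since $(\omega')^{-1}(j) = \omega^{-1}(s_k(j))$, the ratio appearing in \eqref{eq:Rmatrix} for $\omega'$ is
\[
\frac{\mu_{(\omega')^{-1}(k)}}{\mu_{(\omega')^{-1}(k+1)}} \;=\; \frac{\mu_{\omega^{-1}(k+1)}}{\mu_{\omega^{-1}(k)}},
\]
which is precisely (the reciprocal of) the one in the statement. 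Finally $s_k\omega' = \omega$, so the R-matrix identity for $\omega'$ reads
\[
s_k^z \ww_\omega \;=\; \ww_{s_k\omega} \cdot \frac{1}{\delta\!\left(\tfrac{z_k}{z_{k+1}},h\right)} \;-\; \ww_\omega \cdot \frac{\delta\!\left(\tfrac{z_{k+1}}{z_k},\tfrac{\mu_{\omega^{-1}(k+1)}}{\mu_{\omega^{-1}(k)}}\right)}{\delta\!\left(\tfrac{z_k}{z_{k+1}},h\right)}.
\]

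The last step is routine: multiply through by $\delta(z_k/z_{k+1},h)$ and isolate $\ww_{s_k\omega}$ on the left, which produces exactly the claimed formula. Since \eqref{eq:Rmatrix} is stated for both $\ww$ and $\wwh$, the same derivation yields the $\wwh$ version.

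The only non-automatic step is confirming the identification of the dynamical ratio and matching the signs/orientations of the arguments of $\delta$ so that the RHS really comes out with $\delta(z_{k+1}/z_k, \mu_{\omega^{-1}(k+1)}/\mu_{\omega^{-1}(k)})$ and $\delta(z_k/z_{k+1}, h)$ rather than their inverses; this is a minor bookkeeping check (using, if needed, $\vt(1/x)=-\vt(x)$, i.e.\ $\delta(1/a,1/b) = -\delta(a,b)$). There is no genuine obstacle—the corollary is a one-line algebraic rearrangement of the second case of \eqref{eq:Rmatrix} read for $s_k\omega$ instead of $\omega$.
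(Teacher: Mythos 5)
Your proof is correct and is exactly the argument the paper gives: apply the second case of \eqref{eq:Rmatrix} with $\omega$ replaced by $s_k\omega$, note that $(s_k\omega)^{-1}=\omega^{-1}s_k$ flips the dynamical indices, and solve for $\ww_{s_k\omega}$. The only blemish is the parenthetical remark that the computed ratio is ``the reciprocal of'' the one in the statement---it is in fact literally the same ratio---but this does not affect the validity of the derivation.
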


\begin{proof} The statement follows from the second line of \eqref{eq:Rmatrix}, after we rename $\omega$ to $s_k \omega$. 
\end{proof}

A key observation is that the recursion in Corollary \ref{cor:R-recur}, together with the initial condition \eqref{Rbasic} completely determine the classes $\ww_{\om,\sigma}$.

\section{Weight functions are representatives of elliptic classes}
For a rank $n$ bundle $\TTT$ with Grothendieck roots $t_k$ we defined its K theoretic Euler class in~\eqref{eq:eK}. 
We recall that the elliptic cohomology is understood as Borel equivariant cohomology $\hat H^*_\T(-;\CC)[[q]]$ with the complex orientation given by the theta function.
The elliptic Euler class is defined by $\eee^{\rm ell}(\TTT)=\prod_{k=1}^n  \theta(\xi_k)$, where $\xi_k$ are the Chern roots, $ch(t_k)=\eee^{\xi_k}$. Below we identify equivariant K theory or equivariant elliptic cohomology classes of $G/B$ with the tuple of their restrictions to the fixed points.  Formally we should apply the Chern character to compare formulas in $$K_\T(pt)[[q]]\simeq \ZZ[z_1^{\pm 1},z_2^{\pm 1},\dots,z_n^{\pm 1}][[q]]$$ with 
$$\hat H^*_\T(pt;\CC)[[q]]\simeq \CC[[x_1,x_2,\dots,x_n,q]]\,.$$ 
Here the variables $z_k$ form the basis of characters of $\T$, while the variables $x_k$ are the weights, they form an integral basis of $\t^*$.
The Chern character is given by the substitution $$z_k\mapsto \eee^{x_k}$$
and it is clearly injective. Therefore from now on we will omit the Chern character in the notation and for example we will write $\vt(z_i/z_j)$ instead of $\theta(x_i-x_j)$.
\medskip

We will be concerned with $\eell(T\!\Fl(n))_{|\sigma}=\eell(T_\sigma\!\Fl(n))=\prod_{i<j} \vt(z_{\sigma(j)}/z_{\sigma(i)})$ for a permutation $\sigma$. Using this Euler class, the recursion we obtained in the last section, \eqref{Rbasic}, reads:
\begin{equation}\label{locini}
\frac{\ww_{\id,\sigma}}{\eell(T_{\sigma}\Fl(n))}=\begin{cases} 1 & \text{if } \sigma=\id \\ 0 & \text{if } \sigma\not=\id, \end{cases} 
\end{equation}
and for $\ell(s_k \om)=\ell(\om)+1$
\begin{equation}\label{eq:Rw-recurence}
\frac{\ww_{s_k\om,\sigma}}{\eell(T_\sigma\!\F(n))}
= 
\delta\left( \tfrac{z_{k+1}}{z_k}, \tfrac{\mu_{\om^{-1}(k+1)}}{\mu_{\om^{-1}(k)}}\right)
\cdot\frac{\ww_{\om,\sigma}}{\eell(T_\sigma\!\F(n))}  + \delta\left(\tfrac{z_k}{z_{k+1}},h\right)\cdot s_k^z\left(\frac{\ww_{\om,s_k\sigma}}{\eell(T_{s_k\sigma}\!\F(n))}\right).
\end{equation}

Now we are ready to state the theorem that weight functions represent the elliptic classes of Schubert varieties.

\begin{theorem}\label{thm:EsameW}
Set $\mu_i=h^{-\lambda_i}$. With this identification
in presentation \eqref{ring1} we have 
\[
\Ek(X_\om,\lambda)=\frac{\eekk(\TFn)}{\eell(\TFn)}\cdot [\ww_\om] \,,
\]
and in presentation \eqref{ring2} we have
\[
\Ek(X_\om,\lambda)= \frac{\eekk(\TFn)}{\eell(\TFn)}\cdot [\wwh_\om]\,.
\]
\end{theorem}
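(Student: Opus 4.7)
The plan is to prove the equality at each torus fixed point $\sigma \in S_n$ by induction on $\ell(\omega)$. At a fixed point, the theorem reduces to the claim
\[
E_\sigma(X_\omega) = \frac{\wwh_{\omega,\sigma}}{\eell(T_\sigma \F(n))},
\]
where $\wwh_{\omega,\sigma}= \wwh_\omega|_{\gamma_a = z_{\sigma(a)}}$ and we substitute $\mu_i = h^{-\lambda_i}$. Once this is shown, multiplying by the normalization factor $\eekk(\TFn)/\eell(\TFn)$ yields the formula in presentation \eqref{ring2}, and the presentation \eqref{ring1} follows from the defining substitution \eqref{modW}.

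For the base case $\omega=\id$, the recursion \eqref{Eini} gives $E_\sigma(X_\id) = \delta_{\sigma,\id}$. On the weight function side, \eqref{Rbasic} gives $\wwh_{\id,\sigma} = \prod_{i<j}\vt(z_j/z_i)\cdot \delta_{\sigma,\id}$, and the product $\prod_{i<j}\vt(z_j/z_i)$ equals $\eell(T_\id \F(n))$ since the tangent characters at $x_\id$ are the negative roots; dividing by $\eell(T_\sigma\F(n))$ then also gives $\delta_{\sigma,\id}$, so the two sides agree.

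For the inductive step, I would compare the R-matrix recursion of Corollary \ref{cor:R-recur} (which, after restriction and division by $\eell(T_\sigma\F(n))$, gives the pointwise recursion \eqref{eq:Rw-recurence} for the normalized weight functions) with the R-matrix recursion for elliptic classes in Formula \ref{R-intro}. Under the $\GL_n$ dictionary $e^{\alpha_k}=z_k/z_{k+1}$ and $h^{\omega^{-1}\alpha_k^\vee}=\mu_{\omega^{-1}(k+1)}/\mu_{\omega^{-1}(k)}$, the coefficient $\delta(e^{-\alpha},h^{\omega^{-1}\alpha^\vee})$ in \ref{R-intro} is precisely $\delta(z_{k+1}/z_k,\mu_{\omega^{-1}(k+1)}/\mu_{\omega^{-1}(k)})$, matching the first term of \eqref{eq:Rw-recurence}. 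The paper already asserts (the remark just after Formula \ref{R-intro}) that for $G=\GL_n$ the recursion \ref{R-intro} is equivalent to the three-term R-matrix identity of \cite{RTV} that governs the weight functions, i.e.~to Corollary \ref{cor:R-recur}; the matching of the second term uses the antisymmetry $\vt(1/x)=-\vt(x)$ and the transformation property of $\eell(\TFn)$ between the fixed points $\sigma$ and $s_k\sigma$ (the same Fay-type algebra that appeared in the proof of Theorem \ref{th:kappa}). Hence the local elliptic classes $E_\sigma(X_\omega)$ and the normalized weight functions $\wwh_{\omega,\sigma}/\eell(T_\sigma\F(n))$ satisfy precisely the same recursion.

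Since every $\omega\in S_n$ can be reached from $\id$ by successively multiplying on the left by simple reflections that increase length, the recursion together with the initial condition at $\omega=\id$ determines the whole family uniquely, and the induction yields $E_\sigma(X_\omega)=\wwh_{\omega,\sigma}/\eell(T_\sigma\F(n))$ for all $\omega\in S_n$ and all $\sigma$. The main obstacle in executing the plan is the careful bookkeeping needed to verify that Formula \ref{R-intro} and \eqref{eq:Rw-recurence} coincide at the local level: this amounts to tracking the interaction of the $s^z$-action with the Euler class $\eell(\TFn)$ at the two fixed points $\sigma$ and $s_k\sigma$, and using the antisymmetry of $\vt$ to reconcile the sign convention between the two forms of the recursion.
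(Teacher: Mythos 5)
Your overall strategy — reduce to fixed-point restrictions, match the base case, and show both $E_\sigma(X_\omega)$ and $\wwh_{\omega,\sigma}/\eell(T_\sigma\Fl(n))$ satisfy the same recursion — is the paper's strategy, and your base case argument is correct. But there is a genuine gap at the heart of the inductive step: you invoke Formula \ref{R-intro} (the R-matrix recursion for elliptic classes) as if it were already available, and the paper does \emph{not} have this prior to Theorem~\ref{thm:EsameW}. Formula~\ref{R-intro} is only stated in the Introduction as an advertised result; the geometry of the Bott--Samelson resolution establishes only the Bott--Samelson recursion (Theorem~\ref{th:mainiduction}), which is a recursion in the \emph{right} factor of $\omega$ acting on the $\mu$/$\gamma$-variables, not the R-matrix recursion in the \emph{left} factor acting on the $z$-variables. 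These two recursions are genuinely different: one involves $s^\gamma$, $s^\mu$, the other $s^z$.

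What has to be supplied — and what constitutes the real content of the paper's proof — is Proposition~\ref{pro:ERrecursion}: a derivation, by induction on $\ell(\omega)$, that the Bott--Samelson recursion forces the local elliptic classes to also satisfy the R-matrix recursion. The paper's argument writes $\omega=\omega_1 s_\ell$ reduced, expands $E_\sigma(X_{s_k\omega})$ via the Bott--Samelson recursion, applies the inductive hypothesis (the R-matrix recursion for $\omega_1$) to each of the resulting terms, and then uses the commutation identities among $s_k^z$, $s_\ell^\mu$, and multiplication by $\delta(L_\bullet,\cdot)$ to reassemble the expression as the R-matrix recursion for $\omega$. This is precisely the step your proposal omits; without it, citing "the paper already asserts" Formula~\ref{R-intro} is circular, since the paper establishes that formula only after, and as a consequence of, the argument you are trying to construct.
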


\begin{remark} \rm \label{rm:ell2K}
Continuing Remark \ref{rem:EllH} let us note that if we had set up the elliptic class of varieties not in equivariant K theory but in equivariant elliptic cohomology, then the class would be multiplied by ${\eell(TM)}/{\eekk(TM)}$ (where $M$ is the ambient space). That is, Theorem \ref{thm:EsameW} claims that the functions $\ww_\om$, $\wwh_\om$ represent the elliptic class of Schubert varieties in equivariant elliptic cohomology. 
\end{remark}

\subsection{Proof of Theorem \ref{thm:EsameW}}
Let us fix a notation:

\medskip
\noindent{\bf Convention.} We will skip $\lambda$ in the notation of $E_\sigma(X_\om,\lambda)$ and we treat $E_\sigma(X_\om)$ as a function on $\lambda\in\t^*\simeq \C^n$ expressed by  
the basic functions $\mu_k=\mu_k(\lambda)=h^{-\lambda_k}$. The action of $\om\in W$ on the space of functions generated by $\mu_\bullet$ will be denoted by $s_k^\mu$. We will write $L_{k,\sigma}$ to denote the character of the line bundle $L_k$ at the point $x_\sigma$.
\bigskip

\noindent We need to prove that for all $\om, \sigma$ 
\[
E_\sigma(X_{\om})=\frac{\ww_{\om,\sigma}}{\eell(T_{\sigma}\!\F(n))}.
\]
This will be achieved by showing that the recursive characterization \eqref{locini}, \eqref{eq:Rw-recurence} of the right hand side holds for the left hand side too.
The basic step \eqref{locini} holds for $E_\sigma(X_{\id})$ because of  \eqref{Eini}.

\begin{proposition}\label{pro:ERrecursion}Suppose $G=\GL_n$. If $\ell(s_k\om)=\ell(\om)+1$ then the functions $E_\sigma(X_\om)$ satisfy the recursion
$$E_\sigma(X_{s_k\om})=
\delta\left(\frac{z_{k + 1}}{z_k},\frac{ \mu_{\om^{-1}(k + 1)}}{\mu_{\om^{-1}(k)}}\right) 
\cdot E_\sigma(X_\om) + 
\delta\left(\frac{z_k }{z_{k+1}},h\right)\cdot
s_k^z E_{s_k\sigma}(X_\omega).$$
More generally for an arbitrary reductive group 
$$E_\sigma(X_{s_k\om})=
\delta\left(L_{k,\id},(\om^{-1})^\mu(\nu_k)\right) 
\cdot E_\sigma(X_\om) + 
\delta\left(L^{-1}_{k,\id},h\right)\cdot
s_k^zE_{s_k\sigma}(X_\omega)\,.$$
Here 
$s_k^z$ 
means the action of $s_k$ on $z$-variables and $\omega^\mu$ acts on the $\mu$-variables, $\nu_k=h^{-\alpha_k^\vme}$.\end{proposition}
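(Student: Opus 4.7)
The plan is to resolve $X_{s_k\om}$ using the Bott--Samelson variety $Z_{s_k\omm}$, where $\omm$ is any reduced word for $\om$ (so that $s_k\omm$ is reduced for $s_k\om$), and to exploit the $\PP^1$-bundle structure $p\colon Z_{s_k\omm}=P_k\times_B Z_\omm\to P_k/B$ coming from the \emph{first} coordinate, rather than the last-coordinate projection used to prove the Bott--Samelson recursion in Section~\ref{sec:BSres}. The $\T$-fixed points of $Z_{s_k\omm}$ split into two families indexed by $\epsilon_0\in\{0,1\}$, lying in the fibers over $eB$ and $\tilde{s}_kB$ respectively. Since $f_{s_k\omm}(0,x')=f_\omm(x')$ while $f_{s_k\omm}(1,x')=\tilde{s}_k\cdot f_\omm(x')$, Proposition~\ref{prop:localization_general} rewrites
\[
E_\sigma(X_{s_k\om})=\sum_{x'\in f_\omm^{-1}(x_\sigma)} E_{(0,x')}(Z_{s_k\omm})+\sum_{x'\in f_\omm^{-1}(x_{s_k\sigma})} E_{(1,x')}(Z_{s_k\omm}),
\]
and the goal is to identify the two sums with the two terms of the R-matrix recursion.

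For the $\epsilon_0=0$ piece, the component $\partial_1 Z_{s_k\omm}$ passes through $(0,x')$ transverse to the $\PP^1$-tangent direction, whose character is $L_{k,\id}=\eee^{-\alpha_k}$. Corollary~\ref{lastword} applied to the first letter of $s_k\omm$ gives that the multiplicity of $f_{s_k\omm}^*\xi_\lambda^{s_k\om}$ along $\partial_1 Z_{s_k\omm}$ equals $\langle\lambda,\om^{-1}\alpha_k^\vme\rangle$, and the same corollary applied to later letters shows that the multiplicities along $\partial_j Z_{s_k\omm}$ for $j\geq 2$ coincide with those of $f_\omm^*\xi_\lambda^\om$ along $\partial_{j-1}Z_\omm$ (the associated roots from the decomposition are literally the same). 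Combined with the tangent-character splitting $T_{(0,x')}Z_{s_k\omm}=T_{eB}(P_k/B)\oplus T_{x'}Z_\omm$, this yields
\[
E_{(0,x')}(Z_{s_k\omm})=\delta\!\left(L_{k,\id},\,h^{\langle\lambda,\om^{-1}\alpha_k^\vme\rangle}\right)\cdot E_{x'}(Z_\omm,\lambda),
\]
and summing over $x'\in f_\omm^{-1}(x_\sigma)$ produces the first term $\delta(L_{k,\id},(\om^{-1})^\mu\nu_k)\cdot E_\sigma(X_\om)$.

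For the $\epsilon_0=1$ piece, $\partial_1 Z_{s_k\omm}$ does \emph{not} pass through $(1,x')$, so the $\PP^1$-tangent direction (of character $L_{k,\id}^{-1}=\eee^{\alpha_k}$) carries divisor multiplicity zero and contributes the factor $\delta(L_{k,\id}^{-1},h)$. The essential geometric input, and what I expect to be the main obstacle, is the $s_k$-twisting of the $\T$-action on the fiber over $\tilde{s}_kB$: a direct computation in $P_k\times_B Z_\omm$ gives $t\cdot[\tilde{s}_k,z]=[t\tilde{s}_k,z]=[\tilde{s}_k\cdot s_k(t),z]=[\tilde{s}_k,s_k(t)\cdot z]$, so every tangent character of $Z_\omm$ at $x'$ is replaced at $(1,x')$ by its $s_k$-image, while the integer multiplicities $\langle\lambda,\alpha^\vme\rangle$ are unchanged because Corollary~\ref{lastword} yields identical roots for $s_k\om$ and $\om$ in positions $j\geq 2$. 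Consequently
\[
E_{(1,x')}(Z_{s_k\omm})=\delta(L_{k,\id}^{-1},h)\cdot s_k^z\!\left(E_{x'}(Z_\omm,\lambda)\right),
\]
and summing over $x'\in f_\omm^{-1}(x_{s_k\sigma})$ produces the second term $\delta(L_{k,\id}^{-1},h)\cdot s_k^z E_{s_k\sigma}(X_\om)$. Adding the two contributions proves the recursion for arbitrary reductive $G$, and the $\GL_n$ formulation stated in the proposition follows at once from the substitutions $L_{k,\id}=z_{k+1}/z_k$ and $(\om^{-1})^\mu\nu_k=\mu_{\om^{-1}(k+1)}/\mu_{\om^{-1}(k)}$.
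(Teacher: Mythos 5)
Your proof is correct, but it takes a genuinely different route from the paper's. The paper proves Proposition~\ref{pro:ERrecursion} by induction on $\ell(\om)$: writing $\om = \om_1 s_\ell$ with $\ell(\om_1)<\ell(\om)$, it applies the Bott--Samelson recursion of Theorem~\ref{th:mainiduction} to both $\om$ and $s_k\om$, invokes the inductive hypothesis for $\om_1$, and then rearranges the resulting four-term expression using the commutation identities between the $\delta$-factors and the $s^z_k$, $s^\mu_\ell$, $s^\gamma_\ell$ operators. You instead give a direct localization argument on $Z_{s_k\omm}$, exploiting the \emph{other} fibration structure $Z_{s_k\omm}=P_k\times_B Z_\omm\to P_k/B$ (adding the letter on the left), as opposed to the projection to $Z_{(s_k\omm)'}$ used throughout Section~\ref{sec:BSres}. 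This nicely exposes the geometric parallel between the two recursions: the Bott--Samelson recursion comes from the rightmost $\PP^1$-fibration of the Bott--Samelson variety, while the R-matrix recursion comes from the leftmost one. Your key steps check out: the fiber over $\tilde{s}_kB$ carries the $s_k$-twisted $\T$-action (so tangent characters get $s_k^z$-rotated), the zero-section multiplicities along $\partial_j Z_{s_k\omm}$ for $j\geq 2$ coincide with those along $\partial_{j-1}Z_\omm$ by Corollary~\ref{lastword} (the relevant root $\alpha$ depends only on the suffix of the word), the new first boundary component contributes $\delta(\eee^{-\alpha_k},h^{\langle\lambda,\om^{-1}\alpha_k^\vme\rangle})$ over the fiber at $eB$ and nothing over $\tilde{s}_kB$, and $\delta(L_{k,\id}^{-1},h)$ is a constant that factors out of the sum so it can be moved past $s_k^z$. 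The paper's algebraic proof is shorter because it recycles Theorem~\ref{th:mainiduction}; yours re-does some of that multiplicity bookkeeping but yields a more geometric explanation of where the R-matrix recursion comes from. One small quibble on wording: the map $Z_{s_k\omm}\to P_k/B$ is a $Z_\omm$-bundle over $\PP^1$, not a $\PP^1$-bundle.
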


The reader may find it useful to verify the statement for $n=2$ using the local classes below.   
\begin{center}{\def\arraystretch{2}
\begin{tabular}{|c|l|l|}
\hline
&$\om=\id$&$\om=s_1$\\
\hline
$E_\sigma(X_\om)$ &$E_{12}(X_{12})=1 $  &  $E_{21}(X_{12})=0$ \\
& $E_{12}(X_{21})=\delta(\frac{z_2}{z_1},\frac{\mu_2}{\mu_1})$    &  $E_{21}(X_{12})=\delta(\frac{z_1}{z_2},h)$ \\
\hline
$\ww_{\om,\sigma}$ & $\ww_{12,12} =\vt(\frac{z_2}{z_1})  $                                       &  $\ww_{12,21}=0$ \\
& $\ww_{21,12}=\vt(\frac{z_2}{z_1}) \delta(\frac{z_2}{z_1},\frac{\mu_2}{\mu_1})$ &  $\ww_{21,21}=\vt(\frac{z_1}{z_2}) \delta(\frac{z_1}{z_2},h)$ \\
\hline
\end{tabular}}
\end{center}

\begin{proof}
We prove the proposition by induction with respect to the length of $\om$. We assume that the formula of Proposition \ref{pro:ERrecursion} holds for $\om_1$ with $\ell(\om_1)<\ell(\om)$. 
We introduce the notation for a group element  inverse:
$\bb\om=\om^{-1}$. Note that $\bb{s_k}=s_k$.
Let's assume that  $\omm s_\ell$ is a reduced expression of $\om$ and $\ell(s_k\om)=\ell(\om)+1$. Then  
$\ell(s_k\om_1)=\ell(\om_1)+1$.
By Theorem \ref{th:mainiduction}
$$E_\sigma(X_{s_k\om})=\delta(L_{\ell,\sigma},\nu_\ell)s_\ell^\mu E_\sigma(X_{s_k\om_1 })+\delta(L_{\ell,\sigma},h)s_\ell^\mu E_{\sigma s_\ell}(X_{s_k\om_1 })$$
By inductive assumption this expression is equal to
\begin{multline*}\delta(L_{\ell,\sigma},\nu_\ell)s_\ell^\mu\big(\delta(L_{k,\id},\bb\om_1^\mu(\nu_k))E_\sigma(X_{\om_1})+\delta(L_{k,\id}^{-1},h)s_k^z E_{s_k\sigma}(X_{\om_1})\big)\\
+\delta(L_{\ell,\sigma},h)s_\ell^\mu\big(
\delta(L_{k,\id},\bb\om_1^\mu(\nu_k))E_{\sigma s_\ell}(X_{\om_1})+\delta(L_{k,\id}^{-1},h)s_k^z E_{s_k\sigma s_\ell}(X_{\om_1})\big)\,.\end{multline*}
Note that 
\[\begin{array}{ll}
s_k^z\delta(L_{\ell,\sigma},\nu_k)=\delta(L_{\ell,s_k\sigma},\nu_k)
\,,&
s_k^z\delta(L_{\ell,\sigma},h)=\delta(L_{\ell,s_k\sigma},h)\,,
\\ \\
s_\ell^\mu\delta(L_{k,\sigma},\bb\om_1^\mu(\nu_k))=\delta(L_{k,\sigma},s_l^\mu\bb\om_1^\mu(\nu_k))\,,
&
s_\ell^\mu\delta(L_{k,\id}^{-1},h)=\delta(L_{k,\id}^{-1},h)\,,
\end{array}\]
hence rearranging the expression we obtain
\begin{multline*}
\delta(L_{k,\id},s_\ell^\mu\bb\om_1^\mu(\nu_k))\big(
\delta(L_{\ell,\sigma},\nu_\ell)s_\ell^\mu E_\sigma(X_{\om_1})+
\delta
(L_{\ell,\sigma},h)s_\ell^\mu E_{\sigma s_\ell}(X_{\om_1})\big)
\\
+\delta(L_{k,\id}^{-1},h)
s_k^z\big(\delta(L_{\ell,s_k\sigma},\nu_\ell)s_\ell^\mu  E_{s_k\sigma}(X_{\om_1}))
+
\delta(L_{\ell,s_k\sigma},h)s_\ell^\mu  E_{s_k\sigma s_\ell}(X_{\om_1})\big)
=\end{multline*}

\begin{multline*}=\delta(L_{k,\id},s_\ell^\nu\bb\om_1^\mu(\nu_k))
E_\sigma(X_{\om_1 s_\ell})+
\delta(L_{k,\id}^{-1},h)
s_k^z E_{s_k\sigma}(X_{\om_1 s_\ell})=\\
=\delta(L_{k,\id},\bb\om^\mu(\nu_k))
E_\sigma(X_{\om})+
\delta(L_{k,\id}^{-1},h)
s_k^z E_{s_k\sigma}(X_{\om})\,.
\end{multline*}

\end{proof}

\noindent This completes the proof of Theorem \ref{thm:EsameW}.

\begin{proposition} 
If $\ell(s_k\om)=\ell(\om)-1$, then
$$\delta\big(\frac{z_{k + 1}}{z_k},\frac{ \mu_{\om^{-1}(k + 1)}}{\mu_{\om^{-1}(k)}}\big) 
\cdot E_\sigma(X_\om) + 
\delta\big(\frac{z_k }{z_{k+1}},h\big)\cdot E_{s_k\sigma}(X_\om)=\delta\big(h,\frac{ \mu_{\om^{-1}(k + 1)}}{\mu_{\omega^{-1}(k)}}\big)
\delta\big(h,\frac{ \mu_{\om^{-1}(k )}}{\mu_{\om^{-1}(k+1)}}\big)
E_\sigma(X_{s_kw})\,.$$\end{proposition}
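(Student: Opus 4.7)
Since $\ell(s_k\omega) = \ell(\omega) - 1$, the element $\omega' := s_k\omega$ satisfies $\ell(s_k\omega') = \ell(\omega') + 1$, so Proposition~\ref{pro:ERrecursion} applies to $\omega'$ and will let us invert the length-increasing recursion. Using that $(s_k\omega)^{-1}(k{+}1) = \omega^{-1}(k)$ and $(s_k\omega)^{-1}(k) = \omega^{-1}(k{+}1)$, and abbreviating $A := \mu_{\omega^{-1}(k+1)}/\mu_{\omega^{-1}(k)}$, $a := z_{k+1}/z_k$, Proposition~\ref{pro:ERrecursion} gives
\begin{equation*}
E_\sigma(X_\omega) \;=\; \delta(a, A^{-1})\,E_\sigma(X_{s_k\omega}) \;+\; \delta(a^{-1}, h)\,s_k^z E_{s_k\sigma}(X_{s_k\omega}).
\end{equation*}
Replacing $\sigma$ by $s_k\sigma$ throughout and then applying the involution $s_k^z$ produces the companion identity
\begin{equation*}
s_k^z E_{s_k\sigma}(X_\omega) \;=\; \delta(a^{-1}, A^{-1})\,s_k^z E_{s_k\sigma}(X_{s_k\omega}) \;+\; \delta(a, h)\,E_\sigma(X_{s_k\omega}).
\end{equation*}
(To match both Proposition~\ref{pro:ERrecursion} and Formula~\ref{R-intro}, the second term on the left-hand side of the statement must really carry an $s_k^z$-twist, i.e.\ be read as $s_k^z E_{s_k\sigma}(X_\omega)$; the plan establishes the identity in that form.)

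Plugging both expressions into the target left-hand side $\delta(a,A)\,E_\sigma(X_\omega) + \delta(a^{-1},h)\,s_k^z E_{s_k\sigma}(X_\omega)$ and regrouping, only the two combinations $E_\sigma(X_{s_k\omega})$ and $s_k^z E_{s_k\sigma}(X_{s_k\omega})$ appear. The coefficient of the latter is
\[
\delta(a^{-1},h)\bigl[\delta(a,A) + \delta(a^{-1}, A^{-1})\bigr],
\]
which vanishes by the antisymmetry $\delta(x^{-1}, y^{-1}) = -\delta(x, y)$ that follows at once from $\vt(u^{-1}) = -\vt(u)$. The coefficient of $E_\sigma(X_{s_k\omega})$ equals $\delta(a,A)\delta(a,A^{-1}) + \delta(a^{-1}, h)\delta(a, h)$, which, using the same antisymmetry in the form $\delta(a^{-1},h) = -\delta(a,h^{-1})$, rewrites as $\delta(a,A)\delta(a,A^{-1}) - \delta(a,h)\delta(a,h^{-1})$.

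The claim is therefore reduced to the four-term theta identity
\[
\delta(a,A)\,\delta(a,A^{-1}) - \delta(a,h)\,\delta(a,h^{-1}) \;=\; \delta(h,A)\,\delta(h,A^{-1}),
\]
and recognising this is the main (and essentially only) obstacle. After clearing $\vt$-denominators it becomes a standard four-term relation among theta values, namely the $d\to 0$ degeneration of Fay's trisecant identity~\eqref{trisecant}; the very same identity already appeared, slightly disguised, in the verification of $C_k\circ C_k = \kappa_k(\lambda)\,\id$ in Theorem~\ref{th:kappa}. As stressed in Section~\ref{sec:localization}, theta identities of this type come \emph{gratis} from the Borisov--Libgober construction (both sides of the claim are bona fide local classes by construction, which forces the identity); alternatively one verifies it directly from~\eqref{trisecant} by letting one of its four variables tend to zero. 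This completes the proof.
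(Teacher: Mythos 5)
Your proof is correct, and it carries out in full what the paper only sketches. The paper's proof is brief: for $G=\GL_n$ it quotes the first line of the weight-function R-matrix relation \eqref{eq:Rmatrix} via Theorem~\ref{thm:EsameW}; for general $G$ it refers to ``the same direct calculation as in the proof of Theorem~\ref{th:kappa}.'' Your argument is that calculation, organized as an algebraic deduction from the length-increasing case Proposition~\ref{pro:ERrecursion}: apply it at $\omega'=s_k\omega$, and also with $\sigma$ replaced by $s_k\sigma$ followed by $s_k^z$, so as to express $E_\sigma(X_\omega)$ and $s_k^z E_{s_k\sigma}(X_\omega)$ in terms of $E_\sigma(X_{s_k\omega})$ and $s_k^z E_{s_k\sigma}(X_{s_k\omega})$; substitute into the target left-hand side; observe that the coefficient of $s_k^z E_{s_k\sigma}(X_{s_k\omega})$ vanishes by $\delta(x^{-1},y^{-1})=-\delta(x,y)$; and reduce the remaining coefficient to
\[
\delta(a,A)\,\delta(a,A^{-1}) - \delta(a,h)\,\delta(a,h^{-1}) \;=\; \delta(h,A)\,\delta(h,A^{-1}),
\qquad a=\tfrac{z_{k+1}}{z_k},\quad A=\tfrac{\mu_{\omega^{-1}(k+1)}}{\mu_{\omega^{-1}(k)}},
\]
which is \eqref{trisecant} with the fourth variable set to zero and is precisely the theta identity used in Theorem~\ref{th:kappa}. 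This route works uniformly for any reductive $G$ and bypasses the weight-function identification, a modest gain in self-containment over the paper's $\GL_n$ shortcut. Two small points. First, you are right that the second term of the stated left-hand side must carry the $s_k^z$-twist, i.e.\ be read as $s_k^z E_{s_k\sigma}(X_\omega)$, to be consistent with Proposition~\ref{pro:ERrecursion}, Formula~\ref{R-intro}, and the local R-matrix recursion in Section~\ref{sec:tale}; the printed statement omits it, and your reading is the correct one. Second, the remark that such theta identities come ``gratis'' from the Borisov--Libgober construction is a good heuristic for why the two recursions must be compatible, but it does not by itself certify the specific four-term identity; it is the explicit Fay verification that actually closes the argument, and you do supply it.
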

\begin{proof}
For $G=\GL_n$
this relation is a reformulation of the first line of the R-matrix relation \eqref{eq:Rmatrix}.  For general $G$ this statement follows from a direct calculation when $\om_1=s_k \om_2$, $\ell(\om_2)<\ell(\om_1)$, which is exactly the same as the proof of Theorem \ref{th:kappa}. 
\end{proof} 

To obtain Formula \ref{R-intro} given in the Introduction we multiply and divide by the elliptic Euler classes \eqref{eq:ellipticEuler}. Here, comparing with the proof of Formula \ref{BS-intro}, the minus sign does not appear because additionally we have the action of $s_k^z$ compensating the sign.

\section{Transformation properties of $\Ek(X_{\om})$}

Having proved that for $\GL_n$ the elliptic classes of Schubert varieties are represented by weight functions, we can conclude that all proven properties of weight functions hold for those elliptic classes. One key property of weight functions is a strong constraint on their transformation properties. Hence, such a constraint holds for $\Ek(X_\om)$ in the $\GL_n$ case. Motivated by this fact we will prove an analogous theorem on the transformation properties of elliptic classes of Schubert varieties for any reductive group $G$. 
	
In Section \ref{sec:transformations} we recall how to encode transformation properties of theta-functions by quadratic forms and recall the known transformation properties of weight functions. In Section~\ref{sec:axiom} we put that statement in context be recalling a whole set of other properties such that together they characterize weight functions. In Section \ref{sec:trans_general} we generalize the transformation property statement to arbitrary $G$. So the new result is only in Section \ref{sec:trans_general}, the preceding sections are only motivations for that. 

\subsection{Transformation properties of the weight function} \label{sec:transformations}
Consider functions $\C^p\times \HH \to \C$, where $\HH$ is the upper half space, and the variable in $\HH$ is called $\tau$.  
Let $M$ be a $p\times p$ symmetric integer matrix, which we identify with the quadratic form $x\mapsto x^TMx$. 
We  say that the function $f:\C^p \times \HH \to \C$ has transformation property $M$, if 
\begin{align*}
f(x_1,\ldots,x_{j-1},\,x_j+2\pi i,\,x_{j+1},\ldots,x_p)=& (-1)^{M_{jj}} f(x), \\
f(x_1,\ldots,x_{j-1},\,x_j+2\pi i\tau,\,x_{j+1},\ldots,x_p)=&(-1)^{M_{jj}} \eee^{-\text{$\Sigma_k$} M_{jk}x_k-\pi i \tau M_{jj}} f(x).
\end{align*}

For a quadratic form $M$ there one may define a line bundle $\mathcal L(M,0)$ over the $p$th power of the elliptic curve $\C/2\pi i\langle 1,\tau\rangle$ such that the sections of $\mathcal L(M,0)$ are identified with functions with transformation properties $M$, see \cite[Section 6]{RTV}.

Recall from Section \ref{sec:theta} that we set up theta function in two ways, $\vt(\ )$ 
in ``multiplicative variables'', and $\theta(\ )$ 
in ``additive variables''. The transformation property being the quadratic form $M$ is always meant in the additive variables, but naming the quadratic function we use the variable names most convenient for the situation. For example, the function $\vt(x)$ has transformation property $M=(1)$ (or equivalently, the quadratic form $x^2$), because of \eqref{theta_trans} .

Iterating this fact one obtains that for integers $r_i$ the function 
$\vt(\prod_{i=1}^p x_i^{r_i} )$ has transformation property $(\sum_{i=1}^p r_ix_i)^2$. For products of functions the quadratic form of their transformation properties add. Hence, for example, the function $\delta(a,b)$  of \eqref{def:delta} has transformation property $(a+b)^2-a^2-b^2=2ab$. Through careful analysis of the combinatorics of the weight functions defined above (or from \cite[Lemmas 6.3, 6.4]{RTV} by carrying out the necessary convention changes) we obtain

\begin{proposition}
The weight function $\wwh_\om$ has transformation property 
\begin{align}\label{Qpi}
\notag Q(\om)=&\mathop{\sum_{1\leq i,j\leq n-1}}_{\om(i)<\om(j)} 2h(\gamma_j-\gamma_i) + \sum_{i=1}^{n-1}\sum_{j=1}^{\om(i)-1} 2h(z_j-\gamma_i) \\
                    &+ \sum_{i=1}^{n-1} 2(z_{\om(i)}-\gamma_i) (P_{\om,i}h+\mu_n-\mu_i) \\
\notag           &+ \sum_{i=1}^{n-1} \sum_{j=1}^n (z_j-\gamma_i)^2-\mathop{\sum_{1\leq i,j\leq n-1}}_{i<j} (\gamma_i-\gamma_j)^2,
\end{align}
for $P_{\om,i}=1$ if $\om(i)<\om(n)$ and $P_{\om,i}=0$ otherwise. \qed
\end{proposition}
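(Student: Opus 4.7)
The proof will be a direct bookkeeping computation based on the multiplicativity of transformation properties. The two basic building blocks are that $\vt(x)$ has transformation property $x^2$, and $\delta(a,b)$ has transformation property $2ab$ (viewed as quadratic forms in additive variables); moreover, for products the quadratic forms add, and for quotients they subtract. A second preliminary observation is that the symmetrization operator $\Sym_{t^{(k)}}$ preserves the transformation property: the defining expression of $\ww_\om$ shows that each $t^{(k)}$-summand is a section of the same line bundle over the elliptic variety, because the transformation property of a single summand is manifestly symmetric under permutation of the $t^{(k)}_a$'s (each pair $(a,b)$ enters symmetrically through $(t^{(k+1)}_c - t^{(k)}_a)^2$ and through $\delta$ factors of the form $2h(t^{(k)}_b - t^{(k)}_a)$). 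Hence I may compute the transformation property of $\ww_\om$ from a single summand of the symmetrizer.

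I would then read off the contributions factor-by-factor from the definition of $U_\om$. Each $\vt(t^{(k+1)}_c/t^{(k)}_a)$ contributes $(t^{(k+1)}_c - t^{(k)}_a)^2$. The three cases in the definition of $\psi_{\om,k,a,c}$ give rise to three distinct $\delta$-contributions: when $\om^{(k+1)}_c<\om^{(k)}_a$, one gets $2h(t^{(k+1)}_c-t^{(k)}_a)$; when $\om^{(k+1)}_c=\om^{(k)}_a$, one gets $2(t^{(k+1)}_c-t^{(k)}_a)\bigl((1-c_\om(k,a))h+\mu_{k+1}-\mu_{j_\om(k,a)}\bigr)$; and when $\om^{(k+1)}_c>\om^{(k)}_a$, no $\delta$-contribution. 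The internal product $\prod_{b>a}\delta(t^{(k)}_b/t^{(k)}_a,h)$ contributes $\sum_{b>a}2h(t^{(k)}_b-t^{(k)}_a)$, and the denominator $\prod_{i,j}\vt(ht^{(k)}_i/t^{(k)}_j)$ must be subtracted. The overall prefactor $(\vt(h)/\vt'(1))^{\dim G}$ contributes a constant $h^2$-piece that cancels against the diagonal $i=j$ piece of the denominator.

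The next step is to apply the substitution $t^{(k)}_a=\gamma_a$ for $k=1,\dots,n-1$ and $t^{(n)}_a=z_a$ to pass from $\ww_\om$ to $\wwh_\om$. Under this collapse, the $k$-indexed sums telescope: the quadratic contributions involving internal $t^{(k)}$'s at different levels $k$ collapse into single sums over the index $i=1,\dots,n-1$, because the middle level variables $t^{(k)}_a$ all become the same $\gamma_a$. The ``diagonal'' terms coming from the equal-case $\om^{(k+1)}_c=\om^{(k)}_a$ track the position of $z_{\om(i)}$ relative to $\gamma_i$ and give the term $2(z_{\om(i)}-\gamma_i)(P_{\om,i}h+\mu_n-\mu_i)$, with $P_{\om,i}$ recording whether the relevant $c_\om$-switch was $0$ or $1$ at the last level $k=n-1$. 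The remaining contributions organize precisely into the three summands of $Q(\om)$ in~\eqref{Qpi}.

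The main obstacle will be the combinatorial bookkeeping: matching the factor-by-factor contributions to the compact $\om$-indexed sums $\sum_{\om(i)<\om(j)}$ and $\sum_{j=1}^{\om(i)-1}$, and in particular correctly identifying the asymmetric term involving $P_{\om,i}h+\mu_n-\mu_i$, which reflects the special role played by the level $k=n-1$ and the position $\om(n)$. This is exactly the content of \cite[Lemmas 6.3, 6.4]{RTV} once the conventions are translated (our $\vt$ versus their theta, our multiplicative $\mu_i=h^{-\lambda_i}$ versus their additive dynamical variables, and the passage from $\ww_\om$ to $\wwh_\om$ via the substitution $t^{(k)}_a\mapsto \gamma_a$); with these translations the result follows.
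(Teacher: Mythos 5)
Your proposal takes the same route as the paper, which for this proposition simply invokes a ``careful analysis of the combinatorics'' or a citation to \cite[Lemmas 6.3, 6.4]{RTV} followed by \qed; so the real content is whether your bookkeeping outline would in fact go through. Most of it would, but there is one concrete misstatement that would trip up the computation if carried out literally: the claim that the transformation property of a single $\Sym_{t^{(k)}}$-summand is ``manifestly symmetric under permutation of the $t^{(k)}_a$'s.'' It is not. The contribution from the internal factors $\prod_{b>a}\delta(t^{(k)}_b/t^{(k)}_a,h)$ is the bilinear form $\sum_{a<b}2h\,(t^{(k)}_b-t^{(k)}_a)$, which changes sign under the transposition $a\leftrightarrow b$ and more generally is not $S_k$-invariant: its linear part in the $t^{(k)}_a$'s has coefficient $2h(2a-k-1)$, which visibly depends on the index $a$. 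Similarly, the $\psi$-factors carry an $a$-dependence through the case split $\om^{(k+1)}_c \lessgtr \om^{(k)}_a$ (concretely, the number of first-case factors equals $c(a)-1=a+c_\om(k,a)-1$), which is also not obviously permutation-symmetric on its own.

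The invariance of the total quadratic form under $S_1\times\cdots\times S_{n-1}$ is of course true --- it has to be, for $\wwh_\om$ to have a well-defined transformation property --- but it holds only after the $\psi$-contributions and the internal $\delta$-contributions are combined, and the $a$-dependence cancels precisely because of the combinatorial identity $c(a)=a+c_\om(k,a)$ and its interaction with the adjacent level $k-1$. Verifying this cancellation is exactly the non-obvious part of the ``careful analysis,'' and it is what \cite[Lemmas 6.3, 6.4]{RTV} actually contain; it cannot be dismissed as manifest. If you replace the parenthetical justification with an explicit check that the full level-$k$ contribution to the quadratic form is $S_k$-invariant (or simply cite that this invariance is established in \cite{RTV}), the rest of your outline --- reading off contributions factor by factor, subtracting the denominator, applying the collapse $t^{(k)}_a\mapsto\gamma_a$, $t^{(n)}_a\mapsto z_a$, and organizing into the three displayed summands with the boundary term at $k=n-1$ producing $P_{\om,i}$ --- is the right structure and matches what the paper intends.
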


\begin{example} \rm
We have 
\begin{align*}
Q(12)&=2(z_1-\gamma_1)(h+\mu_2-\mu_1)+(z_1-\gamma_1)^2+(z_2-\gamma_1)^2,\\
Q(21)&=2h(z_1-\gamma_1)+2(z_2-\gamma_1)(\mu_2-\mu_1)+(z_1-\gamma_1)^2+(z_2-\gamma_1)^2,
\end{align*}
in accordance with formulas \eqref{WWex}.
\end{example}

\begin{corollary} \label{cor:transfom_recursive}
We have
\begin{align*} 
Q(\om)-Q(\om s_k)=& 2(z_{\om(k)}-z_{\om(k+1)})(\mu_{k+1}-\mu_k), \\
Q(\om)-Q(s_k\om)=& 2(z_{k}-z_{k+1})(\mu_{\om^{-1}(k+1)}-\mu_{\om^{-1}(k)}).
\end{align*}
\end{corollary}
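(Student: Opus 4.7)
The strategy is to extract each of the two identities from one of the two recursions that the weight function $\wwh_\om$ satisfies, exploiting the elementary observation that $\delta(a,b)$ has transformation property equal to the quadratic form $2(\log a)(\log b)$, so that a factor $\delta(A/A', B/B')$ contributes $2(A-A')(B-B')$ in additive variables. The second identity will come at once from the R-matrix recursion, the first from the Bott-Samelson recursion together with a short direct computation with \eqref{Qpi}.

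For the second identity, I would assume without loss of generality $\ell(s_k\om) = \ell(\om)+1$; Corollary~\ref{cor:R-recur} then expresses
\[
\wwh_{s_k\om} \;=\; \delta\!\left(\tfrac{z_{k+1}}{z_k},\,\tfrac{\mu_{\om^{-1}(k+1)}}{\mu_{\om^{-1}(k)}}\right)\wwh_\om \;+\; \delta\!\left(\tfrac{z_k}{z_{k+1}},\,h\right) s_k^z \wwh_\om.
\]
The left-hand side has transformation property $Q(s_k\om)$, while the first summand's is $Q(\om) + 2(z_{k+1}-z_k)(\mu_{\om^{-1}(k+1)}-\mu_{\om^{-1}(k)})$. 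Equating and rearranging gives the claim; the case $\ell(s_k\om)=\ell(\om)-1$ is obtained by swapping the roles of $\om$ and $s_k\om$.

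For the first identity, the Bott-Samelson recursion, carried over to $\wwh_\om$ via Theorem~\ref{thm:EsameW} and Formula~\ref{BS-intro}, reads (for $\ell(\om s_k)=\ell(\om)+1$)
\[
\wwh_{\om s_k} \;=\; \delta\!\left(\tfrac{\gamma_{k+1}}{\gamma_k},\,\tfrac{\mu_{k+1}}{\mu_k}\right) s_k^\mu \wwh_\om \;-\; \delta\!\left(\tfrac{\gamma_{k+1}}{\gamma_k},\,h\right) s_k^\gamma s_k^\mu \wwh_\om,
\]
where $s_k^\mu$ swaps $\mu_k\leftrightarrow\mu_{k+1}$ and $s_k^\gamma$ swaps $\gamma_k\leftrightarrow\gamma_{k+1}$. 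Reading off the transformation property of the first summand gives $Q(\om s_k) = 2(\gamma_{k+1}-\gamma_k)(\mu_{k+1}-\mu_k) + s_k^\mu Q(\om)$, so the claimed identity reduces to the equality
\[
Q(\om) - s_k^\mu Q(\om) \;=\; 2(\mu_{k+1}-\mu_k)\bigl[(z_{\om(k)}-z_{\om(k+1)}) + (\gamma_{k+1}-\gamma_k)\bigr].
\]
This last identity is straightforward from \eqref{Qpi}: the $\mu$-variables occur only in the third sum of $Q(\om)$ and only linearly, so the difference on the left-hand side involves only the $i\in\{k,k+1\}$ terms, and a short algebraic manipulation yields the stated right-hand side.

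The main obstacle is the boundary case $k=n-1$ of the first identity. There the Bott-Samelson step invokes $\LL{\alpha_{n-1}} = \gamma_n/\gamma_{n-1}$, although $\gamma_n$ does not appear in $\wwh_\om$ directly; moreover, the indicator $P_{\om,i}$ in \eqref{Qpi} depends on $\om(n)$, which is shifted by the passage $\om\mapsto \om s_{n-1}$ for every $i$, not only for $i\in\{k,k+1\}$. Handling this case cleanly requires either exploiting the K-theoretic relation $\sum_i\gamma_i=\sum_i z_i$ coming from $\prod \gamma_i=\prod z_i$ to eliminate $\gamma_n$, or performing a more careful direct bookkeeping of how the $P_{\om,i}$-terms recombine; either way, the resulting contribution reduces to the stated right-hand side.
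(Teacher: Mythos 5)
Your argument for the \emph{second} identity is essentially correct and matches the alternative the paper itself mentions: the R-matrix relation~\eqref{eq:Rmatrix} (resp.\ Corollary~\ref{cor:R-recur}) holds for the weight functions \emph{as functions}, and since $\wwh_{s_k\om}$, $\wwh_\om$, $s_k^z\wwh_\om$, and $\delta(a,b)$ all have well-defined transformation properties, comparing the quadratic form of the left side with that of the first summand (and observing the two summands on the right must share a form) yields $Q(s_k\om)=Q(\om)+2(z_{k+1}-z_k)(\mu_{\om^{-1}(k+1)}-\mu_{\om^{-1}(k)})$. That is exactly what the paper calls the ``more conceptual proof for (only) the second line.''

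The argument for the \emph{first} identity, however, has a genuine gap. You invoke the Bott--Samelson recursion for $\wwh_\om$ and read off its transformation properties. But the paper emphasizes explicitly (Section~\ref{sec:tale}, right after~\eqref{uniRW}) that the Bott--Samelson recursion holds \emph{only for the cosets} $[\wwh_\om]\in K_\T(\F(n))$, not for the weight functions as honest functions of the free variables $(\gamma,z,\mu,h)$. A coset identity means the two sides differ by an element of the defining ideal, and such an error term need not have the same transformation property; hence one cannot deduce $Q(\om s_k)=2(\gamma_{k+1}-\gamma_k)(\mu_{k+1}-\mu_k)+s_k^\mu Q(\om)$ from the K-theoretic identity. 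This defect is visible precisely where you flag trouble: for $k=n-1$ your reduced claim $Q(\om)-s_k^\mu Q(\om)=2(\mu_{k+1}-\mu_k)\bigl[(z_{\om(k)}-z_{\om(k+1)})+(\gamma_{k+1}-\gamma_k)\bigr]$ would force the polynomial identity $\sum_i z_i=\sum_i\gamma_i$, which is a K-theoretic relation but not an identity of the free variables in which $Q$ lives. Using that relation to ``eliminate $\gamma_n$'' is therefore illegitimate: transformation properties are computed on the level of the actual section, not modulo the ideal. The paper sidesteps all of this by proving \emph{both} identities via a direct (if tedious) computation from~\eqref{Qpi}, with explicit casework on $k<n-1$ vs.\ $k=n-1$ and on whether $\om(k)<\om(n)$; that is the route you would also need to take for the first identity (and, incidentally, you leave the casework unfinished even after the reduction).
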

Note that neither line depends on the variables $\gamma_i$.
\begin{proof} 
Straightforward calculation based on formula \eqref{Qpi}, carried out separately in the few cases depending on whether $k<n-1$ or $k=n-1$, whether $\om(k)<\om(n)$ or not. 
Let us note that a more conceptual proof for (only) the second line can be obtained using the R-matrix relation~\eqref{eq:Rmatrix}.
\end{proof}

\subsection{Axiomatic characterization}\label{sec:axiom}
In this section we recall from \cite{RTV} a list of axioms that determine the weight function. One of the axioms is that they must have the transformation properties calculated above. Some other axioms include holomorphicity of some functions. Since in the definition of $\vt(x)$ the square root of $x$ appears, the domain of our functions is a suitable cover of $\C^N$ (or the domain of the induced section is a suitable cover of the product of elliptic curves).



The ``constant'' (not depending on $z$ variables)  
\[
\psi_\om=\psi_\om(\mu,h)=\vt(h)^{n(n-1)(n-2)/3}
\prod_{i<j}\left( \prod_{\om(i)<\om(j)} \vt\left( h\mu_j/\mu_i\right) \cdot \prod_{\om(i)>\om(j)} \vt(h) \vt\left(\mu_j/\mu_i\right) \right).
\]
plays a role below.


\begin{theorem}[\cite{RTV} Theorem 7.3, see also \cite{FRV} Theorem A.1] \label{thm:axioms} \ \\
 (I) The functions $\ww_{\om,\sigma}$ satisfy the properties:
\begin{itemize}
  \item[(1.1)] (holomorphicity) We have 
  \[ 
  \ww_{\om,\sigma}=\frac{1}{\psi_\om} \cdot \text{holomorphic function.}
  \]
  \item[(1.2)] (GKM relations) We have 
  \[
  \ww_{\om,\sigma s_k}|_{z_{\sigma(k)}=z_{\sigma(k+1)}}=\ww_{\om,\sigma}|_{z_{\sigma(k)}=z_{\sigma(k+1)}}.
  \]
  \item[(1.3)] (transformations) The transformation properties of $\ww_{\om,\sigma}$ are described by the quadratic form $Q(\om)_{|\gamma_i=z_{\sigma(i)}}$.
  \item[(2)]\label{item:smooth} (normalization)
  \[
  \ww_{\om,\om}=
  \prod_{i<j} \vt(z_{\om(j)}/z_{\om(i)})
  \mathop{\prod_{i<j}}_{\om(j)<\om(i)} \delta(z_{\om(j)}/z_{\om(i)},h).
  \]
  \item[(3.1)] (triangularity) if $\sigma\not\preccurlyeq \om$ in the Bruhat order then $\ww_{\om,\sigma}=0$.
  \item[(3.2)] (support)  if $\sigma\preccurlyeq \om$ in the Bruhat order then $\ww_{\om,\sigma}$ is of the form
  \[\frac{1}{\psi_\om} \cdot \mathop{\prod_{i<j}}_{\sigma(i)>\sigma(j)} \vt(z_{\sigma(j)}h/z_{\sigma(i)}) \cdot \text{holomorphic function}
  \] 
\end{itemize}
(II) These properties uniquely determine the functions $\ww_{\om,\sigma}$.  \qed
\end{theorem}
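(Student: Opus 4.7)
My plan is to handle Part (I) as a direct, if lengthy, verification from the explicit symmetrization formula for $\ww_\om$, and to put the real work into Part (II). The overall strategy for uniqueness will combine induction on Bruhat length with a dimension count for sections of a theta line bundle on a product of elliptic curves, cut down by the GKM and support conditions.

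For Part (I), I would verify the six properties one at a time from the definition and the substitution $\ww_{\om,\sigma}=\ww_\om|_{t^{(k)}_a=z_{\sigma(a)}}$. Holomorphicity (1.1) follows from the standard pole-cancellation inside each $\Sym_{t^{(k)}}$-sum (the apparent $\vt(t^{(k)}_i/t^{(k)}_j)$-poles are removed by anti-pairing of symmetrization terms), so the only surviving poles come from the $\vt(h\mu_j/\mu_i)$-factors in the $\delta(\,\cdot\,,h^{1-c}\mu/\mu)$ pieces of $U_\om$, which is exactly what $\psi_\om$ absorbs. The GKM relations (1.2) follow by pairing terms of the $\Sym$-sum under the transposition swapping the $\sigma(k)$ and $\sigma(k+1)$ indices, using the vanishing of $\vt$-factors on the wall $z_{\sigma(k)}=z_{\sigma(k+1)}$. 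The transformation law (1.3) is a bookkeeping exercise: each $\vt(f)$ contributes the quadratic form $f^2$ and each $\delta(a,b)$ contributes $2ab$, with an overall correction from the factor $\psi_\om^{-1}$; summing over the factors in the definition recovers $Q(\om)$. Normalization (2) follows because at $t^{(k)}_a=z_{\om(a)}$ all but one term of each $\Sym_{t^{(k)}}$ acquires a $\vt(1)=0$ factor, leaving just the aligned term. Triangularity (3.1) and support (3.2) come from the same vanishing analysis, tracing which $\vt(z_i/z_j)$-factors enforce the stated vanishing pattern when $\sigma\not\preccurlyeq\om$ or $\sigma(i)>\sigma(j)$.

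For Part (II), I would argue by induction on $\ell(\om)$. The base $\om=\id$ is immediate from (3.1) and (2). For the inductive step, assume a second family $\{f_{\om,\sigma}\}$ satisfies all axioms and coincides with $\ww$ at every shorter $\om'$. Set $g_\sigma=f_{\om,\sigma}-\ww_{\om,\sigma}$; then $g$ satisfies all the linear axioms, together with $g_\om=0$ (from (2)) and $g_\sigma=0$ for $\sigma\not\preccurlyeq\om$ (from (3.1)). The goal is to deduce $g\equiv 0$. My approach is to identify the space of tuples $\{g_\sigma\}$ satisfying (1.1), (1.2), (1.3) with global sections of a theta line bundle $\mathcal{L}(Q(\om),0)$ of known Euler characteristic on the product of elliptic curves parametrized by $(z,\mu,h)$, and then to impose the support divisibility (3.2) to cut the space down to a one-dimensional subspace, inside which the condition $g_\om=0$ forces vanishing.

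The main obstacle will be precisely this last dimension count: showing that after (1.1), (1.2), (1.3), (3.1) and (3.2) are imposed simultaneously the surviving subspace is exactly one-dimensional for each $\om$. Direct computation on the elliptic side is delicate; a fallback I would pursue is to degenerate to the trigonometric limit $q\to 0$, where the analogous characterization of trigonometric weight functions (equivalently, motivic Chern classes or stable envelopes) is already established in the literature, and then to lift uniqueness back to the elliptic case by a rigidity/deformation argument, using the fact that the relevant spaces of theta sections are flat deformations of their trigonometric limits. Once this rigidity step is in place the induction closes and the axiomatic characterization follows.
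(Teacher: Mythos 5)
The paper does not prove Theorem \ref{thm:axioms}: it is explicitly recalled from \cite{RTV} Theorem 7.3 (see also \cite{FRV} Theorem A.1), and the \textnormal{\qedsymbol} marking its end signals a cited result, not an in-text proof. So there is no ``paper's own proof'' to compare against; your proposal is best judged on its own merits as a proof of the cited statement.

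Part (I) of your sketch is along the lines of the standard verification in \cite{RTV} and is believable as stated, though the devil is in the bookkeeping (especially pole cancellation in the symmetrization and the detailed trace of which $\vt$-factors vanish to give triangularity). The real issue is Part (II). You correctly identify the crux --- showing that the axioms (1.1), (1.2), (1.3), (3.1), (3.2) together cut the space of tuples $\{g_\sigma\}$ to dimension one --- but the fallback you propose is not a workable substitute. Degenerating to $q\to 0$ destroys exactly the structure that makes an elliptic dimension count possible: axiom (1.3) is a quasi-periodicity/modularity constraint, and in the trigonometric limit it collapses to no constraint at all (or to a completely different kind of constraint, namely a polynomial degree bound, which is how the trigonometric uniqueness is actually proved). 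Because the axiom system changes nature in the limit, ``uniqueness in the trigonometric limit plus flatness of the space of theta sections'' does not imply elliptic uniqueness; flatness would have to be established in a family whose fibers at $q\to 0$ are cut out by the trigonometric, not elliptic, axioms, and this identification is precisely what the ``rigidity argument'' would need to furnish. In short, the fallback presupposes the hard statement rather than proving it. To close the gap you would need to argue directly on the elliptic side: given a tuple $\{g_\sigma\}$ satisfying (1.1)--(1.3), (3.1), (3.2) together with $g_\om=0$, use the transformation form $Q(\om)|_{\gamma_i=z_{\sigma(i)}}$ to view each $g_\sigma$ as a section of a concrete theta line bundle, and then count zeros (imposed by the boundary factors in (3.2) and the normalization) against the degree of that bundle to conclude $g_\sigma=0$; this is the mechanism used in \cite{RTV} and it does not pass through any degeneration.

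Separately, in Part (II) you only impose $g_\om=0$ and $g_\sigma=0$ for $\sigma\not\preccurlyeq\om$. Inductively you also know $f_{\om',\tau}=\ww_{\om',\tau}$ for all $\om'$ with $\ell(\om')<\ell(\om)$, but that knowledge is about \emph{other} permutations $\om'$, not about $g_\sigma$ for $\sigma\prec\om$, so it does not directly give additional vanishing of $g$. Your inductive framing therefore does not obviously feed into the dimension count; the actual argument treats each $\om$ on its own, with the $\sigma$-triangularity and the support divisibility supplying all of the zero constraints. This is a second place where your outline would need to be reorganized.
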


\noindent The axiom (1.3) may be replaced by the inductive property:
\begin{itemize}
\item[(1.3')] If $\om=\om's_k$ with $\ell(\om)=\ell(\om')+1$  
then the difference of the quadratic forms describing the transformation properties of 
$\ww_{\om',\sigma}$ 
and that of 
$\ww_{\om,\sigma}$  
is equal to 
$$2(z_{\om(k+1)}-z_{\om(k)})(\mu_{k+1}-\mu_k).$$
\end{itemize}

\subsection{Transformation properties for general $G$} \label{sec:trans_general}
A key axiom for the weight functions is (1.3'). Through Theorem \ref{thm:EsameW} it implies that the difference of the quadratic forms of $E_\sigma(X_{\om'},\lambda)$ and $E_\sigma(X_{\om},\lambda)$ is also $2(z_{\om(k+1)}-z_{\om(k)})(\mu_{k+1}-\mu_k)$.
Below, in Theorem \ref{simple_induction} we will prove that the generalization of this surprising property holds for general $G$. We will also see that this general argument (using the language of a general Coxeter groups) fits better the transformation properties than the combinatorics of weight functions. 

\smallskip

Let $\alpha$ and $\beta\in \t^*$ be two roots. The reflection of $\alpha$ about $\beta$ will be denoted by $\alpha^\beta=s_\beta(\alpha)$.
Define the functional $\mu_\alpha$ acting on $\t^*$
$$\mu_\alpha\in (\t^*)^*,\qquad
\mu_\alpha(\beta)=-\langle \beta, \alpha^\vme \rangle.$$
Note that we artificially introduce the minus sign to agree  with the previous conventions (the definition of $\LL{\lambda}$ and $\mu_k=h^{-\lambda_k}$ in the weight function).
The reflection $s_\alpha\in W$ acting on polynomial functions on $\t^*$ is denoted by $s_\alpha^\mu$
$$s^\mu_\alpha(f)=f\circ s_\alpha.$$
Define the generalized divided difference operation
$$d_\alpha(f)(x)=\frac{f(x)-f(s_\alpha(x))}{\mu_\alpha(x)}.$$
It satisfies the properties
\[
s^\mu_\alpha=s^\mu_{-\alpha},
\qquad 
d_\alpha=-d_{-\alpha}, 
\qquad
d_{\beta} \circ s^\mu_\alpha=s^\mu_\alpha \circ d_{\beta^\alpha},
\qquad
d_\beta(\mu_\alpha)=\langle \alpha,\beta^\vme  \rangle.
\]
In particular, we have $d_{\alpha}\mu_\alpha=\langle\alpha, \alpha^\vme  \rangle=2$.

Consider the vector space $\t\times\t^*\times \CC$. For a root $\alpha\subset \t^*$ the linear functional $z_\alpha\in (\t\times\t^*\times\CC)^*$ depends on the first coordinate by $\alpha$.
The functional $\mu_\alpha$ depends on the second coordinate, while $h$ depends on the third coordinate. For $\sigma\in W$ and $\alpha\in \t^*$ by $\sigma(\alpha)$ we understand the usual action of $\sigma$ on $t^*$.
\medskip

As usual, we keep fixed the positive roots and the simple roots. 
To each pair  ${\om,\sigma}\in W$ such that $\sigma\preccurlyeq \om$  we associate a quadratic form $M(\om,\sigma)$ such that $M(\id,\id)=0$ 
and inductively: If $\om=\om's_\alpha$ with $\ell(\om)=\ell(\om')+1$, then
\begin{equation}\label{def-qinduction}M(\om,\sigma)=\begin{cases}
s^\mu_\alpha(M(\om',\sigma))+\mu_\alpha\, z_{\sigma(\alpha)}&\text{ if }\sigma\preccurlyeq \om'\\
s^\mu_\alpha(M(\om',\sigma s_\alpha))-h\,z_{\sigma(\alpha)}&\text{ if }\sigma s_\alpha\preccurlyeq \om'.\end{cases}\end{equation}
If $\sigma\preccurlyeq \om'$ and $\sigma s_\alpha \preccurlyeq \om'$, then the cases of the definitions give the same quadratic form (this follows from the proof of Proposition \ref{prop88} below). Also, $\sigma\preccurlyeq \om$ implies that one of the above conditions holds.
\medskip

\begin{example}\rm Let $G=\GL_3$. We illustrate two ways of computing $M(s_1s_2s_1,id)$:
$$\begin{array}{cc}
 M(\id,\id)= & \phantom{aa}0 \\
&^{s_1}\downarrow\\
 M(s_1,\id)= & \boxed{\left(z_2-z_1\right) \left(\mu _2-\mu _1\right)}
   \\
&^{s_2}\downarrow\\
 M(s_1s_2,\id)= & \left(z_2-z_1\right) \left(\mu _3-\mu
   _1\right)+\boxed{\left(z_3-z_2\right) \left(\mu _3-\mu
   _2\right)} \\
&^{s_1}\downarrow\\
 M(s_1s_2s_1,id)= &\left(z_2-z_1\right) \left(\mu _3-\mu
   _2\right)+\left(z_3-z_2\right) \left(\mu _3-\mu
   _1\right)+\boxed{ \left(z_2-z_1\right) \left(\mu _2-\mu
   _1\right)}. \\
\end{array}
$$
But also
$$\begin{array}{cc}
M(\id,\id)= & 0 \\&^{s_1}\downarrow\\
M(s_1,s_1)= & \boxed{ h\left(z_1-z_2\right) }\\&^{s_2}\downarrow\\
M(s_1s_2,s_1)= & h \left(z_1-z_2\right)+\boxed{\left(z_3-z_1\right)
   \left(\mu _3-\mu _2\right)} \\&^{s_1}\downarrow\\
M(s_1s_2s_1,\id)= & h \left(z_1-z_2\right)+\left(z_3-z_1\right) \left(\mu
   _3-\mu _1\right)+
   \boxed{ h\left(z_2-z_1\right)}. \\
\end{array}
$$
In both cases we obtain $M(s_1s_2s_1,\id)=\left(z_3-z_1\right) \left(\mu
   _3-\mu _1\right)$.
One can check that presenting this permutation as $s_2s_1s_2$ and performing analogous computations we obtain the same result.
\end{example}

The inductive procedure of constructing the elliptic classes can be translated to a description of the associated quadratic form.
\begin{proposition}\label{pr:ellform} The quadratic form $2M(\om,\sigma)$  describes the transformation properties of $E_\sigma(X_\om,\lambda)$.\end{proposition}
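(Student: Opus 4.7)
The plan is to induct on $\ell(\om)$, with the base case $\om=\id$ handled trivially: $E_\sigma(X_\id,\lambda)\in\{0,1\}$ has trivial transformation form $0=2M(\id,\id)$.

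For the inductive step I would fix a reduced factorization $\om=\om's_\alpha$ with $\ell(\om')=\ell(\om)-1$ and apply Theorem \ref{th:mainiduction}:
\[
E_\sigma(X_\om,\lambda)=\delta(L_{\alpha,\sigma},h^{\langle\lambda,\alpha^\vme\rangle})\,E_\sigma(X_{\om'},s_\alpha\lambda)+\delta(L_{\alpha,\sigma},h)\,E_{\sigma s_\alpha}(X_{\om'},s_\alpha\lambda),
\]
where the first summand is non-zero exactly when $\sigma\preccurlyeq\om'$ and the second when $\sigma s_\alpha\preccurlyeq\om'$. To extract the transformation form of each factor I would use the basic rules that $\theta(u)$ has trans form $u^2$ and hence $\delta(a,b)$ has trans form $2ab$. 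The character $L_{\alpha,\sigma}$ of $\LL{\alpha}$ at $x_\sigma$ is, additively, $-z_{\sigma(\alpha)}$, while $h^{\langle\lambda,\alpha^\vme\rangle}$, under the identification forced by $\mu_k=h^{-\lambda_k}$ together with $\mu_\alpha(\beta)=-\langle\beta,\alpha^\vme\rangle$, becomes the linear form $\mu_\alpha$ in the $\mu$-variables. Consequently $\delta(L_{\alpha,\sigma},h^{\langle\lambda,\alpha^\vme\rangle})$ contributes trans form $2\mu_\alpha z_{\sigma(\alpha)}$, and $\delta(L_{\alpha,\sigma},h)$ contributes $-2h\,z_{\sigma(\alpha)}$.

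By the inductive hypothesis $E_\sigma(X_{\om'},\lambda)$ and $E_{\sigma s_\alpha}(X_{\om'},\lambda)$ have trans forms $2M(\om',\sigma)$ and $2M(\om',\sigma s_\alpha)$; the substitution $\lambda\mapsto s_\alpha\lambda$ applies $s_\alpha^\mu$ to each. Summing the contributions factor-by-factor, the first summand acquires trans form $2s_\alpha^\mu M(\om',\sigma)+2\mu_\alpha z_{\sigma(\alpha)}$ and the second acquires $2s_\alpha^\mu M(\om',\sigma s_\alpha)-2h\,z_{\sigma(\alpha)}$; these are exactly the two clauses of $2M(\om,\sigma)$ as defined in \eqref{def-qinduction}. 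When only one clause applies, only the corresponding summand is present and the induction goes through; when both apply, both summands carry the same transformation form $2M(\om,\sigma)$, and so does their sum.

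The principal obstacle is the consistency statement asserted after \eqref{def-qinduction}: when both $\sigma\preccurlyeq\om'$ and $\sigma s_\alpha\preccurlyeq\om'$ hold, the two clauses must yield the same quadratic form, i.e.\ $s_\alpha^\mu M(\om',\sigma)+\mu_\alpha z_{\sigma(\alpha)}=s_\alpha^\mu M(\om',\sigma s_\alpha)-h\,z_{\sigma(\alpha)}$. The paper defers this to Proposition \ref{prop88}, which I would invoke here; modulo that well-definedness, the remaining work is bookkeeping the additive-versus-multiplicative conventions, in particular confirming the identification $h^{\langle\lambda,\alpha^\vme\rangle}\leftrightarrow\mu_\alpha$ as linear forms in the additive $\mu$-variables.
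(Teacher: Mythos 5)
Your proposal is correct and follows essentially the same route as the paper: base case $\om=\id$, inductive step via Theorem \ref{th:mainiduction}, reading off the transformation forms $2\mu_\alpha z_{\sigma(\alpha)}$ and $-2h\,z_{\sigma(\alpha)}$ of the boundary and internal $\delta$-factors, and twisting the inductive hypothesis by $s_\alpha^\mu$. The only cosmetic difference is that you make explicit the well-definedness issue when both clauses of \eqref{def-qinduction} apply, which the paper addresses in the discussion following that definition (deferring to Proposition \ref{prop88}) rather than inside the proof of Proposition \ref{pr:ellform}.
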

\begin{proof} If $\om=\id$ then $$E_\sigma(X_{\id},\lambda)=\begin{cases}1&\text{ if } \id=\sigma\,\\ 0&\text{ if } \id\neq\sigma,\end{cases}$$
hence the associated form at $(\id,\id)$ is 0. By Theorem \ref{th:mainiduction}, when passing from $\om'$ to $\om$ the elliptic class changes by the factor $\delta^{in}_k=\delta(L_k,h)$, which is at the point $\sigma$ has the transformation properties $2z_{-\sigma(\alpha_k)} h=-2z_{\sigma(\alpha_k)}h$ (since $L_k=\LL{\alpha_k}$) or by the factor $\delta^{bd}_k=\delta(L_k,h^{\langle\lambda,\alpha_k^\vme\rangle})$  which at the point $\sigma$  has the transformation properties $2z_{-\alpha_k}\mu_{-\alpha_k}=2z_{\sigma(\alpha_k)}\mu_{\alpha_k}$.
\end{proof}
\begin{proposition}\label{pr:center}The quadratic form at the smooth point of the cell is equal to 
$$M(\om,\om)=h\sum_{\alpha\,\in\,\Sigma_+\cap\,\om(\Sigma_-)}  z_{\alpha},$$
where $\Sigma_\pm$ denotes the set of positive/negative roots. The roots appearing in the summation are the tangent weights of $X_\om$ at $x_\om$.\end{proposition}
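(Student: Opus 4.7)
The plan is to proceed by induction on $\ell(\omega)$, using the recursive definition \eqref{def-qinduction} with $\sigma=\omega$. When $\omega=\id$ the set $\Sigma_+\cap\id(\Sigma_-)=\Sigma_+\cap\Sigma_-=\emptyset$ and by definition $M(\id,\id)=0$, so the base case is immediate.

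For the inductive step, write $\omega=\omega' s_\alpha$ with $\alpha$ a simple root and $\ell(\omega)=\ell(\omega')+1$. Taking $\sigma=\omega$ in \eqref{def-qinduction}, the first branch does not apply (since $\omega\not\preccurlyeq\omega'$), while $\sigma s_\alpha=\omega'$ trivially lies below $\omega'$, so the second branch gives
\[
M(\omega,\omega)=s^{\mu}_\alpha\bigl(M(\omega',\omega')\bigr)-h\,z_{\omega(\alpha)}.
\]
By the inductive hypothesis $M(\omega',\omega')=h\sum_{\beta\in\Sigma_+\cap\omega'(\Sigma_-)}z_\beta$, and as this expression involves neither $\mu$-variables, the action of $s^{\mu}_\alpha$ leaves it unchanged. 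Hence
\[
M(\omega,\omega)=h\sum_{\beta\in\Sigma_+\cap\omega'(\Sigma_-)}z_\beta-h\,z_{\omega(\alpha)}.
\]

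It remains to compare the two positive-root sets. Since $\alpha$ is simple and $\ell(\omega's_\alpha)=\ell(\omega')+1$, the root $\omega'(\alpha)$ is positive. Using $s_\alpha(\Sigma_-)=(\Sigma_-\setminus\{-\alpha\})\cup\{\alpha\}$ and applying $\omega'$,
\[
\omega(\Sigma_-)=\bigl(\omega'(\Sigma_-)\setminus\{-\omega'(\alpha)\}\bigr)\cup\{\omega'(\alpha)\}.
\]
Because $-\omega'(\alpha)\in\Sigma_-$ it does not contribute to $\Sigma_+\cap\omega'(\Sigma_-)$, while $\omega'(\alpha)\in\Sigma_+$ is a genuinely new element, so
\[
\Sigma_+\cap\omega(\Sigma_-)=\bigl(\Sigma_+\cap\omega'(\Sigma_-)\bigr)\sqcup\{\omega'(\alpha)\}.
\]
Finally $\omega(\alpha)=\omega's_\alpha(\alpha)=-\omega'(\alpha)$, so $-h\,z_{\omega(\alpha)}=h\,z_{\omega'(\alpha)}$ by the linearity $z_{-\gamma}=-z_\gamma$, and substituting gives exactly $M(\omega,\omega)=h\sum_{\beta\in\Sigma_+\cap\omega(\Sigma_-)}z_\beta$, completing the induction.

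For the tangent-weight statement, recall that the Schubert cell $X^\circ_\omega=B\cdot x_\omega$ is open in $X_\omega$, so $T_{x_\omega}X_\omega=T_{x_\omega}X^\circ_\omega$. The stabilizer of $x_\omega$ in $B$ is $B\cap\omega B\omega^{-1}$, whose Lie algebra has $\T$-weights $\Sigma_+\cap\omega(\Sigma_+)$. Consequently the weights on $T_{x_\omega}X^\circ_\omega\cong\mathfrak b/(\mathfrak b\cap\mathrm{Ad}(\omega)\mathfrak b)$ are precisely $\Sigma_+\setminus(\Sigma_+\cap\omega(\Sigma_+))=\Sigma_+\cap\omega(\Sigma_-)$, matching the summation above (and incidentally yielding the correct count $\ell(\omega)=\dim X_\omega$). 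The only delicate point in the whole argument is the sign bookkeeping in the comparison of $\omega(\Sigma_-)$ with $\omega'(\Sigma_-)$, which is handled by the simple-root condition $\omega'(\alpha)>0$.
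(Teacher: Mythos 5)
Your proof is correct, and it takes a genuinely different route from the paper's. The paper's proof is a one-line geometric argument: it observes that at the smooth point $x_\om$ the boundary divisor $\partial X_\om$ is absent, so by~\eqref{EllResLoc1} the localized class $E_\om(X_\om,\lambda)$ is simply the product $\prod_k\delta(t_k,h)$ over the tangent characters $t_k$, and since each factor $\delta(x,h)$ contributes transformation form $2xh$, one reads off $2M(\om,\om)=2h\sum z_\alpha$ immediately. You instead work purely inside the combinatorial framework: you run an induction on $\ell(\om)$ using the recursive definition~\eqref{def-qinduction} with $\sigma=\om$ (which forces the second branch), and you handle the inductive step by the root-set identity $\Sigma_+\cap\om(\Sigma_-)=(\Sigma_+\cap\om'(\Sigma_-))\sqcup\{\om'(\alpha)\}$ together with the sign flip $\om(\alpha)=-\om'(\alpha)$; your treatment of the tangent-weight statement via $T_{x_\om}X^\circ_\om\cong\mathfrak b/(\mathfrak b\cap\mathrm{Ad}(\tilde\om)\mathfrak b)$ is likewise an explicit argument where the paper takes the fact as known. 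What your approach buys is self-containedness within the algebra of the forms $M(\om,\sigma)$ without invoking the elliptic-class formula; what the paper's approach buys is brevity and the fact that it works for \emph{any} fixed point at once, not just the smooth one, once one knows the local factorization. Both are valid.
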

\begin{proof}At the smooth point $x_\om$  the localized elliptic class $E_\om(X_\om,\lambda)$is given by the product of $\delta$ functions and the transformation matrix for $\delta(x,h)$ is equal to $2xh$ by \eqref{EllResLoc1}.\end{proof}

\begin{proposition} \label{prop88}
Suppose $\sigma\preccurlyeq \om$, then
$$d_{\beta}(M(\om,\sigma))=z_{\sigma(\beta)}-z_{\om(\beta)}\,.$$
\end{proposition}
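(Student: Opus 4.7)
The plan is to induct on $\ell(\om)$. The base case $\om=\sigma=\id$ reduces to $d_\beta(0)=0=z_\beta-z_\beta$. For the inductive step I write $\om=\om's_\alpha$ with $\alpha$ a simple root and $\ell(\om')=\ell(\om)-1$; then at least one of $\sigma\preccurlyeq\om'$ or $\sigma s_\alpha\preccurlyeq\om'$ holds, and I apply $d_\beta$ to the corresponding clause of the defining recursion \eqref{def-qinduction}.

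The only tools needed are those already gathered in the section: the commutation $d_\beta\circ s_\alpha^\mu=s_\alpha^\mu\circ d_{\beta^\alpha}$; the fact that $s_\alpha^\mu$ fixes every $z_\gamma$ and $h$ (they depend on coordinates other than $\t^*$); the twisted Leibniz identity, which in particular gives $d_\beta(fg)=g\cdot d_\beta f$ whenever $g$ is $s_\beta^\mu$-invariant; and the linear reflection identity $z_{\sigma(\beta)}-z_{\sigma(s_\alpha\beta)}=z_{\sigma(\beta-s_\alpha\beta)}=\langle\beta,\alpha^\vee\rangle\,z_{\sigma(\alpha)}$. In the first case ($\sigma\preccurlyeq\om'$), applying $d_\beta$ to $M(\om,\sigma)=s_\alpha^\mu M(\om',\sigma)+\mu_\alpha z_{\sigma(\alpha)}$ and invoking the inductive hypothesis for $\om'$ rewrites the first summand as $s_\alpha^\mu(z_{\sigma(\beta^\alpha)}-z_{\om'(\beta^\alpha)})=z_{\sigma s_\alpha\beta}-z_{\om\beta}$, and the second summand as $z_{\sigma(\alpha)}\cdot d_\beta(\mu_\alpha)=\langle\beta,\alpha^\vee\rangle\,z_{\sigma(\alpha)}$; summing and using the reflection identity collapses this to $z_{\sigma(\beta)}-z_{\om(\beta)}$.

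The second case ($\sigma s_\alpha\preccurlyeq\om'$) is even easier: applying $d_\beta$ to $s_\alpha^\mu M(\om',\sigma s_\alpha)-h\,z_{\sigma(\alpha)}$, the subtracted term is killed because both $h$ and $z_{\sigma(\alpha)}$ are $s_\beta^\mu$-invariant, while the main term becomes $s_\alpha^\mu d_{\beta^\alpha}M(\om',\sigma s_\alpha)=z_{\sigma s_\alpha(\beta^\alpha)}-z_{\om'(\beta^\alpha)}=z_{\sigma(\beta)}-z_{\om(\beta)}$ after using $s_\alpha^2=\id$ and $\om's_\alpha=\om$. There is no serious obstacle; the main care is in bookkeeping the two distinct $s_\alpha$-actions---the Weyl reflection on root indices inside $z_\bullet$ and the operator $s_\alpha^\mu$ on $\mu$-variables---which commute with each other because they live on different coordinates. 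As a byproduct, the fact that the $d_\beta$-outputs of Cases~1 and~2 agree in the overlap region $\sigma\preccurlyeq\om'$ and $\sigma s_\alpha\preccurlyeq\om'$ is exactly the content of the well-definedness remark for $M(\om,\sigma)$ noted after its definition.
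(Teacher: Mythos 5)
Your proof is correct and follows the paper's own argument essentially verbatim: induct on $\ell(\om)$, apply $d_\beta$ to the appropriate clause of the recursion \eqref{def-qinduction}, commute $d_\beta$ past $s_\alpha^\mu$ via $d_\beta\circ s_\alpha^\mu=s_\alpha^\mu\circ d_{\beta^\alpha}$, and absorb the coefficient of $z_{\sigma(\alpha)}$ using the reflection identity. The extra remark that agreement of the two clauses in the overlap region is exactly the well-definedness observation is a nice observation the paper makes separately after the proof.
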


\begin{proof} Obviously, the statement holds of $\om=\id$. 

Consider the first case of the inductive definition. We have
\begin{align*}
d_\beta(M(\om,\sigma)) & =d_\beta(s^\mu_\alpha(M_{\om',\sigma})+\mu_\alpha\, z_{\sigma(\alpha)})\\
&= s^\mu_\alpha d_{\beta^\alpha}(M(\om',\sigma))+\langle \alpha,\beta^\vme\rangle z_{\sigma(\alpha)}\\
& = s^\mu_\alpha(z_{\sigma(\beta^\alpha)}-z_{\om'(\beta^\alpha)})+\langle \alpha,\beta^\vme\rangle z_{\sigma(\alpha)}\\
& =z_{\sigma s_\alpha(\beta)}-z_{\om(\beta)}+\langle\alpha, \beta^\vme\rangle z_{\sigma(\alpha)}.
\end{align*}
Since 
\[
s_\alpha(\beta)=\beta-\langle \alpha,\beta^\vme\rangle \alpha, \ \ 
s_\alpha(\beta)+\langle \alpha,\beta^\vme\rangle \alpha=\beta,\ \ 
z_{\sigma s_\alpha(\beta)}+\langle \alpha,\beta^\vme\rangle z_{\sigma(\alpha)}=z_{\sigma(\beta)}
\]
the conclusion follows.
\medskip

Consider the second case of the inductive definition. We have
\begin{align*}
d_\beta(M(\om,\sigma))& =d_\beta(s^\mu_\alpha(M_{\om',\sigma s_\alpha})-h\, z_{\sigma(\alpha)})\\
&=s^\mu_\alpha d_{\beta^\alpha}(M(\om',\sigma s_\alpha))\\
&=s^\mu_\alpha(z_{\sigma s_\alpha(\beta^\alpha)}-z_{\om'(\beta^\alpha)})\\
& =s^\mu_\alpha(z_{\sigma (\beta)}-z_{\om(\beta)})\\
& =z_{\sigma (\beta)}-z_{\om(\beta)}.
\end{align*} 
\end{proof}

Note that both cases of the inductive definition \eqref{def-qinduction} are linear in $\mu_\alpha$ variables.
If both cases are applicable in the above proof, then we get the same result of the differential $d_\beta$ for any root~$\beta$:
$$d_\beta(s^\mu_\alpha(M(\om',\sigma))+\mu_\alpha\, z_{\sigma(\alpha)})=
d_\beta(
s^\mu_\alpha(M(\om',\sigma s_\alpha))-h\,z_{\sigma(\alpha)})\,.$$
It follows that the quadratic forms are equal:
$$s^\mu_\alpha(M(\om',\sigma))+\mu_\alpha\, z_{\sigma(\alpha)}=
s^\mu_\alpha(M(\om',\sigma s_\alpha))-h\,z_{\sigma(\alpha)}\,.$$ 
Therefore the two cases of the formula \eqref{def-qinduction} do not create a contradiction.

\begin{theorem}\label{simple_induction}
 Let $\alpha$ be a simple root. Suppose $\sigma\preccurlyeq\om\preccurlyeq\om s_\alpha$.
Then
$$M(\om s_\alpha,\sigma)=M(\om,\sigma)+\mu_\alpha z_{\om(\alpha)}.$$
\end{theorem}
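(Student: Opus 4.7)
The plan is to reduce the identity to a direct application of Proposition \ref{prop88} via the inductive definition \eqref{def-qinduction}. First I would unpack the hypothesis: the assumption $\om \preccurlyeq \om s_\alpha$ means $\ell(\om s_\alpha) = \ell(\om) + 1$, so when computing $M(\om s_\alpha,\sigma)$ from \eqref{def-qinduction} I take $\om$ to play the role of $\om'$ and $s_\alpha$ to be the appended simple reflection. Since $\sigma \preccurlyeq \om$ is given, the \emph{first} case of \eqref{def-qinduction} applies and yields
$$M(\om s_\alpha,\sigma) = s_\alpha^\mu(M(\om,\sigma)) + \mu_\alpha\, z_{\sigma(\alpha)}.$$

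Second, the claimed identity therefore becomes
$$s_\alpha^\mu(M(\om,\sigma)) + \mu_\alpha\, z_{\sigma(\alpha)} = M(\om,\sigma) + \mu_\alpha\, z_{\om(\alpha)},$$
equivalently
$$M(\om,\sigma) - s_\alpha^\mu(M(\om,\sigma)) = \mu_\alpha\bigl(z_{\sigma(\alpha)} - z_{\om(\alpha)}\bigr).$$
By the very definition of the divided difference operator, $f - s_\alpha^\mu(f) = \mu_\alpha \cdot d_\alpha(f)$ whenever the right-hand side is a polynomial. A quick induction on $\ell(\om)$ using \eqref{def-qinduction} shows that $M(\om,\sigma)$ is at most linear in the $\mu$-variables (both terms $\mu_\alpha z_{\sigma(\alpha)}$ and $-h z_{\sigma(\alpha)}$ added in the recursion are linear in $\mu$, and the symmetry $s_\alpha^\mu$ preserves degree), so $d_\alpha(M(\om,\sigma))$ is a legitimate polynomial. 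Applying Proposition \ref{prop88} with $\beta = \alpha$ then gives $d_\alpha(M(\om,\sigma)) = z_{\sigma(\alpha)} - z_{\om(\alpha)}$, and multiplying by $\mu_\alpha$ produces exactly the required identity.

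I do not expect a genuine obstacle: the hypothesis $\sigma \preccurlyeq \om$ is precisely what is needed to avoid the second case of \eqref{def-qinduction}, and everything else follows mechanically from Proposition \ref{prop88}. The one point worth stating cleanly is the linearity-in-$\mu$ remark, since without it the identity $f - s_\alpha^\mu(f) = \mu_\alpha d_\alpha(f)$ would require an extra justification; this can be handled in a single sentence. Thus the theorem is essentially a corollary of Proposition \ref{prop88}, packaging the statement in the form most useful for the recursion on $\om$.
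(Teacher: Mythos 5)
Your proof is correct and follows essentially the same path as the paper's: apply case 1 of the inductive definition \eqref{def-qinduction} (valid since $\sigma\preccurlyeq\om$), rewrite $M(\om,\sigma)-s_\alpha^\mu(M(\om,\sigma))$ as $\mu_\alpha\,d_\alpha(M(\om,\sigma))$, and invoke Proposition \ref{prop88} with $\beta=\alpha$. The side remark on linearity in $\mu$ is harmless but unnecessary: Proposition \ref{prop88} directly identifies $d_\alpha(M(\om,\sigma))$ as the polynomial $z_{\sigma(\alpha)}-z_{\om(\alpha)}$, so no separate polynomiality argument is required.
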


\begin{proof}
From the  inductive definition of $M$ (case 1) and Proposition \ref{prop88} it follows: 
\begin{align*}
M(\om,\sigma)-M(\om s_\alpha,\sigma) &=M(\om,\sigma)-\big(s^\mu_\alpha(M(\om,\sigma))+\mu_\alpha z_{\sigma(\alpha)}\big)\\
&=\mu_\alpha d_\alpha (M(\om,\sigma))-\mu_\alpha z_{\sigma(\alpha)}\\
&=\mu_\alpha \big(z_{\sigma(\alpha)}-z_{\om(\alpha)} -z_{\sigma(\alpha)}\big)\\
&=-\mu_\alpha z_{\om(\alpha)}.
\end{align*}
\end{proof}
Theorem \ref{simple_induction} is an extension of the property (1.3') of the axiomatic characterization of the weight function to the case of general $G$.
Of course, this inductive step 
together with the diagonal data determines all transformation properties  $M(\om,\sigma)$:

\begin{proposition}\label{pr:comparing}Suppose two families of quadratic forms $M_1(\om,\sigma)$ and $M_2(\om,\sigma)$ are defined for $\sigma\preccurlyeq\om$ and satisfy the  formula of Theorem \ref{simple_induction}. Moreover suppose $M_1(\sigma,\sigma)=M_2(\sigma,\sigma)$ for all $\sigma\in W$, then $M_1(\om,\sigma)=M_2(\om,\sigma)$ for all $\sigma\preccurlyeq\om$.
 \end{proposition}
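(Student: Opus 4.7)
My plan is to pass to the difference $N(\omega,\sigma) := M_1(\omega,\sigma) - M_2(\omega,\sigma)$ and reduce the problem to a connectivity statement in the Weyl group. By hypothesis $N$ is defined on $\{(\omega,\sigma) : \sigma\preccurlyeq\omega\}$ and satisfies
\[
N(\sigma,\sigma)=0 \text{ for all } \sigma\in W, \qquad N(\omega s_\alpha,\sigma)=N(\omega,\sigma) \text{ whenever } \sigma\preccurlyeq\omega\preccurlyeq\omega s_\alpha \text{ with } \alpha \text{ simple}.
\]
Thus, for each fixed $\sigma$, the function $\omega\mapsto N(\omega,\sigma)$ is constant on every connected component of the graph $G_\sigma$ on the Bruhat up-set $\{\omega\in W : \omega\succcurlyeq\sigma\}$ whose edges are pairs $\{\omega,\omega s_\alpha\}$ with $\alpha$ simple and both endpoints in the up-set. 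It therefore suffices to show that every $\omega\succcurlyeq\sigma$ lies in the same component of $G_\sigma$ as $\sigma$ itself.

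To construct the required path I would route it through the longest element $\omega_0\in W$. Since $\omega_0$ has maximal length, every simple reflection is a right descent of $\omega_0$, whence $\ell(\omega_0\tau)=\ell(\omega_0)-\ell(\tau)$ for every $\tau\in W$; combined with $\omega_0^{-1}=\omega_0$ (the longest element of any finite Coxeter group is an involution) this also gives $\ell(\tau^{-1}\omega_0)=\ell(\omega_0)-\ell(\tau)$. Consequently $\omega_0=\tau\cdot(\tau^{-1}\omega_0)$ is a reduced factorization, and concatenating any reduced expression for $\tau$ with any reduced expression for $\tau^{-1}\omega_0$ produces a reduced expression for $\omega_0$. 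Reading its letters one at a time yields a Cayley-graph path $\tau\to\tau s_{i_1}\to\cdots\to\omega_0$ of strictly increasing length whose partial products all contain $\tau$ as a prefix subword, and hence all satisfy ${}\succcurlyeq\tau$ by the subword characterization of Bruhat order.

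Applying this construction with $\tau=\omega$ gives a path in $G_\sigma$ from $\omega$ upward to $\omega_0$ (all intermediate vertices are $\succcurlyeq\omega\succcurlyeq\sigma$), and applying it with $\tau=\sigma$ and reading the path in reverse gives one in $G_\sigma$ from $\omega_0$ downward to $\sigma$ (all intermediate vertices are $\succcurlyeq\sigma$). Splicing them connects $\omega$ to $\sigma$ inside $G_\sigma$, and the edge-invariance of $N$ then forces $N(\omega,\sigma)=N(\sigma,\sigma)=0$, which is the desired equality $M_1(\omega,\sigma)=M_2(\omega,\sigma)$.

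The step I expect to require the most care is the connectivity claim for $G_\sigma$. A naive downward induction---looking for a simple right descent $s_\alpha$ of $\omega$ with $\omega s_\alpha\succcurlyeq\sigma$---can genuinely fail: for $G=\GL_3$, $\omega=s_1s_2$ and $\sigma=s_2$, the only right descent of $\omega$ is $s_2$ and $\omega s_2=s_1\not\succcurlyeq\sigma$. The detour through $\omega_0$ circumvents the obstruction, and it essentially uses the finiteness of $W$ through the length-complementarity property $\ell(\tau)+\ell(\tau^{-1}\omega_0)=\ell(\omega_0)$.
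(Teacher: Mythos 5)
Your proof is correct and takes essentially the same route as the paper: both pass through the longest element $\om_0$, first increasing length and then decreasing it, with the second coordinate held fixed. The only difference is that you make explicit the length-additivity $\ell(\tau)+\ell(\tau^{-1}\om_0)=\ell(\om_0)$ and the prefix-subword argument that keep intermediate vertices in the Bruhat up-set of $\sigma$ (and you flag the genuine pitfall that a naive downward descent can leave that up-set), whereas the paper's terse proof leaves these checks implicit.
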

\begin{proof}We show equality of forms inductively, keeping the second variable of $M_i(\om,\sigma)$ fixed. We can only change $\om$ by an elementary reflection $s_i$. The starting point for the induction is $M_1(\sigma,\sigma)=M_2(\sigma,\sigma)$. Increasing the length of $\om$ by 1 we can arrive to $M_1(\om_0,\sigma)=M_2(\om_0,\sigma)$. Now decreasing the length $\om$ for we can go down to any $\om$ satisfying $\sigma\prec\om\prec\om_0$.
\end{proof}

\section{Weight function vs lexicographically smallest reduced word} \label{eg:FL3}

Let us revisit Example \ref{GL3recu}, and study the underlying geometry and its relation with the weight functions.

The class $E_{123}(X_{321})$ is calculated in Example \ref{GL3recu} in two ways, one corresponding to the reduced word $s_1s_2s_1$, the other corresponding to the reduced word $s_2s_1s_2$ of the permutation $321$. The two obtained expressions are, respectively,
\begin{equation}\label{s1s2s1}
E_{\id}(X_{s_1s_2s_1})=
\delta\left(\frac{z_2}{z_1},\frac{\mu_3}{\mu_2}\right)    
\delta\left(\frac{z_3}{z_2},\frac{\mu_3}{\mu_1}\right)
\delta\left(\frac{z_2}{z_1},\frac{\mu_2}{\mu_1}\right) 
+
\delta\left(\frac{z_1}{z_2},h\right)
\delta\left(\frac{z_3}{z_1},\frac{\mu_3}{\mu_1}\right)
\delta\left(\frac{z_2}{z_1},h\right),
\end{equation}
\begin{equation}\label{s2s1s2}
E_{\id}(X_{s_2s_1s_2})=
\delta\left(\frac{z_3}{z_2},\frac{\mu_2}{\mu_1}\right)
\delta\left(\frac{z_2}{z_1},\frac{\mu_3}{\mu_1}\right)
\delta\left(\frac{z_3}{z_2},\frac{\mu_3}{\mu_2}\right) 
+
\delta\left(\frac{z_2}{z_3},h\right)
\delta\left(\frac{z_3}{z_1},\frac{\mu_3}{\mu_1}\right)
\delta\left(\frac{z_3}{z_2},h\right). 
\end{equation}
As we mentioned, the equality of these two expressions follows from general theory of Borisov and Libgober, or can be shown to be equivalent to the {\em four term identity} \cite[eq.~(2.7)]{RTV}.
\medskip

To explore the underlying geometry, let $u_{21}$, $u_{31}$ and $u_{32}$ be the coordinates in the standard affine neighborhood of the fixed point $x_{123}$, that is, $u_{ij}$ are the entries of the lower-triangular $3\times 3$ matrices. The character of the coordinate $u_{ij}$ is equal to $z_i/z_j$. The open cell $X^\circ_{s_1s_2s_1}$ intersected with this affine neighborhood is the complement of the sum of divisors
$$
\{u_{31}-u_{21}u_{s_2}= 0\}\quad \text{and}\quad 
\{ u_{31}= 0\}\,.$$
The intersections of the divisors is singular. The resolution corresponding to the word $s_1s_2s_1$ coincides with the blow-up of the $u_{21}$ axis. The fiber above $0$ contains two fixed points which give the two contributions in \eqref{s1s2s1}. Analogously, the expression obtained by blowing up the axis $u_{23}$ is \eqref{s2s1s2}.

\smallskip

The reader is invited to verify that the (long) calculation for   
\[
\frac{\ww_{321,123}}{\eell(T_{\id}\!\Fl(3))}=
\frac{\ww_{321,123}}{\vt({z_2}/{z_1})\vt({z_3}/{z_1})\vt({z_3}/{z_2})}
\]
results exactly the expression \eqref{s1s2s1}. This and many other examples calculated by us suggest the following conjecture: the weight function formula for $\ww_{\om,\sigma}/{\eell(T_{\sigma}\!\Fl(n))}$ coincides (without using any $\vt$-function identities) with the expression obtained for $E_\sigma(X_\om)$ using the {\em lexicographically smallest} reduced word  for $\om$.

\section{Action of $C$-operations on weight functions.}
We still consider the case of $G=\GL_n$.
Let $\hC_k$ be a family of operators on the space of meromorphic functions no $\t\times\t^*\times\C$ indexed by the simple roots:
\begin{equation}\label{eq:c-hat}\hC_k(f)(z,\gamma,\lambda,h)=\delta(L^\gamma_k,\nu_k)\cdot f(z,\gamma,s_k(\lambda),h)\,+\,\delta(L^\gamma_k,h)\cdot f(s_k(z),\gamma,s_k(\lambda),h)\,.\end{equation}
Here $$L_k^\gamma=\frac{\gamma_{k+1}}{\gamma_k}$$ denotes the character of the relative tangent bundle $G/B\to G/P_k$ (equal to $\eee^{-\alpha_k}$ at the point $x_{\id}$), but living in the $\gamma$-copy of variables.
We also recall that $$\nu_k=\frac{\mu_{k+1}}{\mu_k}=h^{\langle-,\alpha^\vme_k\rangle}\,.$$
The operators are constructed in such a way that they descend to the operators $C_k$ acting on the K theory of $\F(n)$. One may think about them as  acting on $Frac(K_{\T\times\T}(\Hom(\C^n,\C^n))\otimes \RR(\T^*\times \C^*))[[q]]$.

\begin{theorem}The operators $\hC_k$ satisfy \begin{itemize}
\item braid relations,
\item $\hC_k^2=\kappa(\alpha_k)$.
\end{itemize}
\end{theorem}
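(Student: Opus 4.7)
My plan is to transfer both assertions from the downstairs Hecke operators $C_k$ (acting on the tuples of fixed-point restrictions indexed by $W$) to the upstairs operators $\hat C_k$ via the GKM-type restriction map. The guiding observation, already stressed in the prose preceding the theorem, is that $\hat C_k$ is manufactured precisely so as to descend to $C_k$ on $K_\T(\F(n))$, and descent reduces operator identities to their downstairs counterparts by injectivity of the restriction map.

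The first step is to verify that $\hat C_k$ preserves the ideal of GKM relations that realizes $K_\T(\F(n))$ as a quotient of $K_{\T\times\T}(\End(\C^n))$, namely the ideal of $f$ for which $f|_{\gamma=z_\sigma}=0$ for every $\sigma\in S_n$. The first summand $\delta(L_k^\gamma,\nu_k)\cdot f(z,\gamma,s_k\lambda,h)$ inherits this vanishing directly. For the second summand $\delta(L_k^\gamma,h)\cdot f(s_kz,\gamma,s_k\lambda,h)$, set $z'=s_kz$; then $\gamma_i=z_{\sigma(i)}=z'_{(s_k\sigma)(i)}$ is again a GKM substitution in the $z'$-variables, on which $f$ vanishes. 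Hence $\hat C_k$ descends. A parallel calculation shows that the descended operator agrees with the Hecke operator $C_k$ of Theorem \ref{th:mainiduction}: the restriction of $L_k^\gamma=\gamma_{k+1}/\gamma_k$ at $\gamma=z_\sigma$ is the character $(L_k)_{x_\sigma}$ (matching $\delta_k^{bd}$ and $\delta_k^{int}$), and the rewriting above matches the substitution $f(s_kz,\gamma,\cdot)$ upstairs with the component permutation $f_\sigma\mapsto f_{\sigma s_k}$ downstairs, i.e.~with the action of $s_k^\gamma$, combined with $s_k^\mu$ on $\lambda$.

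With the compatibility $\hat C_k\leftrightarrow C_k$ established, the two assertions follow by pulling back the downstairs identities through the injective restriction map. The equality $C_k^2=\kappa_k(\lambda)\,\id$ is Theorem \ref{th:kappa}; because $\kappa(\alpha_k)=\kappa_k(\lambda)$ depends only on $\lambda$ and $h$, the scalar multiplier is common to all fixed points and lifts cleanly as a scalar upstairs, giving $\hat C_k^2=\kappa(\alpha_k)$. The braid relations for $C_k$ were recorded at the end of Section \ref{sec:HeckeEll} as a consequence of the intrinsic, resolution-independent nature of the classes $\Ek(X_\omega)$ (so that the composition $C_{i_1}\cdots C_{i_\ell}$ applied to the unit class is independent of the chosen reduced word representing $\omega$); they transfer to $\hat C_k$ by the same descent.

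The main obstacle is the bookkeeping in the second step, reconciling the upstairs action $s_k^z$ on the equivariant variables with the downstairs action $s_k^\gamma$ permuting tuple components: this forces the identification $f(s_kz,z_\sigma,\cdot)=f(z',z'_{s_k\sigma},\cdot)_{z'=s_kz}$ to be handled carefully. As an alternative approach that bypasses this bookkeeping, one can prove the theorem by a direct theta-function calculation: expanding $\hat C_k^2(f)$ and collapsing the cross-terms via Fay's trisecant identity, exactly as in the proof of Theorem \ref{th:kappa}, while the braid relations reduce to the four-term theta identity of the type encountered in Section \ref{eg:FL3}.
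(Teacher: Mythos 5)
The paper dispatches this theorem in one line by noting that it is a repetition of the $\delta$-function calculation in the proof of Theorem \ref{th:kappa}: expand $\hat C_k^2$, collapse the cross term using $\delta(x,y)=-\delta(1/x,1/y)$, and identify the diagonal coefficient with $\kappa(\alpha_k)$ via Fay's trisecant identity \eqref{trisecant}; the braid relations reduce to the four-term identity seen in Section \ref{eg:FL3}. Your secondary, ``alternative'' route at the end of your proposal is exactly this, and it is the correct argument.

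Your primary route, however, has a gap. You propose to deduce the operator identity $\hat C_k^2=\kappa(\alpha_k)\,\id$ upstairs from $C_k^2=\kappa_k(\lambda)\,\id$ downstairs ``by pulling back through the injective restriction map.'' But the map that carries $\hat C_k$ to $C_k$ goes from the big ring (a localization of $K_{\T\times\T}(\End(\C^n))$, i.e.\ meromorphic functions in $(z,\gamma,\mu,h)$) onto $K_\T(\F(n))$, and this map is a \emph{surjection}, not an injection: its kernel is the GKM ideal. Hence even granting that $\hat C_k$ descends to $C_k$, the identity $C_k^2=\kappa_k(\lambda)\,\id$ only tells you that $\hat C_k^2-\kappa(\alpha_k)\,\id$ sends every upstairs function into that kernel, not that it is the zero operator. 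The injectivity you invoke is the localization $K_\T(\F(n))\hookrightarrow\bigoplus_{\sigma\in W}\RR(\T)$, but that map points in the wrong direction and cannot lift an identity from the quotient back to the polynomial ring. The same objection applies to the braid relations. One could try to close the gap by arguing separately that an operator of the special shape $A\cdot\id+B\cdot s_k^z$, with $A,B$ functions of $(\gamma,\mu,h)$ only, which annihilates the entire quotient $K_\T(\F(n))$ must have $A=B=0$; your proposal does not supply this extra step, and the paper does not take this route either. The direct theta-function computation is therefore not merely an alternative that ``bypasses bookkeeping'' --- it is needed, and it is what the paper does.
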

\begin{proof}This is straightforward checking and a repetition of the proof of Theorem \ref{th:kappa}.\end{proof}
Let us set ${\mathfrak w}_{id}=\wwh_{id}/\eell_\gamma$, where $\eell_\gamma$ is the elliptic Euler class written in $\gamma$ variables.
The operators $\hC_k$ recursively define the functions ${\mathfrak w}_\om$ living in the same space as the weight function $\wwh(\om)$. They have the restrictions to the fixed points of $\F(n)$ equal to the restrictions of  $\wwh_\om$ divided by the elliptic Euler class. Nevertheless one can check that they are essentially different from the weights function. The difference lies in the ideal defining K theory of $\F(n)$.
\begin{example}\rm Let $G=\GL_2$.
Setting 

$${\mathfrak w}_{id}=\frac{\wwh_{id}}{\eell_\gamma}=
\frac{\vt \left(\frac{z_1}{\gamma _1}\right) \vt
   \left(\frac{z_2}{\gamma _1}\right) \delta
   \left(\frac{z_1}{\gamma _1},\frac{h \mu _2}{\mu
   _1}\right)}{\vt \left(\frac{\gamma _2}{\gamma
   _1}\right)}=\frac{\vt \left(\frac{z_2}{\gamma _1}\right) \vt
   \left(\frac{h \mu _2 z_1}{\gamma _1 \mu
   _1}\right)}{\vt \left(\frac{\gamma _2}{\gamma
   _1}\right) \vt \left(\frac{h \mu _2}{\mu
   _1}\right)}$$
we obtain
$${\mathfrak w}_{s_1}=\frac{\vt \left(\frac{\gamma _2 \mu _2}{\gamma
   _1 \mu _1}\right) \vt \left(\frac{z_2}{\gamma
   _1}\right) \vt \left(\frac{h \mu _1 z_1}{\gamma _1
   \mu _2}\right)}{\vt \left(\frac{\gamma _2}{\gamma
   _1}\right){\!}^2\, \vt \left(\frac{\mu _2}{\mu
   _1}\right) \vt \left(\frac{h \mu _1}{\mu
   _2}\right)}-\frac{\vt \left(\frac{\gamma _2
   h}{\gamma _1}\right) \vt \left(\frac{z_2}{\gamma
   _2}\right) \vt \left(\frac{h \mu _1 z_1}{\gamma _2
   \mu _2}\right)}{\vt \left(\frac{\gamma _2}{\gamma
   _1}\right){\!}^2\, \vt (h)\, \vt \left(\frac{h \mu
   _1}{\mu _2}\right)}
$$

Note that the operations $\hC_i$ (as well as $C_i$) do not preserve the initial transformation form. The  summands of \eqref{eq:c-hat} might have different transformation properties. 
The equality holds in the quotient ring $K_T(\GL_2/B)$.	

\end{example}


\section{A tale of two recursions for weight functions}\label{sec:tale}

\subsection{Bott-Samelson recursion for weight functions}
The main achievement in Sections~\ref{sec:EllWeight}--\ref{eg:FL3} was the identification of the geometrically defined $\Ek(X_\om)$ classes with the weight functions whose origin is in representation theory. The way our identification went was through recursions. The elliptic classes satisfied the Bott-Samelson recursion of Theorem \ref{th:mainiduction}, and the weight functions satisfied the R-matrix recursion of \eqref{eq:Rmatrix}. In Proposition \ref{pro:ERrecursion} we showed that the two recursions are consistent, and hence both recursions hold for both objects.

One important consequence is that (the fixed point restrictions of) weight functions satisfy the Bott-Samelson recursion, as follows:
\begin{theorem} We have
\[
[\wwh_{\om s_k}] =
\begin{cases}
s_k^\mu [\wwh_{\om}] \cdot \delta( \frac{\gamma_{k+1}}{\gamma_k},\frac{\mu_{k+1}}{\mu_k})
-
s_k^\mu s_k^\gamma[\wwh_{\om}] \cdot \delta(\frac{\gamma_{k+1}}{\gamma_k},h)
& \text{if } \ell(\om s_k)>\ell(\om) \\
\\
s_k^\mu[\wwh_{\om}] \cdot 
\frac{\delta( \frac{\gamma_{k+1}}{\gamma_k},\frac{\mu_{k+1}}{\mu_k})}
{\delta(\frac{\mu_k}{\mu_{k+1}},h)\delta(\frac{\mu_{k+1}}{\mu_k},h)}
-
s_k^\mu s_k^\gamma[\wwh_{\om}] \cdot \frac{\delta(\frac{\gamma_{k+1}}{\gamma_k},h)}{\delta(\frac{\mu_k}{\mu_{k+1}},h)\delta(\frac{\mu_{k+1}}{\mu_k},h)}
& \text{if } \ell(\om s_k)<\ell(\om), 
\end{cases}
\]
or equivalently, using the normalization 
\begin{equation}\label{defmodd}
\wwh'_{\om}=\wwh_{\om} \cdot \frac{1}{\prod_{i<j,\om(i)>\om(j)} \delta(\frac{\mu_i}{\mu_j},h)}.
\end{equation}
we have the unified
\begin{equation}\label{uniBSW}
[\wwh'_{\om s_k}] =
s_k^\mu[\wwh'_{\om}] \cdot 
\frac{\delta( \frac{\gamma_{k+1}}{\gamma_k},\frac{\mu_{k+1}}{\mu_k})}
{\delta(\frac{\mu_k}{\mu_{k+1}},h)}
-
s_k^\mu s_k^\gamma[\wwh'_{\om}] \cdot \frac{\delta(\frac{\gamma_{k+1}}{\gamma_k},h)}{\delta(\frac{\mu_k}{\mu_{k+1}},h)}.
\end{equation} \qed
\end{theorem}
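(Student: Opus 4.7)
My plan is to derive the recursion as a direct translation of the Bott--Samelson recursion (Theorem~\ref{th:mainiduction}) across the identification with weight functions of Theorem~\ref{thm:EsameW}, using $C_k^2=\kappa_k$ (Theorem~\ref{th:kappa}) to obtain the length-decreasing case from the length-increasing one. Restriction of Theorem~\ref{thm:EsameW} at the fixed point $x_\sigma$ reads
\[ E_\sigma(X_\om,\lambda) = \wwh_{\om,\sigma}/\eell_\sigma, \qquad \eell_\sigma=\prod_{i<j}\vt(z_{\sigma(j)}/z_{\sigma(i)}). \]
Under this dictionary $L_k=\LL{\alpha_k}$ corresponds to $\gamma_{k+1}/\gamma_k$, the dynamical exponent $h^{\langle\alpha_k^\vme,\lambda\rangle}$ corresponds to $\nu_k=\mu_{k+1}/\mu_k$, and the tuple-permutation operator $s_k^\gamma\colon\{f_\sigma\}\mapsto\{f_{\sigma s_k}\}$ matches the substitution $\gamma_k\leftrightarrow\gamma_{k+1}$ on the global function $\wwh_\om$ (via $\gamma_a\mapsto z_{\sigma(a)}$).

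For $\ell(\om s_k)=\ell(\om)+1$, I would plug this dictionary into
\[ E_\bullet(X_{\om s_k},\lambda)=\delta(L_k,\nu_k)\,s_k^\mu E_\bullet(X_\om,\lambda)+\delta(L_k,h)\,s_k^\gamma s_k^\mu E_\bullet(X_\om,\lambda) \]
and multiply the $\sigma$-th component by $\eell_\sigma$. The resulting identity matches the claim provided the extra factor $\eell_\sigma/\eell_{\sigma s_k}$ in the second summand equals $-1$, which is responsible for the minus sign in the statement. This is a one-line check: among the factors of $\eell_\sigma$, only the $(k,k+1)$ one changes under $\sigma\leftrightarrow\sigma s_k$, flipping from $\vt(z_{\sigma(k+1)}/z_{\sigma(k)})$ to its negative $\vt(z_{\sigma(k)}/z_{\sigma(k+1)})$, while the pairs $(k,j),(k+1,j)$ (for $j>k+1$) and $(i,k),(i,k+1)$ (for $i<k$) merely permute their factors among themselves. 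The length-decreasing case $\ell(\om s_k)=\ell(\om)-1$ then follows by applying $C_k$ to the just-proved identity $C_k E_\bullet(X_{\om s_k})=E_\bullet(X_\om)$ and invoking $C_k^2=\kappa_k(\lambda)\cdot\id$: this yields $\kappa_k(\lambda)\,E_\bullet(X_{\om s_k})=C_k E_\bullet(X_\om)$, and expanding $C_k$ and dividing by $\kappa_k(\lambda)=\delta(\mu_k/\mu_{k+1},h)\,\delta(\mu_{k+1}/\mu_k,h)$ produces the second displayed formula.

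The unified formula~\eqref{uniBSW} requires comparing the renormalizing products $P_\om:=\prod_{i<j,\,\om(i)>\om(j)}\delta(\mu_i/\mu_j,h)$ before and after multiplying by $s_k$ on the right. The main technical step, and the only part requiring real bookkeeping, is the identity
\[ P_{\om s_k}=s_k^\mu P_\om\cdot \delta(\mu_k/\mu_{k+1},h) \quad \text{when } \ell(\om s_k)>\ell(\om), \]
together with the analogous relation $P_{\om s_k}=s_k^\mu P_\om/\delta(\mu_{k+1}/\mu_k,h)$ when $\ell(\om s_k)<\ell(\om)$. These reduce to a short inversion count: the inversion condition for $(k,j)$ in $\om s_k$ coincides with that for $(k+1,j)$ in $\om$ (both amount to $\om(k+1)>\om(j)$), so the $\delta$-contributions to $P_{\om s_k}$ and $s_k^\mu P_\om$ agree, and similarly for $(i,k),(i,k+1)$ with $i<k$; only the pair $(k,k+1)$ is newly created or destroyed, contributing the single factor $\delta(\mu_k/\mu_{k+1},h)$. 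Plugging these into the two non-unified formulas, and using the factorization $\kappa_k=\delta(\mu_k/\mu_{k+1},h)\,\delta(\mu_{k+1}/\mu_k,h)$, collapses both cases into the single unified expression~\eqref{uniBSW}.
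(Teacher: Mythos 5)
Your proposal is correct and follows the route the paper indicates in the paragraph preceding the theorem: translate the Bott--Samelson recursion of Theorem~\ref{th:mainiduction} through Theorem~\ref{thm:EsameW} (the sign coming from $\vt(x^{-1})=-\vt(x)$ in the Euler-class ratio $\eell_\sigma/\eell_{\sigma s_k}$, exactly as used for Formula~\ref{BS-intro}), then handle the length-decreasing case via $C_k^2=\kappa_k$ from Theorem~\ref{th:kappa}. Your inversion-count lemma for $P_{\om s_k}$ versus $s_k^\mu P_\om$ correctly supplies the bookkeeping needed for the unified form~\eqref{uniBSW}, which the paper leaves implicit.
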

This new property of weight functions plays an important role in a followup paper \cite{Sm2} in connection with elliptic stable envelopes. It is worth pointing out that the normalization \eqref{defmodd} also makes the R-matrix property of \eqref{eq:Rmatrix} unified:
\begin{equation}\label{uniRW}
\wwh'_{s_k \om} =
\wwh'_{\om} \cdot 
\frac{\delta( \frac{\mu_{\omega^{-1}(k+1)}}{\mu_{\omega^{-1}(k)}},\frac{z_{k+1}}{z_k})}
{\delta( \frac{\mu_{\omega^{-1}(k)}}{\mu_{\omega^{-1}(k+1)}},h)}
+
s_k^z{\wwh'_{\om}} \cdot \frac{\delta(\frac{z_k}{z_{k+1}},h)}
{\delta( \frac{\mu_{\omega^{-1}(k)}}{\mu_{\omega^{-1}(k+1)}},h)}.
\end{equation}
There is, however, an essential difference between \eqref{uniBSW} and \eqref{uniRW}: the latter holds for the weight functions themselves, while the former only holds for the cosets $[\wwh']$ of $\wwh'$ functions (i.e. after the restriction). Already for $n=2$, $\om=\id$, $k=1$ the two sides of \eqref{uniBSW} only hold for the fixed point restrictions, not for the $\wwh'$ functions themselves.

In essence, the remarkable geometric object $\Ek(X_\om)$ satisfies two different recursions: the Bott-Samelson recursion and the R-matrix recursion. The weight functions are the lifts of $\Ek(X_\om)$ classes satisfying only one of these recursions.

\subsection{Two recursions for the local elliptic classes}
Although we already stated and proved that both Bott-Samelson and R-matrix recursions hold for the elliptic classes, let us rephrase these statements in a convenient normalization.
\begin{itemize}
\item Let $\zeta_k\in \Hom(\T,\C^*)$ be the inverse of a root written multiplicatively, e.g. $\zeta_k=\frac{z_{k+1}}{z_k}$ for $G=\GL_n$   
\item Let $\nu_k\in \Hom(\C^*,\T)$ be the inverse of a coroot written multiplicatively, e.g. $\nu_k=\frac{\mu_{k+1}}{\mu_k}$ for $G=\GL_n$.
\end{itemize}
For $\nu\in \Hom(\C^*,\T)$ define 
\[
\kappa(\nu)=\vt\big(h,\nu\big)\vt\big(h,\nu^{-1}\big).
\]

The Bott-Samelson recursion for local elliptic classes is:
\begin{multline*}
\delta\left(\sigma^z(\zeta_k),\nu_k\right) 
\cdot s^\mu_kE_\sigma(X_\om) + 
\delta\left(\sigma^z(\zeta_k),h\right)\cdot
s^\mu_kE_{\sigma s_k}(X_\omega)=\\
=\begin{cases}E_\sigma(X_{\om.s_k})& \text{if } \ell(\om s_k)=\ell(\om)+1\\ \\
\kappa\left(\nu_k\right)
E_\sigma(X_{\om s_k})&\text{if }\ell(\om s_k)=\ell(\om)-1,\end{cases}
\end{multline*}
and the R-matrix recursion for local elliptic classes is
\begin{multline*}
\delta\left(\zeta_k,\bb\om^\mu(\nu_k)\right) 
\cdot E_\sigma(X_\om) + 
s_k^z\left(\delta\left(\zeta_k,h\right)\cdot
E_{s_k\sigma}(X_\omega)\right)=\\
=\begin{cases}E_\sigma(X_{s_k\om})& \text{if } \ell(s_k\om)=\ell(\om)+1\\ \\
\kappa\left(\bb\om^\mu(\nu_k)\right)
E_\sigma(X_{s_k\om})&\text{if }\ell(s_k\om)=\ell(\om)-1.\end{cases}
\end{multline*}

\noindent The $G=\GL_n$ special case of the above two formulas is 
\begin{multline*}
\delta\left(\frac{z_{\sigma(k + 1)}}{z_{\sigma(k)}},\frac{ \mu_{(k + 1)}}{\mu_{(k)}}\right) 
\cdot s^\mu_kE_\sigma(X_\om) + 
\delta\left(\frac{z_{\sigma(k+1)} }{z_{\sigma(k)}},h\right)\cdot
s^\mu_kE_{\sigma s_k}(X_\omega)=\\
=\begin{cases}E_\sigma(X_{\om.s_k})& \text{if } \ell(\om s_k)=\ell(\om)+1\\ \\
\delta\left(h,\frac{ \mu_{(k + 1)}}{\mu_{(k)}}\right)
\delta\left(h,\frac{ \mu_{(k )}}{\mu_{(k+1)}}\right)
E_\sigma(X_{\om s_k})&\text{if }\ell(\om s_k)=\ell(\om)-1,
\end{cases}\end{multline*}
\begin{multline*}
\delta\left(\frac{z_{k + 1}}{z_k},\frac{ \mu_{\om^{-1}(k + 1)}}{\mu_{\om^{-1}(k)}}\right) 
\cdot E_\sigma(X_\om) + 
\delta\left(\frac{z_k }{z_{k+1}},h\right)\cdot
s_k^zE_{s_k\sigma}(X_\omega)=\\
=\begin{cases}E_\sigma(X_{s_k\om})& \text{if } \ell(s_k\om)=\ell(\om)+1\\ \\
\delta\left(h,\frac{ \mu_{\om^{-1}(k + 1)}}{\mu_{\omega^{-1}(k)}}\right)
\delta\left(h,\frac{ \mu_{\om^{-1}(k )}}{\mu_{\om^{-1}(k+1)}}\right)
E_\sigma(X_{s_k\om})&\text{if }\ell(s_k\om)=\ell(\om)-1.
\end{cases}\end{multline*}

\section{Tables}

\subsection{$G=\GL_3$}
Below we give the full table of localized elliptic classes $E_\sigma(X_\om)$:

$$
\def\arraystretch{2}
\begin{array}{|c|c|c|c|c|c|}\hline
_\sigma\!\!\!{\text{\large$\diagdown$}}\!\!^\om   & ~~\id~~ & s_1 & s_2 & s_1s_2 & s_2s_1 \\
\hline\hline
 \id & 1 & \delt \frac{z_2}{z_1},\frac{\mu _2}{\mu _1}\big) & \delt \frac{z_3}{z_2},\frac{\mu _3}{\mu _2}\big) &
   \delt \frac{z_2}{z_1},\frac{\mu _3}{\mu _1}\big) \delt \frac{z_3}{z_2},\frac{\mu _3}{\mu _2}\big) & \delt
   \frac{z_2}{z_1},\frac{\mu _2}{\mu _1}\big) \delt \frac{z_3}{z_2},\frac{\mu _3}{\mu _1}\big) \\
\hline
 s_1 & 0 & \delt \frac{z_1}{z_2},h\big) & 0 & \delt \frac{z_1}{z_2},h\big) \delt
   \frac{z_3}{z_1},\frac{\mu _3}{\mu _2}\big) & \delt \frac{z_1}{z_2},h\big) \delt
   \frac{z_3}{z_2},\frac{\mu _3}{\mu _1}\big) \\
\hline
 s_2 & 0 & 0 & \delt \frac{z_2}{z_3},h\big) & \delt \frac{z_2}{z_3},h\big) \delt
   \frac{z_2}{z_1},\frac{\mu _3}{\mu _1}\big) & \delt \frac{z_2}{z_3},h\big) \delt
   \frac{z_3}{z_1},\frac{\mu _2}{\mu _1}\big) \\
\hline
 s_1s_2 & 0 & 0 & 0 & \delt \frac{z_1}{z_2},h\big) \delt \frac{z_1}{z_3},h\big) & 0 \\\hline
 s_2s_1 & 0 & 0 & 0 & 0 & \delt \frac{z_1}{z_3},h\big) \delt \frac{z_2}{z_3},h\big) \\\hline
 s_1s_2s_1 & 0 & 0 & 0 & 0 & 0 \\\hline
\end{array}
$$

$$\def\arraystretch{2}  
\begin{array}{|c|c|}
\hline
_\sigma\!\!\!{\text{\large$\diagdown$}}\!\!^\om& s_1s_2s_1=s_2s_1s_2
\\
\hline
\hline
 \id & \delt \frac{z_1}{z_2},h\big) \delt \frac{z_2}{z_1},h\big) \delt \frac{z_3}{z_1},\frac{\mu _3}{\mu
   _1}\big)+\delt \frac{z_2}{z_1},\frac{\mu _2}{\mu _1}\big) \delt \frac{z_2}{z_1},\frac{\mu _3}{\mu
   _2}\big) \delt \frac{z_3}{z_2},\frac{\mu _3}{\mu _1}\big) \\
\hline
 s_1 & \delt \frac{z_1}{z_2},h\big) \delt \frac{z_1}{z_2},\frac{\mu _2}{\mu _1}\big) \delt
   \frac{z_3}{z_1},\frac{\mu _3}{\mu _1}\big)+\delt \frac{z_1}{z_2},h\big) \delt
   \frac{z_2}{z_1},\frac{\mu _3}{\mu _2}\big) \delt \frac{z_3}{z_2},\frac{\mu _3}{\mu _1}\big) \\
\hline
 s_2 & \delt \frac{z_2}{z_1},\frac{\mu _3}{\mu _2}\big) \delt \frac{z_2}{z_3},h\big) \delt
   \frac{z_3}{z_1},\frac{\mu _2}{\mu _1}\big) \\
\hline
 s_1s_2 & \delt \frac{z_1}{z_2},h\big) \delt \frac{z_1}{z_3},h\big) \delt \frac{z_3}{z_2},\frac{\mu
   _2}{\mu _1}\big) \\
\hline
 s_2s_1 & \delt \frac{z_2}{z_1},\frac{\mu _3}{\mu _2}\big) \delt \frac{z_1}{z_3},h\big) \delt
   \frac{z_2}{z_3},h\big) \\
\hline
 s_1s_2s_1 & \delt \frac{z_1}{z_2},h\big) \delt \frac{z_1}{z_3},h\big) \delt \frac{z_2}{z_3},h\big)\\
\hline   
\end{array}
$$

\subsection{$G={\rm Sp}_2$} The Weil group is generated by two reflections:
$$\begin{array}{llllll}
s_1&\text{reflection in }& \alpha_1=(1,-1),&\text{the dual root }&\alpha_1^\vme=(1,-1),\\
s_2&\text{reflection in }& \alpha_2=(0,2), &\text{the dual root }&\alpha_2^\vme=(0,1).
\end{array}
$$
We have $s_1s_2s_1s_2=s_2s_1s_2s_1$. The full table of localized elliptic classes is given below:

$$\def\arraystretch{2}\begin{array}{|c|c|c|c|c|c|}\hline
_\sigma\!\!\!{\text{\large$\diagdown$}}\!\!^\om&~~\id~~&s_1&s_2&s_1s_2&s_2s_1\\ \hline\hline
\text{id}&1&\delt\frac{z_2}{z_1},\frac{\mu_2}{\mu_1}\big)&\delt\frac{1}{z_2^2},\frac{1}{\mu_2}\big)&\delt\frac{1}{z_2^2},\frac{1}{\mu_2}\big)\delt\frac{z_2}{z_1},\frac{1}{\mu_1\mu_2}\big)&\delt\frac{1}{z_2^2},\frac{1}{\mu_1}\big)\delt\frac{z_2}{z_1},\frac{\mu_2}{\mu_1}\big)\\ \hline
s_1&0&\delt\frac{z_1}{z_2},h\big)&0&\delt\frac{z_1}{z_2},h\big)\delt\frac{1}{z_1^2},\frac{1}{\mu_2}\big)&\delt\frac{z_1}{z_2},h\big)\delt\frac{1}{z_2^2},\frac{1}{\mu_1}\big)\\ \hline
s_2&0&0&\delt z_2^2,h\big)&\delt z_2^2,h\big)\delt\frac{z_2}{z_1},\frac{1}{\mu_1\mu_2}\big)&\delt z_2^2,h\big)\delt\frac{1}{z_1z_2},\frac{\mu_2}{\mu_1}\big)\\ \hline
s_1s_2&0&0&0&\delt z_1^2,h\big)\delt\frac{z_1}{z_2},h\big)&0\\ \hline
s_2s_1&0&0&0&0&\delt z_1z_2,h\big)\delt z_2^2,h\big)\\ \hline
s_1s_2s_1&0&0&0&0&0\\ \hline
s_2s_1s_2&0&0&0&0&0\\ \hline
s_1s_2s_1s_2&0&0&0&0&0\\ \hline
\end{array} $$

{
\tiny 
$$\arraycolsep=1.4pt\def\arraystretch{2}\begin{array}{|c|c|c|}\hline
_\sigma\!\!\!{\text{\large$\diagdown$}}\!\!^\om&s_1s_2s_1&s_2s_1s_2\\ \hline \hline
\text{id}&\delt\frac{z_1}{z_2},h\big)\delt\frac{z_2}{z_1},h\big)\delt\frac{1}{z_1^2},\frac{1}{\mu_1}\big)+\delt\frac{1}{z_2^2},\frac{1}{\mu_1}\big)\delt\frac{z_2}{z_1},\frac{1}{\mu_1\mu_2}\big)\delt\frac{z_2}{z_1},\frac{\mu_2}{\mu_1}\big)
&\delt\frac{1}{z_2^2},h\big)\delt z_2^2,h\big)\delt\frac{1}{z_1z_2},\frac{1}{\mu_1\mu_2}\big)+\delt\frac{1}{z_2^2},\frac{1}{\mu_1}\big)\delt\frac{1}{z_2^2},\frac{1}{\mu_2}\big)\delt\frac{z_2}{z_1},\frac{1}{\mu_1\mu_2}\big)
\\ \hline
s_1&\delt\frac{z_1}{z_2},h\big)\delt\frac{1}{z_1^2},\frac{1}{\mu_1}\big)\delt\frac{z_1}{z_2},\frac{\mu_2}{\mu_1}\big)+\delt\frac{z_1}{z_2},h\big)\delt\frac{1}{z_2^2},\frac{1}{\mu_1}\big)\delt\frac{z_2}{z_1},\frac{1}{\mu_1\mu_2}\big)
&\delt\frac{z_1}{z_2},h\big)\delt\frac{1}{z_1^2},\frac{1}{\mu_2}\big)\delt\frac{1}{z_2^2},\frac{1}{\mu_1}\big)
\\ \hline
s_2&\delt z_2^2,h\big)\delt\frac{1}{z_1z_2},\frac{\mu_2}{\mu_1}\big)\delt\frac{z_2}{z_1},\frac{1}{\mu_1\mu_2}\big)
&\delt z_2^2,h\big)\delt\frac{1}{z_2^2},\frac{1}{\mu_1}\big)\delt\frac{z_2}{z_1},\frac{1}{\mu_1\mu_2}\big)+\delt z_2^2,h\big)\delt\frac{1}{z_1z_2},\frac{1}{\mu_1\mu_2}\big)\delt z_2^2,\frac{1}{\mu_2}\big)
\\ \hline
s_1s_2&\delt z_1^2,h\big)\delt\frac{z_1}{z_2},h\big)\delt\frac{1}{z_1z_2},\frac{\mu_2}{\mu_1}\big)
&\delt z_1^2,h\big)\delt\frac{z_1}{z_2},h\big)\delt\frac{1}{z_2^2},\frac{1}{\mu_1}\big)
\\ \hline
s_2s_1&\delt z_1z_2,h\big)\delt z_2^2,h\big)\delt\frac{z_2}{z_1},\frac{1}{\mu_1\mu_2}\big)
&\delt z_1z_2,h\big)\delt z_2^2,h\big)\delt\frac{1}{z_1^2},\frac{1}{\mu_2}\big)
\\ \hline
s_1s_2s_1&\delt z_1^2,h\big)\delt\frac{z_1}{z_2},h\big)\delt z_1z_2,h\big)
&0
\\ \hline
s_2s_1s_2&0
&\delt z_1^2,h\big)\delt z_1z_2,h\big)\delt z_2^2,h\big)
\\ \hline
s_1s_2s_1s_2&0&0\\ \hline
\end{array} $$

$$\arraycolsep=1.4pt\def\arraystretch{2}\begin{array}{|c|c|}\hline 
_\sigma\!\!\!{\text{\large$\diagdown$}}\!\!^\om&s_1s_2s_1s_2=s_2s_1s_2s_1\\ \hline
\text{id}&\delt\frac{1}{z_1^2},\frac{1}{\mu_1}\big)\delt\frac{1}{z_2^2},\frac{1}{\mu_2}\big)\delt\frac{z_1}{z_2},h\big)\delt\frac{z_2}{z_1},h\big)+\delt\frac{1}{z_2^2},\frac{1}{\mu_1}\big)\delt\frac{1}{z_2^2},\frac{1}{\mu_2}\big)\delt\frac{z_2}{z_1},\frac{1}{\mu_1\mu_2}\big)\delt\frac{z_2}{z_1},\frac{\mu_2}{\mu_1}\big)+\delt\frac{1}{z_2^2},h\big)\delt\frac{1}{z_1z_2},\frac{1}{\mu_1\mu_2}\big)\delt\frac{z_2}{z_1},\frac{\mu_2}{\mu_1}\big)\delt z_2^2,h\big)\\ \hline
s_1&\delt\frac{1}{z_1^2},h\big)\delt z_1^2,h\big)\delt\frac{1}{z_1z_2},\frac{1}{\mu_1\mu_2}\big)\delt\frac{z_1}{z_2},h\big)+\delt\frac{1}{z_1^2},\frac{1}{\mu_1}\big)\delt\frac{1}{z_1^2},\frac{1}{\mu_2}\big)\delt\frac{z_1}{z_2},\frac{1}{\mu_1\mu_2}\big)\delt\frac{z_1}{z_2},h\big)+\delt\frac{1}{z_1^2},\frac{1}{\mu_2}\big)\delt\frac{1}{z_2^2},\frac{1}{\mu_1}\big)\delt\frac{z_2}{z_1},\frac{\mu_2}{\mu_1}\big)\delt\frac{z_1}{z_2},h\big)\\ \hline
s_2&\delt\frac{1}{z_1^2},\frac{1}{\mu_1}\big)\delt\frac{z_1}{z_2},h\big)\delt\frac{z_2}{z_1},h\big)\delt z_2^2,h\big)+\delt\frac{1}{z_2^2},\frac{1}{\mu_1}\big)\delt\frac{z_2}{z_1},\frac{1}{\mu_1\mu_2}\big)\delt\frac{z_2}{z_1},\frac{\mu_2}{\mu_1}\big)\delt z_2^2,h\big)+\delt\frac{1}{z_1z_2},\frac{1}{\mu_1\mu_2}\big)\delt\frac{z_2}{z_1},\frac{\mu_2}{\mu_1}\big)\delt z_2^2,\frac{1}{\mu_2}\big)\delt z_2^2,h\big)\\ \hline
s_1s_2&\delt z_1^2,h\big)\delt z_1^2,\frac{1}{\mu_2}\big)\delt\frac{1}{z_1z_2},\frac{1}{\mu_1\mu_2}\big)\delt\frac{z_1}{z_2},h\big)+\delt\frac{1}{z_1^2},\frac{1}{\mu_1}\big)\delt z_1^2,h\big)\delt\frac{z_1}{z_2},\frac{1}{\mu_1\mu_2}\big)\delt\frac{z_1}{z_2},h\big)+\delt z_1^2,h\big)\delt\frac{1}{z_2^2},\frac{1}{\mu_1}\big)\delt\frac{z_2}{z_1},\frac{\mu_2}{\mu_1}\big)\delt\frac{z_1}{z_2},h\big)\\ \hline
s_2s_1&\delt\frac{1}{z_1^2},\frac{1}{\mu_2}\big)\delt\frac{z_2}{z_1},\frac{\mu_2}{\mu_1}\big)\delt z_1z_2,h\big)\delt z_2^2,h\big)\\ \hline
s_1s_2s_1&\delt z_1^2,h\big)\delt\frac{1}{z_2^2},\frac{1}{\mu_2}\big)\delt\frac{z_1}{z_2},h\big)\delt z_1z_2,h\big)\\ \hline
s_2s_1s_2&\delt z_1^2,h\big)\delt\frac{z_2}{z_1},\frac{\mu_2}{\mu_1}\big)\delt z_1z_2,h\big)\delt z_2^2,h\big)\\ \hline
s_1s_2s_1s_2&\delt z_1^2,h\big)\delt\frac{z_1}{z_2},h\big)\delt z_1z_2,h\big)\delt z_2^2,h\big)\\ \hline
\end{array} $$
}


\begin{thebibliography}{AMSS2030}


\bibitem[AgOk16]{AO}
M. Aganagic, A. Okounkov, {Elliptic stable envelopes}, Preprint 2016, {\sf arXiv:1604.00423}

\bibitem[AlMi16]{AlMi} P. Aluffi, L. C. Mihalcea.
 Chern-Schwartz-MacPherson classes for Schubert cells in flag manifolds. Compos. Math. 152 (2016), no. 12, 2603--2625

\bibitem[AMSS17]{AMSS} 
P. Aluffi, L. C. Mihalcea, J. Sch{\"u}rmann, Ch. Su.
Shadows of characteristic cycles, Verma modules, and positivity of Chern-Schwartz-MacPherson classes of Schubert cells, arXiv:1709.08697 

\bibitem[AMSS19]{AMSS2}
P. Aluffi, L. C. Mihalcea, J. Sch\"urmann, Ch. Su. 
Motivic Chern classes of Schubert cells, Hecke algebras, and applications to Casselman's problem, 
arXiv:1902.10101

\bibitem[BGG73]{BGG} 
I.~N. Bern{\v{s}}te{\u\i}n, I.~M. Gel'fand, and S.~I. Gel'fand.
Schubert cells, and the cohomology of the spaces {$G/P$}.
Uspehi Mat. Nauk, 28(3(171)):3--26, 1973

\bibitem[BL00]{BoLi0}
L. Borisov, A. Libgober.
Elliptic genera of toric varieties and applications to mirror symmetry.
Inv. Math. (2000) 140: 453

\bibitem[BL03]{BoLi1}
L. Borisov, A. Libgober.
Elliptic genera of singular varieties. 
Duke Math. J. 116 (2003), no.~2, 319--351

\bibitem[BL05]{BoLi2}
L. Borisov, A. Libgober.
Mc{K}ay correspondence for elliptic genera.
Ann. of Math. (2), 161(3):1521--1569, 2005

\bibitem[BSY10]{BSY}
J.-P. Brasselet, J. Sch{\"u}rmann, Sh. Yokura.
Hirzebruch classes and motivic {C}hern classes for singular spaces.
J. Topol. Anal., 2(1):1--55, 2010

\bibitem[BE90]{BE}
P. Bressler, S. Evens. 
The Schubert calculus, braid relations, and generalized cohomology.
Trans. Amer. Math. Soc. 317 (1990), no. 2, 799--811


\bibitem[Br05]{BrionFlag} 
M. Brion.
Lectures on the geometry of flag varieties. Topics in cohomological studies of algebraic varieties, 33--85,
Trends Math., Birkh\"auser, Basel, 2005

\bibitem[BrKu05]{BrKu} 
M. Brion, S. Kumar, 
Frobenius splitting methods in geometry and representation theory.
Progress in Mathematics, 231. Birkh\"auser Boston, Inc., Boston, MA, 2005

\bibitem[Che58]{Che} 
C. Chevalley, 
Sur les d{\'e}compositions cellulaires des espaces $G/B$.
Algebraic groups and their generalizations: classical methods (University Park, PA, 1991)
Proc. Sympos. Pure Math. 56, 1--23
   
\bibitem[Cha85]{Cha}
K.~Chandrasekharan.
Elliptic functions, volume 281 of {\em Grundlehren der
  Mathematischen Wissenschaften}. 
Springer-Verlag, Berlin, 1985

\bibitem[ChGi97]{ChGi}
N.~Chriss and V.~Ginzburg.
Representation Theory and Complex Geometry.
Modern Birkh{\"a}user Classics. Birkh{\"a}user Boston, 1997, 2009

\bibitem[De74]{Dem} 
M.~Demazure,
D{\'e}singularisation des vari{\'e}t{\'e}s de Schubert g{\'e}n{\'e}ralis{\'e}es. 
Collection of articles dedicated to Henri Cartan on the occasion of his 70th birthday, I. Ann. Sci. {\'E}cole Norm. Sup. (4) 7 (1974), 53--88

\bibitem[DBWe16]{DBW} 
M. Donten-Bury, A. Weber. 
Equivariant Hirzebruch classes and Molien series of quotient singularities. 
Transform. Groups 23 (2018), no. 3, 671--705

\bibitem[F73]{Fay}
J. D. Fay: Theta functions on Riemann surfaces. 
Springer LNM 352, 1973

\bibitem[FR18]{FR}
L. Feh\'er, R. Rim\'anyi. 
Chern-Schwartz-MacPherson classes of degeneracy loci. 
Geometry and Topology 22 (2018) 3575--3622

\bibitem[FRW18]{FRW} 
L. M. Feh\'er, R. Rim\'anyi, A. Weber.
Motivic Chern classes and K-theoretic stable envelopes,
arXiv:1802.01503 
 
\bibitem[FRV07]{FRV}
G. Felder, R. Rim\'anyi, A. Varchenko. 
Poincar\'e-Birkhoff-Witt expansions of the canonical elliptic differential form, in ``Quantum Groups'' (eds. P. Etingof, S. Gelaki, S. Shnider), Contemp. Math. 433 (2007), 191--208

\bibitem[FRV18]{FRV18}
G. Felder, R. Rim\'anyi, A. Varchenko. 
Elliptic dynamical quantum groups and equivariant elliptic cohomology, SIGMA (Symmetry, Integrability and Geometry: Methods and Applications) 14 (2018), 132, 41 pages

\bibitem[FRW19]{FRW2} 
L. M. Feh\'er, R. Rim\'anyi, A. Weber.
Characteristic classes of orbit stratifications, the axiomatic approach. To appear in the proceedings of the Schubert Calculus conference in Guangzhou 2017, Springer. arXiv:1811.11467 

\bibitem[FF16]{FoFu}
A. Fomenko and D. Fuchs.
\newblock {\em Homotopical topology}. 
{GTM 273}, Springer, 2nd ed., 2016

\bibitem[Ga14]{Ganter}
N. Ganter.
\newblock The elliptic {W}eyl character formula.
\newblock {\em Compos. Math.}, 150(7):1196--1234, 2014

\bibitem[GaT-L17]{GL} 
S. Gautam, V. Toledano Laredo.
Elliptic quantum groups and their finite-dimensional representations. 
arXiv:1707.06469


\bibitem[GKV95]{GKV}
V. Ginzburg, M. Kapranov, and Eric Vasserot.
\newblock Elliptic algebras and equivariant elliptic cohomology, 1995.
\newblock arXiv:q-alg/9505012


\bibitem[GrKu08]{GrKu} 
W. Graham, S. Kumar.
On positivity in T-equivariant K-theory of flag varieties. 
Int. Math. Res. Not. IMRN 2008, Art. ID rnn 093, 43 pp. 

\bibitem[Gro94]{Grojnowski} 
I. Grojnowski. 
Delocalised equivariant elliptic cohomology. Preprint 1994, \newblock In {\em  Elliptic cohomology}, 114-121,
London Math. Soc. Lecture Note Ser., 342, Cambridge Univ. Press, Cambridge, 2007

\bibitem[Hi88]{Hirzebruch} 
F. Hirzebruch,
Elliptic genera of level N for complex manifolds. Differential geometrical methods in theoretical physics (Como, 1987), 37-63,
NATO Adv. Sci. Inst. Ser. C Math. Phys. Sci., 250, Kluwer Acad. Publ., Dordrecht, 1988

\bibitem[HBJ92]{HirzebruchBook} F. Hirzebruch, T. Berger, R. Jung. Manifolds and modular forms.  Aspects of Mathematics, E20. Friedr. Vieweg \& Sohn, Braunschweig, 1992

\bibitem[H\"o91]{Hohn} 
G. H\"ohn.
Komplexe elliptische Geschlechter und $S^1$-aequivariante Kobordismustheorie. 
    Thesis 1991. arXiv:math/0405232 

\bibitem[Kem76]{Kem} 
G. R. Kempf.
Linear systems on homogeneous spaces.
Ann. of Math. (2) 103 (1976), no. 3, 557--591

\bibitem[Ko17]{konno}
H. Konno. 
Elliptic weight functions and elliptic q-KZ equation.
Journal of Integrable Systems, Vol.~2, Issue 1, 2017

\bibitem[KuSh14]{KS} 
S. Kumar, K. Schwede.
Richardson varieties have Kawamata log terminal singularities.
IMRN 2014, no. 3, 842--864

\bibitem[KR03]{KR}
I. Rosu. 
Equivariant K-theory and Equivariant Cohomology (with appendix by A. Knutson and I. Rosu). 
Math. Z., Vol. 243, Issue 3, pp 423--448 (2003)

\bibitem[Kr90]{Krichever} 
I. M. Krichever. 
Generalized elliptic genera and Baker-Akhiezer functions. Mat. Zametki 47 (1990), no. 2, 34-45, 158; translation in Math. Notes 47 (1990), no. 1-2, 132-142 

\bibitem[La88]{Landweber}
P. S. Landweber.
\newblock Elliptic genera: an introductory overview.
\newblock In {\em Elliptic curves and modular forms in algebraic topology
  ({P}rinceton, {NJ}, 1986)}, volume 1326 of {\em Lecture Notes in Math.},
  pages 1--10. Springer, Berlin, 1988

\bibitem[LZ15]{LZ}
C. Lenart, K. Zainoulline. 
A Schubert basis in equivariant elliptic cohomology. New York Journal of Mathematics. 23. 2015

\bibitem[Li18]{Li} 
A. Libgober.
Elliptic genus of singular algebraic varieties and quotients.
J. Phys. A 51 (2018), no. 7, 073001, 33 pp. 

\bibitem[Lu85]{Lusztig} 
G. Lusztig. 
Equivariant K-theory and representations of Hecke algebras. 
Proc. AMS, 94(2):337--342, 1985

\bibitem[MaOk12]{MO}
D. Maulik, A. Okounkov. 
{Quantum Groups and Quantum Cohomology},
Ast\'erisque 408, Soc. Math. de France 2019

\bibitem[OSWW17]{OSWW}  
G. Occhetta, L. E. Sol{\'a} Conde, K. Watanabe, J. A. Wi{\'s}niewski.
Fano manifolds whose elementary contractions are smooth P1-fibrations: a geometric characterization of flag varieties. 
Ann. Sc. Norm. Super. Pisa Cl. Sci. (5) 17 (2017), no. 2, 573--607

\bibitem[Oc87]{Ochanine} 
S. Ochanine. Sur les genres multiplicatifs d\'efinis par des int\'egrales elliptiques. Topology 26 (1987), 143-151

\bibitem[Oh06]{Oh} 
T. Ohmoto. 
Equivariant Chern classes of singular algebraic varieties with group actions.  
Math. Proc. Cambridge Phil. Soc. 140 (2006), 115--134

\bibitem[Oko17]{O} 
A. Okounkov.
Lectures on K-theoretic computations in enumerative geometry. 
Geometry of moduli spaces and representation theory, 251--380, IAS/Park City Math. Ser., 24, AMS 2017

\bibitem[Ram85]{Ram1} 
A. Ramanathan.
Schubert varieties are arithmetically Cohen-Macaulay.
Invent. math. 80, 283--294 (1985)

\bibitem[Ram87]{Ram2} 
A. Ramanathan.
Equations defining Schubert varieties and Frobenius splittings of diagonals.
Publ. math. de l\'{}I.H.\'E.S., tome 65 (1987), p. 61--90

\bibitem[RSVZ]{Sm2} 
R. Rim\'anyi, A. Smirnov, A. Varchenko, Z. Zhou.
Three dimensional mirror self-symmetry of the cotangent bundle of the full flag variety. SIGMA 15 (2019), 093, 22 pages

\bibitem[RTV17]{RTV}
R. Rim{\'a}nyi, V. Tarasov, A. Varchenko.
Elliptic and K-theoretic stable envelopes and Newton polytopes,
Selecta Math. (2019) 25:16, 43pp

\bibitem[SchY07]{SchYo} 
J. Sch\"urmann, Sh. Yokura. 
A survey of characteristic classes of singular spaces,  
Singularity theory, 865--952, World Sci. Publ., Hackensack, NJ, 2007. 

\bibitem[SZZ17]{SZZ}
Ch. Su, G. Zhao, Ch. Zhong. 
On the K-theory stable bases of the Springer resolution.  
arXiv:\-1708.08013


\bibitem[TV97]{TV}
V. Tarasov, A. Varchenko. 
Geometry of q-hypergeometric functions as a bridge between Yangians and quantum affine algebras. 
Invent. Math. 128 (1997), no. 3, 501--588

\bibitem[To00]{Totaro}
B. Totaro.
\newblock Chern numbers for singular varieties and elliptic homology.
\newblock {Ann. of Math. (2)}, 151(2):757--791, 2000.

\bibitem[Wae08]{Wae}
R. Waelder.
Equivariant elliptic genera and local {M}c{K}ay correspondences.
Asian J. Math., 12(2):251--284, 2008.

\bibitem[We12]{WeCSM} 
A. Weber.  
Equivariant Chern classes and localization theorem.
Journal of Singularities, Vol. 5 (2012), 153--176

\bibitem[We98]{We} 
H. Weber,
Lehrbuch der Algebra. Zweite Auflage. Dritter Band: Elliptische Funktionen und algebraische Zahlen.
Braunschweig: F. Vieweg Sohn. (1898, 1908).

\bibitem[Wi88]{Witten}
E. Witten. The index of the Dirac operator in loop space. Elliptic curves and modular forms in algebraic topology (Princeton, NJ, 1986), 161--181, LNM 1326, Springer 1988
\end{thebibliography}
\end{document}